\newtheorem{theorem}{Theorem}
\newtheorem{proposition}{Proposition} 
\newtheorem{remark}{Remark} 
\newtheorem{definition}{Definition}
\newtheorem{main}{Theorem} 
\begin{document}

\title[Reversible vector fields of type (2;1)]{Phase portraits of (2;1) reversible vector fields of low codimension}

\author[C. Buzzi, J. Llibre and P. Santana]
{Claudio Buzzi$^1$, Jaume Llibre$^2$ and Paulo Santana$^1$}

\address{$^1$ IBILCE--UNESP, CEP 15054--000, S. J. Rio Preto, S\~ao Paulo, Brazil}
\email{claudio.buzzi@unesp.br; paulo.santana@unesp.br}

\address{$^2$ Universitat Aut\`onoma de Barcelona, 08193 Bellaterra, Barcelona, Spain}
\email{jaume.llibre@uab.cat}

\subjclass[2020]{34C23, 34A34, 37G10}

\keywords{Reversibility; phase portrait; reversible vector fields; codimension one; codimension two}

\maketitle

\begin{abstract}
	In this paper we study the phase portraits and bifurcation diagram of the symmetric singularities of codimensions zero, one and two of planar reversible vector fields having a line of reversibility.
\end{abstract}

\section{Introduction and Main Results}\label{sec1}

Given two real $C^k$, $k\geqslant1$, functions of two variables $P$, $Q\colon\mathbb{R}^2\to\mathbb{R}$ we associate the planar $C^k$-differential system given by
\begin{equation}\label{01}
	\dot x=P(x,y), \quad \dot y=Q(x,y),
\end{equation}
where the dot denotes the derivative with respect to the independent variable $t$. By abuse of notation we also call the map $X=(P,Q)$ a planar \emph{vector field}. If $P$ and $Q$ are polynomials, then system \eqref{01} is a \emph{planar polynomial differential system}. In this case we say that system \eqref{01} has \emph{degree n} if the maximum of the degrees of $P$ and $Q$ is $n$. If $n=1$, then system \eqref{01} is called a \emph{linear differential system}. This last class of systems is already completely understood. See~\cite[Chapter $1$]{Perko}. 

However for $n\geqslant2$ we know very few things. This class is too wise and thus it is common to study more specific subclasses and to classify their topological phase portraits. For $n=2$ for example there is a great effort of Artes, Llibre and coauthors~\cites{ArtLli1994,ArtLli1994x2,Cod0,Cod1,Cod2x1,Cod2x2,Cod2x3,Art1,Art2,CaiLli,LliVal2018,Llibre2020,LliRic} to classify as many quadratic vector fields as possible.

In this paper we are concerned with the classification of the \emph{reversible vector fields}. Let $X$ be a $C^k$-vector field (not necessarily planar) and $\varphi\colon\mathbb{R}^m\to\mathbb{R}^m$ a $C^k$-diffeomorphism such that $\varphi^2= \operatorname{Id}_{\mathbb{R}^m}$ (also known as \emph{involution}). Consider also 
	\[\operatorname{Fix}(\varphi)=\{z\in\mathbb{R}^m:\varphi(z)=z\}.\]
We say that $X$ is $\varphi$-reversible of type $(m;s)$ if $\operatorname{Fix}(\varphi)$ is a sub-manifold of $\mathbb{R}^m$ of dimension $s$ and 
	\[D\varphi(z)X(z)=-X(\varphi(z)), \quad \forall \in\mathbb{R}^m,\]
for all $z\in\mathbb{R}^m$. Many reversible vector fields have been studied for several authors. For example in \cite{Buzzi} all the low codimension singularities of systems $(2;0)$-type are classified, with their phase portraits studied at \cite{BuzLliSan}. In \cite{MedTei1998} the low codimension singularities of system of $(3;2)$-type are obtained. In \cites{WeberPessoa2010,WeberPessoa2012} there is a study of the quadratic reversible vector fields of type $(3;2)$ on the sphere $\mathbb{S}^2$. 

In this paper we study the phase portraits of the germs of $C^k$-reversible vector fields of type $(2;1)$. We recall that given a vector field $X$, its \emph{germ} is the equivalence class $[X]$ given by the relation $\sim$ defined by $X\sim Y$ if, and only if, there is a neighborhood $U$ of the origin such that $X$ and $Y$ coincide in $U$. By abuse of notation we denote the germ of $X$ also by $X$.

Let $\mathbb{R}^2,0$ denote plane coordinates in a neighborhood of the origin. Given a $\varphi$-reversible vector field of type $(2;1)$, it is well known that we can choose a coordinate system at $\mathbb{R}^2,0$ such that $\varphi(x,y)=(x,-y)$, see Appendix~\ref{Linearization}. 

Therefore let $\mathfrak{X}$ denote the set of germs of reversible vector fields of type $(2;1)$ at $\mathbb{R}^2,0$ endowed with the $C^k$-topology and observe that given $X\in\mathfrak{X}$, it is no loss of generality to assume that $X$ is $\varphi$-reversible with $\varphi(x,y)=(x,-y)$.

\begin{definition}[Topological equivalence]
	Two germs of vector fields $X$, $Y\in\mathfrak{X}$ are \emph{topologically equivalent} if there are two neighborhoods $U$, $V$ of the origin and a homeomorphism $h\colon U\to V$ which sends orbits of $X$ to orbits of $Y$ preserving or reversing the orientation of all orbits. The homeomorphism $h$ is called a \emph{topological equivalence} betweeen $X$ and $Y$. Moreover we say that $X\in\mathfrak{X}$ is \emph{structurally stable} if there is a neighborhood $N\subset\mathfrak{X}$ of $X$ such that $X$ is topologically equivalent to every $Y\in N$.
\end{definition}

Let $\Sigma_0\subset\mathfrak{X}$ be the family of structurally stable germs and consider the codimension one bifurcation set $\mathfrak{X}_1=\mathfrak{X}\setminus\Sigma_0$, endowed with the subspace topology induced by $\mathfrak{X}$. Let $\Sigma_1\subset\mathfrak{X}_1$ be the family of structurally stable germs relatively to $\mathfrak{X}_1$ and consider the codimension two bifurcation set $\mathfrak{X}_2=\mathfrak{X}_1\setminus\Sigma_1$. Finally, let $\Sigma_2\subset\mathfrak{X}_2$ be the family of structurally stable germs relatively to $\mathfrak{X}_2$.

\begin{definition}[$k$-parameter families]\label{Def2}
	Given $k\in\{1,2\}$ let $\Theta_k$ be the space of $C^1$-mappings $\xi:\mathbb{R}^k\to\mathfrak{X}$ endowed with the $C^1$-topology. Its elements $\xi$ are called \emph{$k$-parameter families} of germs of vector fields of $\mathfrak{X}$. Given $\xi\in\Theta_k$ and $\lambda\in\mathbb{R}^k$, we say that the germ of vector field $\xi(\lambda)\in\mathfrak{X}$ is an \emph{element} of the family $\xi$.
\end{definition}

In the following theorem Teixeira~\cite{Teixeira} provided a characterization of the normal forms of the elements of $\Sigma_0$ and also for dense sets of $\Phi_1$ and $\Phi_2$.

\begin{theorem}[Teixeira~\cite{Teixeira}]\label{T1}
	The following statements hold.
	\begin{enumerate}[label=(\alph*)]
		\item (Codimension zero classification) If $X\in\Sigma_0$, then it is topologically equiva\-lent to one of the following germs of vector fields.
		\begin{enumerate}[label=\arabic*.]
			\item $X_{01}=\left(0,\frac{1}{2}\right)$.
			\item $X_{02}=\left(y,\delta x\right)$, $\delta\in\{-1,1\}$.
		\end{enumerate}
		\item (Codimension one classification) In $\Theta_1$ a dense set if formed by the families whose each element is topologically equivalent to one of the elements of the following families.
		\begin{enumerate}[label=\arabic*.]
			\item The normal forms of item $(a)$.
			\item $X_{11}=\bigl(y,\frac{1}{2}\left(\lambda+x^2\right)\bigr)$.
			\item $X_{12}=\bigl(\delta x y, \frac{1}{2}\bigl(2\delta y^2+x+\lambda\bigr)\bigr)$, $\delta\in\{-1,1\}$.
			\item $X_{13}=\bigl(xy,\frac{1}{2}\left(-y^2+x+\lambda\right)\bigr)$.
			\item $X_{14}=\bigl(xy+y^3,\frac{1}{2}\left(-x+y^2+\lambda\right)\bigr)$.
		\end{enumerate}
		\item (Codimension two classification) In $\Theta_2$ a dense set if formed by the families whose each element is topologically equivalent to one of the elements of the following families.
		\begin{enumerate}[label=\arabic*.]
			\item All the normal forms listed item $(b)$.
			\item $X_{21}=\bigl(y,\frac{1}{2}\bigl(bx^3+\beta x + \alpha\bigr)\bigr)$, $b\in\{-1,1\}$.
			\item \begin{enumerate}[label=(\roman*)]
				\item $X_{22a}=\bigl(ay(x-y^2)+\beta y(x+y^2),\frac{1}{2}(\alpha+(x+y^2)^2)\bigr)$, $a\in\{-1,1\}$.
				\item $X_{22b}=\bigl(y(x-y^2)+\beta y(x+y^2),\frac{1}{2}(\alpha+a(x+y^2)^2)\bigr)$, $a\in\{-1,1\}$.
			\end{enumerate}
			\item $X_{23}=\bigl(-y^3+axy(\alpha+x^2+y^4),\frac{1}{2}(ax-y^2(\alpha+x^2+y^4)+\beta)\bigr)$, $a\in\{-1,1\}$.
			\item $X_{24}=\bigl(axy+\alpha y^3,\frac{1}{2}(x+ay^2+\beta)\bigr)$, $a\in\{-1,1\}$.
			\item \begin{enumerate}[label=(\roman*)]
				\item $X_{25a}=\bigl(xy,\frac{1}{2}(\alpha x-y^2+ax^2+\beta)\bigr)$, $a\in\{-1,1\}$.
				\item $X_{25b}=\bigl(axy,\frac{1}{2}(\alpha x+by^2+\delta x^2+\beta)\bigr)$, $ab>0$, $\delta\in\{-3,3\}$ and $\bigl\{(a\in\{-1,1\},\;b\in\{-3,3\}) \text{ or } (a\in\{-3,3\},\;b\in\{-1,1\})\bigr\}$.
			\end{enumerate}
		\end{enumerate}
	\end{enumerate}
\end{theorem}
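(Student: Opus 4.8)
The plan is to exploit the reversibility constraint to turn the classification into an equivariant singularity-theory problem. First I would translate $D\varphi(z)X(z)=-X(\varphi(z))$ with $\varphi(x,y)=(x,-y)$ into the parity conditions $P(x,-y)=-P(x,y)$ and $Q(x,-y)=Q(x,y)$; that is, writing $X=(P,Q)$, the component $P$ is odd in $y$ and $Q$ is even in $y$. In particular $P(x,0)\equiv0$, so along the symmetry line $y=0$ the field is vertical, $(0,Q(x,0))$, and every singular point of $X$ on $y=0$ is characterized by $Q(x,0)=0$. This already organizes the problem: the origin is either a regular point (when $Q(0,0)\neq0$) or a \emph{symmetric singularity} (when $Q(0,0)=0$), and the whole statement becomes a stratification of the space of admissible jets at such a symmetric singularity.

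For part $(a)$ I would first record that the admissible linear part at a symmetric singularity is forced to be $\left(\begin{smallmatrix} 0 & b \\ c & 0\end{smallmatrix}\right)$ by the infinitesimal relation $D\varphi\,DX(0)=-DX(0)\,D\varphi$, with eigenvalues $\pm\sqrt{bc}$; moreover reversibility prevents spiralling, so the imaginary case is a genuine center rather than a focus. The open-dense stratum $\Sigma_0$ then splits into three topological types: the regular point $X_{01}$, the reversible saddle ($bc>0$) and the reversible center ($bc<0$), the latter two realized by $X_{02}$ with $\delta=\pm1$. Structural stability of these is immediate because the defining conditions ($Q(0,0)\neq0$, respectively $bc\neq0$) are open within $\mathfrak{X}$, and conversely any germ failing them can be carried to a different topological type by an arbitrarily small reversible perturbation; this yields that $\Sigma_0$ consists exactly of the three listed models.

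For parts $(b)$ and $(c)$ the approach is the standard transversality and versal-unfolding machinery, carried out equivariantly with respect to the $\mathbb{Z}_2$-action generated by $\varphi$. I would use finite determinacy (via an equivariant Malgrange preparation argument) to replace each germ by a polynomial normal form obtained through reversible changes of coordinates and time rescalings, and then stratify the reversible jet space by the codimension of the singularity: $\mathfrak{X}_1$ is where exactly one generic condition degenerates (the nilpotent cases $bc=0$, refined by the first non-vanishing coefficients of the appropriate parity), and $\mathfrak{X}_2$ where two degenerate simultaneously. The content of the density statements is that a generic $k$-parameter family in $\Theta_k$ meets these strata transversally, and along such a crossing the induced unfolding is equivalent to one of the listed versal models $X_{11},\dots,X_{14}$ (for $k=1$) or $X_{21},\dots,X_{25}$ (for $k=2$). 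For each model I would verify versality by checking that its unfolding parameters generate a complement to the tangent space of the orbit of the germ under the reversible equivalence group, and that the listed discrete invariants ($\delta$, $a$, $b$) exhaust the distinct topological types while $\alpha$, $\beta$, $\lambda$ play the role of unfolding parameters; exhaustiveness follows by enumerating the finitely many ways the genericity conditions can fail at the relevant order.

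The main obstacle I expect is the codimension-two enumeration in part $(c)$: the degeneracies combine a nilpotent linear part with higher-order terms of both parities, and the normal forms $X_{22}$–$X_{25}$ carry cubic nonlinearities and several discrete sign invariants, so verifying that each arises from a genuinely transverse two-parameter crossing and produces a topologically versal unfolding — while proving that no case is omitted — is delicate. The reversible symmetry is what makes this tractable: it halves the effective dimension by letting one reconstruct the full phase portrait from the dynamics on the half-plane $y\geqslant0$ together with the reflection across $y=0$, which is ultimately what renders the list finite and each normal form topologically rigid relative to its stratum.
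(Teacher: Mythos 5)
First, a point of order: the paper does not prove this statement at all. Theorem~\ref{T1} is quoted from Teixeira~\cite{Teixeira} and used as a black box; the paper's own contribution (Theorems~\ref{Main} and~\ref{Main2}) begins downstream of it, by computing the bifurcation diagrams and phase portraits of the listed normal forms. So your attempt can only be measured against Teixeira's original argument, not against anything internal to this paper. That said, your treatment of part $(a)$ is essentially sound: the parity conditions ($P$ odd and $Q$ even in $y$), the anticommutation relation forcing the linear part at a symmetric singularity to have zero diagonal, the saddle/center dichotomy according to the sign of $bc$, the fact that reversibility excludes foci, and the instability of the degenerate germs (perturb to produce both a saddle and a center, which are inequivalent) are exactly the right ingredients, and the remaining details (flow box in the regular case, persistence of the symmetric singularity via the implicit function theorem applied to $Q(\cdot,0)$) are routine.

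For parts $(b)$ and $(c)$, however, what you wrote is a program, not a proof, and the gap sits exactly where the content of the theorem lies. You never stratify the degeneracies, never derive a single one of the polynomial models, never verify that any of them is a versal unfolding, and never prove the density statement; ``enumerating the finitely many ways the genericity conditions can fail at the relevant order'' \emph{is} the proof, not a step that can be deferred. Concretely, nothing in your outline could produce the splitting of $X_{22}$ into the subcases $(i)$/$(ii)$, the degree-six structure of $X_{23}$ built on $\alpha+x^2+y^4$, or the coefficient constraints of $X_{25b}$ ($ab>0$, $\delta\in\{-3,3\}$, and the mixed conditions on $a$ and $b$): these record specific degeneracy and resonance conditions that must be computed, not guessed. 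There is also a methodological flaw: your versality criterion (unfolding parameters spanning a complement of the tangent space to the orbit of the equivalence group) is an infinitesimal, smooth-category criterion, while the theorem asserts \emph{topological} equivalence of the elements of families; for unfoldings of degenerate planar vector field singularities no such algebraic criterion is available, and one must actually carry out the bifurcation analysis of each model. This is in fact how Teixeira proceeds: the contact of $X$ with $\operatorname{Fix}(\varphi)$ is measured by the Lie derivatives $X^kh$ of $h(x,y)=y$ (the characterization this paper recalls in Definition~\ref{Def4}), the symmetry is exploited through the substitution $u=y^2$ reducing the half-plane dynamics to a non-symmetric field of lower degree (the same device this paper uses for $X_{23}$ in Section~\ref{sec5}), and the resulting cases are then classified one by one. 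Your plan correctly identifies the symmetry reduction as the key structural tool, but without executing the enumeration it does not constitute a proof of $(b)$ or $(c)$.
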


The families $X_{ij}$ that appears at Theorem~\ref{T1}$(b)$ and $(c)$ are the topological normal forms that characterizes all possible ways to cross $\Sigma_1$ and $\Sigma_2$ transversely in the space $\mathfrak{X}$ of all germs. In particular the dense set of families stated at Theorem~\ref{T1} are those characterized by the fact that whenever it crosses $\mathfrak{X}_k$, then it does transversely and at structurally stable elements.

We note that such families are polynomial and thus we are motivated to obtain the bifurcation diagrams and the phase portraits of such vector fields in order to complement the results of Teixeira~\cite{Teixeira}. Therefore our first main result is the following.

\begin{main}\label{Main}
	The bifurcation diagram and the phase portraits in the Poincar\'e disk of the vector fields in Theorem~\ref{T1} are given in Figures~\ref{cod0and1}-\ref{23b2}.
\end{main}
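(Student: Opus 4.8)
The plan is to treat each normal form $X_{ij}$ appearing in Theorem~\ref{T1} as an honest planar polynomial vector field and to determine its global phase portrait on the Poincar\'e disk for every value of the parameters $\lambda,\alpha,\beta$, recording how the portrait changes as the parameters sweep the bifurcation set. Throughout I would exploit the defining reversibility relation: since $\varphi(x,y)=(x,-y)$, the condition $D\varphi\,X=-X\circ\varphi$ forces the first component $P$ to be odd in $y$ and the second component $Q$ to be even in $y$. Consequently $P(x,0)\equiv 0$, so the $x$-axis $\operatorname{Fix}(\varphi)$ is crossed orthogonally by the flow except at the symmetric equilibria (where $Q(x,0)=0$ as well), and the whole phase portrait is symmetric with respect to reflection in the $x$-axis, this reflection reversing time along orbits. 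This symmetry halves the work: it suffices to understand the closed upper half-disk and reflect.

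For the local analysis I would, for each fixed parameter value, first locate the finite singularities and classify those with nondegenerate linear part by the Hartman--Grobman theorem. The symmetric singularities on the $x$-axis are, by construction of the normal forms, exactly the degenerate ones: on the bifurcation strata their linear parts are nilpotent or identically zero, which is precisely what makes them codimension one or two in $\mathfrak{X}$. For these I would use directional and quasi-homogeneous blow-ups to desingularize, reading off the local sectorial decomposition (hyperbolic, parabolic, elliptic sectors) after blow-up and checking the resulting picture against the imposed $y$-symmetry. The behaviour at infinity would be obtained from the Poincar\'e compactification of each $X_{ij}$, classifying the singularities on the equator of the Poincar\'e disk by the same linearization-or-blow-up dichotomy.

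With all local data in hand, the global portrait in each parameter region would be assembled using the Poincar\'e--Hopf index relation, the reversibility symmetry, and the separatrix configuration. Here reversibility plays a second decisive role: a symmetric equilibrium whose linearization is a focus or a center is necessarily a center, since the time-reversing symmetry $\varphi$ would otherwise map a spiralling orbit to itself with reversed orientation, which is impossible for a genuine focus. More generally, any periodic orbit meeting $\operatorname{Fix}(\varphi)$ is non-isolated, so the recurrent dynamics consist of centers and period annuli rather than limit cycles, and the global connections are then fixed by the separatrices alone. I would close each region by determining its separatrix skeleton and verifying consistency of the indices, then organize the parameter space of each family into the regions cut out by its bifurcation variety, check that distinct regions yield topologically inequivalent germs while adjacent ones are related by the expected local bifurcation, and finally match each region and each stratum to the corresponding figure.

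The main obstacle is the desingularization of the highly degenerate symmetric equilibria appearing in the codimension-two families — for instance the origin of $X_{22a}$, $X_{22b}$ and $X_{23}$, where the principal part mixes $x$ and $y^2$ and the weighted structure suggests a quasi-homogeneous rather than a single directional blow-up, and the origin of $X_{25b}$, whose anisotropic scaling raises the same issue. Choosing weights under which the blow-up is genuinely resolving, and then patching the sectors across chart boundaries in a way compatible with the $y$-symmetry, is the delicate step; the compactification at infinity of these same families presents the identical difficulty in a second location. Once each such singularity is resolved, the remaining bookkeeping — index counting, exclusion of isolated periodic orbits, and the parameter-by-parameter comparison with the figures — is routine.
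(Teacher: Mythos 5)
Your overall skeleton --- local classification of finite singularities by linearization or quasi-homogeneous blow-up, Poincar\'e compactification at infinity, reversibility forcing symmetric singularities with elliptic linear part to be centers, index bookkeeping, then assembly of the separatrix configuration --- is the same backbone the paper lays out in Section~\ref{sec2}. But there are two genuine gaps. The first is your exclusion of limit cycles: reversibility only shows that a periodic orbit \emph{meeting} $\operatorname{Fix}(\varphi)$ is non-isolated, and it says nothing about cycles avoiding the $x$-axis. The normal forms do have singularities of node or focus type off the axis (for instance $q^{\pm}=\bigl(0,\pm\sqrt{-\lambda/2}\bigr)$ for $X_{12}$ with $\lambda<0$, and the pairs of singularities of the reduced cubic system $Y$ associated with $X_{23}$), around which such a cycle could a priori exist. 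The paper must rule these out by family-specific arguments: invariance of the $y$-axis together with the index fact that a cycle cannot surround only a saddle for $X_{12}$, the fact that every finite singularity is symmetric for $X_{21}$, and the Bendixson criterion for $X_{22a}$. Your blanket claim that the recurrent dynamics consist of centers and period annuli is false as a general principle, so this step cannot be waved through as routine.

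The second and more serious gap is that your plan has no mechanism for locating the \emph{global} bifurcations, i.e.\ the parameter curves along which saddle separatrices connect. The bifurcation diagrams are not cut out by local bifurcation varieties alone. For $X_{21}$ one must prove that the saddle--saddle connection occurs exactly at $\alpha=0$; the paper does this with Perko's displacement map, showing $\partial d/\partial\alpha>0$ along the connection, a genuinely non-local analytic argument. For $X_{22a}$ with $a=-1$ one must locate the connection curve $\beta_1(\alpha)$ relative to $\beta_h(\alpha)=1+2\sqrt{-\alpha}$ (Figure~\ref{22b}); the paper can only settle this asymptotically by analytic means (via auxiliary transversal segments and a secondary blow-up), the full relative position resting on numerics and being explicitly left as an open problem, as is the possible intersection of the curves labelled $28$ and $30$ for $X_{23}$ in Figure~\ref{23a1}. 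Local data plus Poincar\'e--Hopf cannot distinguish the portraits on either side of such a connection curve, so ``organize the parameter space into the regions cut out by its bifurcation variety'' presupposes exactly what has to be proved. Relatedly, you identify the desingularization of degenerate symmetric equilibria as the main obstacle; in the paper those blow-ups are the comparatively straightforward part, and the separatrix-connection analysis you dismiss as bookkeeping is where most of the effort, and the only unresolved difficulties, actually lie.
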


We observe that the bifurcation diagram and phase portraits presented at Theorem~\ref{Main} are \emph{global} and defined for all values of $\lambda\in\mathbb{R}$ and $(\alpha,\beta)\in\mathbb{R}^2$.

\begin{definition}
	Let $X$ be a $\varphi$-reversible vector field of type $(m;s)$. We say that $p$ is a symmetric singularity of $X$ if $\varphi(p)=p$.
\end{definition}

It follows from \cite{Teixeira} that the families $X_{ij}$ that appear at Theorem~\ref{T1}$(b)$ and $(c)$ are also the topological normal forms of the symmetric singularities of codimension one and two.

Therefore we observe that if we restrict the phase portraits of Theorem~\ref{Main} to a neighborhood of the origin and for $\lambda\approx0$ or $(\alpha,\beta)\approx(0,0)$, then we have a characterization of the local phase portraits of the symmetric singularities of low codimension of reversible vector fields of type $(2;1)$. Therefore our second main result is the following.

\begin{main}\label{Main2}
	A symmetric singularity of codimension zero, one or two of a reversible vector field of type $(2;1)$ is topologically equivalent to one of the phase portraits given in Figures~\ref{cod0and1}-\ref{23b2}, at the origin and for $\lambda\approx0$ or $(\alpha,\beta)\approx(0,0)$.
\end{main}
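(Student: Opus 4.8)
The plan is to obtain Theorem~\ref{Main2} as a direct consequence of Teixeira's classification (Theorem~\ref{T1}) combined with Theorem~\ref{Main}, exploiting the fact that the notion of germ is intrinsically local. First I would recall, as stated in the observation preceding Theorem~\ref{Main2}, that the families $X_{ij}$ appearing in Theorem~\ref{T1}$(b)$ and $(c)$ are precisely the topological normal forms of the symmetric singularities of codimensions one and two, while the germs of Theorem~\ref{T1}$(a)$ play the same role in codimension zero. Hence, given an arbitrary symmetric singularity $p$ of a reversible vector field $X\in\mathfrak{X}$ of codimension zero, one or two, after the coordinate change placing $\varphi(x,y)=(x,-y)$ and $p$ at the origin, the germ of $X$ at the origin is topologically equivalent to the germ at the origin of one of the normal forms $X_{ij}$ evaluated at the corresponding critical parameter value, namely $\lambda\approx 0$ or $(\alpha,\beta)\approx(0,0)$.

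Second, I would invoke Theorem~\ref{Main}, which supplies the complete global phase portraits in the Poincar\'e disk of every family $X_{ij}$ for every parameter value. The local phase portrait of the normal form $X_{ij}$ at the origin is then obtained by restricting the corresponding global phase portrait of Figures~\ref{cod0and1}--\ref{23b2} to a sufficiently small neighborhood of the origin, keeping the parameter near its critical value.

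The decisive step is to argue that topological equivalence of germs is compatible with this global-to-local restriction. Because a germ is by definition an equivalence class under coincidence on a neighborhood of the origin, and because a topological equivalence between elements of $\mathfrak{X}$ is realized by a homeomorphism defined on neighborhoods of the origin, any global topological equivalence between two members of a family $X_{ij}$ restricts to a local topological equivalence on a smaller neighborhood. Consequently the portion of each figure of Theorem~\ref{Main} lying in a neighborhood of the origin, for parameters near the critical value, is a faithful representative of the germ of the corresponding symmetric singularity, which is exactly the assertion of Theorem~\ref{Main2}.

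I expect the main obstacle to be a matter of careful bookkeeping rather than of new mathematics: one must verify, family by family, that the origin is indeed the symmetric singularity under consideration and that, for parameter values near the critical one, the local phase portrait read off from the global figure does not degenerate nor acquire spurious separatrices coming from the boundary of the Poincar\'e disk or from the other singularities of the polynomial vector field. Since all the families $X_{ij}$ are polynomial and their finite singularities are isolated, restricting to a small enough neighborhood of the origin isolates the symmetric singularity, so this verification reduces to inspecting the already-established Figures~\ref{cod0and1}--\ref{23b2}.
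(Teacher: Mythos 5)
Your proposal is correct and follows essentially the same route as the paper: the paper obtains Theorem~\ref{Main2} as an immediate consequence of Teixeira's characterization (that the germs of Theorem~\ref{T1}$(a)$ and the families $X_{ij}$ of Theorem~\ref{T1}$(b)$ and $(c)$ are the topological normal forms of symmetric singularities of codimension zero, one and two) combined with Theorem~\ref{Main}, restricting the global phase portraits of Figures~\ref{cod0and1}--\ref{23b2} to a neighborhood of the origin for $\lambda\approx0$ or $(\alpha,\beta)\approx(0,0)$. Your extra care about the compatibility of germ equivalence with restriction, and about the symmetric singularity being isolated, is implicit in the paper's argument and does not change the approach.
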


We observe that the phase portraits given in Figures~\ref{cod0and1}-\ref{23b2} does not represent the global phase portraits of all reversible vector fields of type $(2;1)$ of low codimension. In fact, unless we are given a maximum degree, a reversible vector field of type $(2;1)$ may have as many singularities as desired and thus there is infinitely many of such phase portraits. For a classification of all the phase portraits of the quadratic reversible vector fields of type $(2;1)$ we refer to~\cite{LlibreMedrado}.

\begin{remark}
	The phase portraits work as follows. The thicker lines represent the separatrices of the phase portrait and the thin lines represent generic orbits of the canonical regions (see Appendix~\ref{MN}). The bigger dots represent isolated singularities. The dotted line at some phase portraits of the bifurcation diagram of $X_{12}$ and $X_{24a}$ represents a line of singularities. Moreover if a phase portrait at the bifurcation diagram has no pointing lines, then it represents the open connected region in which it is inserted.
\end{remark}

\begin{remark}
	For simplicity we call the origin of the charts $U_2$, $V_2$, $U_1$ and $V_1$ of the Poincar\'e compactification (see Appendix~\ref{PC}) as \emph{north pole}, \emph{south pole}, \emph{east pole} and \emph{west pole}, respectively.
\end{remark}

The paper is organized as follows. In Section~\ref{sec2} we present the backbone of the approaching method. In Sections~\ref{sec3}-\ref{sec6} we study the main peculiarities of the families of codimension two. In the Appendix we have a brief survey on some technical results used in the proves of our results.

\section{Approaching Method}\label{sec2}

The backbone of our approach is first to study all the local phase portrait at every finite singularity and at the infinity (see Appendix~\ref{PC}), then to discover when it may exist some limit cycle and finally to study when it may happen some bifurcation as the saddle-node, the center-focus, Hopf bifurcation and etc. After this we study the $\alpha$ and $\omega$-limits of all the separatrices (see Appendix~\ref{MN}). To do this we use some convenient straight lines and curves to see at which direction the flow crosses it. 

Let us take system $X_{12}$ as an example. We remember it is given by 
	\[\dot x=\delta xy, \quad \dot y=\frac{1}{2}\left(2\delta y^2+x+\lambda\right),\]
with $\delta\in\{-1,1\}$. First we observe that if $\delta=-1$, then with the change of variables and parameter $(x,y;\lambda)\mapsto(-x_1,-y_1;-\lambda_1)$ we return to the case $\delta=1$ and thus both cases are equivalent. Therefore it is no loss of generality to assume $\delta=1$. Moreover we observe that the only possible finite singularities are given by 
	\[p=(-\lambda,0), \quad q^\pm=\pm\left(0,\sqrt{-\frac{\lambda}{2}}\right).\]
	
Therefore as anticipated in the begining of this section, our first step is to obtain the local phase portraits at the finite singularities of $X_{12}$.

\begin{proposition}\label{X12finite}
	The following statements hold.
	\begin{enumerate}[label=(\alph*)]
		\item $p$ is a hyperbolic saddle if $\lambda<0$ and a center if $\lambda>0$.
		\item $q^+$ (resp. $q^-$) is a unstable node (resp. stable node) if $\lambda<0$.
		\item The origin is the unique finite singularity if $\lambda=0$ and its phase portrait is given by Figure~\ref{12origin}.
		\item $X_{12}$ does not have limit cycles.
	\end{enumerate}
	\begin{figure}[ht]	
		\begin{center}
			\includegraphics[height=3cm]{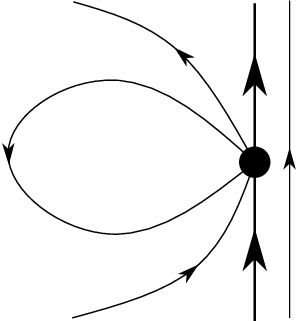}
		\end{center}	
		\caption{Local phase portrait of $X_{12}$ at the origin with $\lambda=0$.}\label{12origin}
	\end{figure}
\end{proposition}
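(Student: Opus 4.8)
The plan is to handle the four statements in turn, settling each finite singularity from its linear part whenever that part is hyperbolic, and invoking the reversibility of $X_{12}$ only where the situation is genuinely degenerate, namely the center in (a) and the nilpotent point in (c). Three facts will be used repeatedly: the line $x=0$ is invariant, since $\dot x = xy$ vanishes there; the divergence is
\[
\operatorname{div}X_{12}=\partial_x(xy)+\partial_y\bigl(\tfrac12(2y^2+x+\lambda)\bigr)=3y;
\]
and $X_{12}$ is reversible with respect to $\varphi(x,y)=(x,-y)$, so its phase portrait is symmetric across $\operatorname{Fix}(\varphi)=\{y=0\}$ with reversed orientation.

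For (a) and (b) I would compute the Jacobian
\[
DX_{12}(x,y)=\begin{pmatrix} y & x\\[2pt] \tfrac12 & 2y\end{pmatrix}.
\]
At $p=(-\lambda,0)$ it has trace $0$ and determinant $\lambda/2$, hence eigenvalues $\pm\sqrt{-\lambda/2}$: real of opposite sign when $\lambda<0$ (a hyperbolic saddle) and purely imaginary when $\lambda>0$. In the second case $p$ lies on $\operatorname{Fix}(\varphi)$, so it is a symmetric singularity whose linearization is a center; reversibility then upgrades this to a nonlinear center, because $\varphi$ conjugates the forward and backward flows and forces the orbits about $p$ to close. At $q^{\pm}=(0,\pm\sqrt{-\lambda/2})$ the matrix is lower triangular with eigenvalues $y_0$ and $2y_0$, $y_0=\pm\sqrt{-\lambda/2}$; these are real, distinct and of the same sign, giving an unstable node at $q^{+}$ and a stable node at $q^{-}$ when $\lambda<0$. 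This settles (a) and (b).

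For (c), at $\lambda=0$ the system $xy=0$, $y^2+x/2=0$ has only the origin as solution, so the three singularities have coalesced there, and the linear part $\left(\begin{smallmatrix}0&0\\ \tfrac12&0\end{smallmatrix}\right)$ is nilpotent. I would pass to the companion form by setting $\xi=y$, $\eta=\dot y=y^2+x/2$ (a local diffeomorphism, its Jacobian at the origin being invertible), obtaining $\dot\xi=\eta$, $\dot\eta=-\xi^3+3\xi\eta$, in which $\varphi$ becomes $(\xi,\eta)\mapsto(-\xi,\eta)$ and the invariant line $x=0$ becomes the invariant parabola $\eta=\xi^2$. The orbits lying on this parabola tend to the origin, so the origin is \emph{non-monodromic} and therefore decomposes into finitely many hyperbolic, parabolic and elliptic sectors. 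I would identify these via the classical nilpotent classification (Andreev's theorem) applied to the leading term $\dot\eta\approx-\xi^3$ together with the cross term $3\xi\eta$, or equivalently by a quasihomogeneous blow-up of type $(1,2)$ of the original field, whose invariant line $x=0$ simplifies the desingularization. The arrangement is then pinned down by two independent constraints: reversibility, which forces the portrait to be symmetric across $\{y=0\}$ and forbids any spiralling, and an index count — for $\lambda<0$ the origin splits into a saddle (index $-1$) and two nodes (each index $+1$), and for $\lambda>0$ into a single center (index $+1$), so the origin has index $+1$, i.e. equal numbers of elliptic and hyperbolic sectors. Reading these against the sector description of Appendix~\ref{MN} yields Figure~\ref{12origin}.

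Finally, for (d) I would exclude limit cycles for every $\lambda$ simultaneously. Since $x=0$ is invariant, any periodic orbit lies in $x>0$ or $x<0$; and since $\operatorname{div}X_{12}=3y$ has constant sign on each open half-plane $y>0$ and $y<0$, Bendixson's criterion forbids a periodic orbit from being contained in either, so every periodic orbit must meet $\{y=0\}$. At a crossing point $(x_0,0)$ the field equals $\bigl(0,\tfrac12(x_0+\lambda)\bigr)$, which is transversal (vertical) to the axis unless $x_0=-\lambda$, where it would have to pass through the singularity $p$ — impossible. Applying $\varphi$ sends such a periodic orbit to an orbit through the same point $(x_0,0)$, so by uniqueness the orbit is $\varphi$-symmetric, and the standard reversible-systems argument (nearby points of $\{y=0\}$ generate a one-parameter family of symmetric closed orbits) shows it is non-isolated; hence it is not a limit cycle. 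The main obstacle is part (c): neither the linearization nor a Bendixson-type estimate resolves the nilpotent origin, so a full degenerate analysis is unavoidable, and although reversibility, the invariant parabola and the index $+1$ drastically narrow the possibilities, correctly reading off the exact sector arrangement of Figure~\ref{12origin} is the delicate step.
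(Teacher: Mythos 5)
Parts (a) and (b) of your proposal coincide with the paper's own proof: linearization at $p$ and $q^{\pm}$, with reversibility used to promote the linear center at $p$ (for $\lambda>0$) to a genuine center, exactly as the paper does by citing that a symmetric singularity cannot be a focus. Part (d) takes a genuinely different route: the paper argues from the invariance of the $y$-axis plus the index obstruction (a limit cycle cannot surround only a saddle), whereas you use Bendixson's criterion on each half-plane $\{y>0\}$, $\{y<0\}$ (divergence $3y$) to force any periodic orbit to cross the symmetry axis transversally, and then the standard fact that a periodic orbit meeting $\operatorname{Fix}(\varphi)$ is symmetric and lies in a band of closed orbits, hence is not isolated. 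Your version is correct and, if anything, covers the case $\lambda>0$ (where $p$ is a center of index $+1$, so the index obstruction alone is silent) more explicitly than the paper's one-line argument.

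The gap is in part (c), which is the heart of the proposition. Your reduction to companion form is correct ($\xi=y$, $\eta=y^2+x/2$ gives $\dot\xi=\eta$, $\dot\eta=-\xi^3+3\xi\eta$, with the involution becoming $(\xi,\eta)\mapsto(-\xi,\eta)$), and Andreev's theorem (Theorem 3.5 of \cite{DumLliArt2006}) applied to it does settle the matter: here $k=3$ is odd, $a=-1<0$, $n=1$ is odd, $k=2n+1$ and $b^2+4a(n+1)=9-8=1\geqslant0$, so the origin consists of exactly one hyperbolic and one elliptic sector. But you never extract this conclusion; instead you claim the arrangement is ``pinned down'' by reversibility plus the index count, and that is not enough: index $+1$ together with symmetry and non-monodromy is also compatible with $e=h=0$ (a node, which needs a separate reversibility argument, or the invariant axis, to be excluded) and with $e=h=2$, and in any case it does not locate the sectors. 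So the soft constraints you list narrow the possibilities but do not determine Figure~\ref{12origin}; either the Andreev case analysis or the blow-up must actually be carried out, which is what the paper does (weights $(2,1)$, giving two saddles at $\theta=\pm\frac{\pi}{2}$ and two nodes at $\theta^{\pm}$ on the exceptional circle, whence one hyperbolic sector in $x>0$, one elliptic sector around the negative $x$-axis, and two parabolic sectors in between). Two smaller points: the correct weights for the original field are $(2,1)$, not $(1,2)$ (the latter are the right weights for your $(\xi,\eta)$ coordinates); and your argument could have been closed elementarily by noting the second invariant parabola $\eta=\frac{1}{2}\xi^2$, i.e. $x=-y^2$ (indeed $\frac{d}{dt}(x+y^2)=2y(x+y^2)$), which is exactly the boundary of the elliptic sector, the hyperbolic sector being $\{x>0\}$ and the parabolic sectors filling $-y^2<x<0$.
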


\begin{proof} Knowing that a symmetric singularity cannot be a focus (see~\cite[Section~$2$]{Teixeira2}), statements $(a)$ and $(b)$ follows from the linear parts of $X_{12}$ at the singularities $p$ and $q^\pm$,
	\[DX_{12}(p)=\left(\begin{array}{cc} 0 & -\lambda \\ \frac{1}{2} & 0 \end{array}\right), \quad DX_{12}(q^\pm)=\pm\left(\begin{array}{cc} \frac{1}{\sqrt{2}}\sqrt{-\lambda} & 0 \\ \star & \sqrt{2}\sqrt{-\lambda} \end{array}\right),\]
in addition with the Stable Manifold Theorem, see \cite[Sec~$2.7$]{Perko}.

Doing a quasihomogeneous blow up at the origin when $\lambda=0$ with weights $(\alpha,\beta)=(2,1)$ (see Appendix~\ref{BlowUp}), we obtain the vector field $X_0=X_0(r,\theta)$ given by,
	\[\dot r = rR_1(r,\theta), \quad \dot \theta = \cos\theta\left(\cos\theta+\sin^2\theta\right)+rR_2(r,\theta).\]
Therefore the singularities of $X_0$ such that $r=0$ are given by $\theta\in\left\{\pm\frac{1}{2}\pi,\theta^\pm\right\}$, where $\theta^\pm=\pm\arccos\left(\frac{1}{2}\left(1-\sqrt{5}\right)\right)$. Hence statement $(c)$ follows from
\[\begin{array}{l}
	DX_0\left(0,\pm\frac{\pi}{2}\right)=\pm\left(\begin{array}{cc} 1 & 0 \\ 0 & -1 \end{array}\right), \vspace{0.2cm} \\
	DX_0(0,\theta^\pm)=\pm\left(\begin{array}{cc} \frac{1}{2}\sqrt{5\left(\sqrt{5}-2\right)} & 0 \\ 0 & \sqrt{5\left(\sqrt{5}-2\right)} \end{array}\right).
\end{array}\]
Statement $(d)$ follows from the invariance of the $y$-axis and the fact that a limit cycle cannot surround only a unique saddle, see Appendix~\ref{index}. \end{proof}

Now the next step is to find the local phase portrait at the infinity of the Poincar\'e compactification of $X_{12}$, see Appendix~\ref{PC}.

\begin{proposition}\label{X12infinite}
	The infinity of $X_{12}$ is filled up of singularities and its local phase portrait is given by Figure~\ref{12infinity0}.
	\begin{figure}[ht]	
		\begin{center}
				\includegraphics[width=3cm]{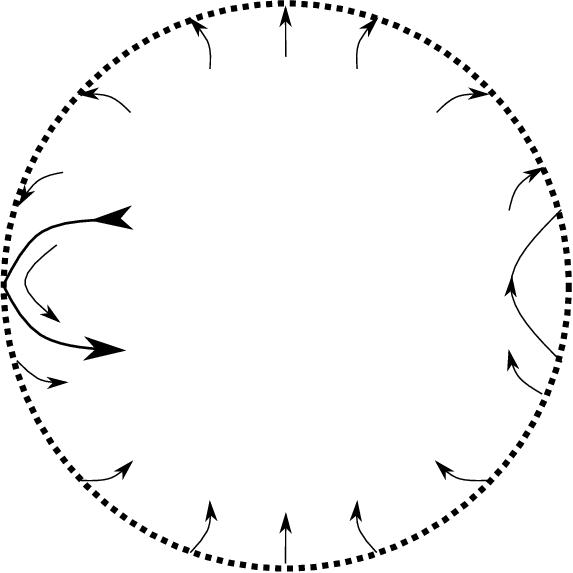}
		\end{center}	
		\caption{Local phase portrait at the infinity of $X_{12}$.}\label{12infinity0}
	\end{figure}
\end{proposition}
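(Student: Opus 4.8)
The plan is to compute the Poincar\'e compactification of $X_{12}$ (recall we take $\delta=1$) in the charts $U_1,V_1,U_2,V_2$ of Appendix~\ref{PC} and read off the behaviour on the line at infinity $\{v=0\}$. Since $X_{12}$ has degree $n=2$, the compactified field is the chart expression multiplied by the factor $v^{n-1}=v$. A direct substitution in $U_1$ (with $x=1/v$, $y=u/v$) gives
\[\dot u=\tfrac{v}{2}(1+\lambda v),\qquad \dot v=-uv,\]
and in $U_2$ (with $x=u/v$, $y=1/v$) it gives
\[\dot u=-\tfrac{uv}{2}(u+\lambda v),\qquad \dot v=-\tfrac{v}{2}\bigl(2+uv+\lambda v^2\bigr).\]
In both charts every point of $\{v=0\}$ annihilates the field, which at once proves that the infinity is filled up with singularities; the charts $V_1,V_2$ follow from $U_1,U_2$ by the standard antipodal substitution.

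Because the circle at infinity is a non-isolated singular set, no Jacobian will determine the local picture, so I would desingularize by dividing out the common factor $v$. The crucial observation is that on the side $v\geq0$ corresponding to the Poincar\'e disk the factor $v$ is nonnegative, hence the orbits of the compactified field and those of the reduced field coincide \emph{as sets} for $v>0$ (they differ only by a positive time rescaling), and one extends them continuously to the boundary $\{v=0\}$. In $U_1$ the reduced field is
\[\dot u=\tfrac12(1+\lambda v),\qquad \dot v=-u,\]
which is regular on the whole arc $\{v=0\}$ (there $\dot u=\tfrac12\neq0$), so it is transverse to the line at infinity and fixes the direction in which orbits wind towards it. Carrying out the same reduction in $U_2$ one checks that at the north pole $(u,v)=(0,0)$ the reduced field equals $(\dot u,\dot v)=(0,-1)\neq(0,0)$, so the pole is again a regular point of the reduced flow; the east pole appears as the origin of $U_1$ and is likewise regular. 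I would then glue the charts along their overlaps to obtain the global portrait on the circle at infinity, which is Figure~\ref{12infinity0}.

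To halve the case analysis I would use the reversibility: in the $U_1$ chart the involution $\varphi(x,y)=(x,-y)$ becomes $(u,v)\mapsto(-u,v)$ together with time reversal, and the reduced field above is indeed invariant under $(u,v,t)\mapsto(-u,v,-t)$. Hence the portrait is symmetric with respect to the $v$-axis, it suffices to treat $u\geq0$ and reflect, and the east/west poles are exactly the symmetric singularities at infinity. Combining this symmetry with the transversality of the reduced flow in $U_1,V_1$ and its regular behaviour at the two poles coming from $U_2,V_2$ reconstructs the claimed local phase portrait.

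The main obstacle is precisely the desingularization: since the entire circle at infinity consists of singularities, the portrait is not governed by any linear part, and the argument must proceed through the reduced (rescaled) field. The delicate points are to justify that cancelling the positive factor $v$ leaves the orbits unchanged on the disk side $v\geq0$, and to match the charts correctly at the poles, where the reduced field is merely regular rather than singular, so that the qualitative approach of the interior orbits to infinity is read off consistently across $U_1,V_1,U_2,V_2$.
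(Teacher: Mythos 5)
Your chart expressions and overall strategy---observe that $\{v=0\}$ consists entirely of singularities, divide out the common factor $v$, and read the portrait off the regularized field---coincide with the paper's proof. However, your argument breaks at the single point where the real content lies. You claim that the reduced field in $U_1$, namely $\dot u=\tfrac12(1+\lambda v)$, $\dot v=-u$, is ``transverse to the line at infinity'' along the whole arc $\{v=0\}$, and that the east pole is ``likewise regular'' in the same sense as the north pole. Non-vanishing of the field does hold everywhere on the arc, but transversality to $\{v=0\}$ requires $\dot v=-u\neq0$, and this fails exactly at the origin of $U_1$, i.e.\ at the east pole: there the reduced field equals $\left(\tfrac12,0\right)$, which is \emph{tangent} to the line at infinity. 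The contact is quadratic: taking $h(u,v)=v$ one gets $h(0,0)=Xh(0,0)=0$ and $X^2h(0,0)=-\tfrac12\neq0$, so near the east pole the orbits of the reduced field are, to leading order, the downward parabolas $v=v_0-u^2$; orbits inside the disk emanate from infinite singularities with $u<0$, arc past the pole, and fall back onto infinite singularities with $u>0$. This second-order Lie derivative computation is precisely what the paper performs, and it is what distinguishes the east and west poles from the north and south poles, where your transversality claim is correct (the reduced field in $U_2$ at the origin is $(0,-1)$).

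As written, your proof would assign the same transverse-crossing behaviour to all four poles, which yields a portrait different from Figure~\ref{12infinity0}; beyond the (correct and easy) observation that infinity is a curve of singularities, the content of the proposition is exactly this difference in the order of contact of the regularized flow with $\{v=0\}$ at the two pairs of poles. The repair is short---replace the blanket transversality claim by the contact computation $h(0,0)=Xh(0,0)=0$, $X^2h(0,0)=-\tfrac12$ at the origin of $U_1$, and transport the conclusion to $V_1$ by the antipodal identification or by your reversibility remark, which is correct---but without it the final step ``reconstructs the claimed local phase portrait'' does not go through.
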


\begin{proof} In the chart $U_1$ of the Poincar\'e compactification system $X_{12}$ becomes
	\[\dot u = \frac{1}{2}v+\frac{\lambda}{2}v^2, \quad \dot v = -uv.\]
And in the chart $U_2$ we have
	\[\dot u = -\frac{1}{2}u^2 v -\frac{\lambda}{2}u v^2, \quad \dot v = -v-\frac{1}{2}u v^2-\frac{\lambda}{2}v^3.\]
Therefore we conclude that the infinity is filled up of singularities. Doing a regu\-larization in $v$, i.e. dividing both systems by $v$, one can see that the flow of the regularized system at $v=0$ is given by Figure~\ref{12infinity}. More precisely, let $X=X(u,v)$ be the regularized version of system $X_{12}$ at the chart $U_1$, i.e. $X$ is given by
	\[\dot u = \frac{1}{2}+\frac{\lambda}{2}v, \quad \dot v = -u.\]
Let $h(u,v)=v$, $Xh(p)=\left<X(p),\nabla h(p)\right>$ and $X^nh(p)=\left<X(p),\nabla X^{n-1}h(p)\right>$, where $\left<\cdot,\cdot\right>$ denotes the standard inner product of $\mathbb{R}^2$. One can conclude that $h(0,0)=Xh(0,0)=0$, $X^2h(0,0)=-\frac{1}{2}$, and thus obtain Figure~\ref{12infinity}.
\begin{figure}[ht]	
		\begin{center}
			\includegraphics[width=3cm]{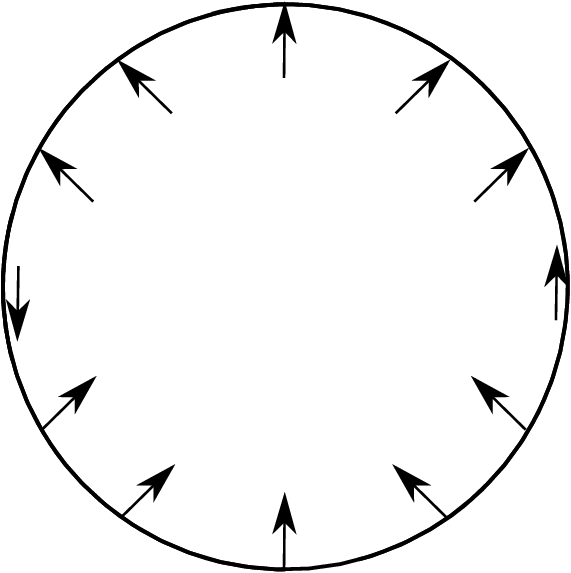}
		\end{center}	
	\caption{Local phase portrait of the regularized infinity of $X_{12}$.}\label{12infinity}
\end{figure} \end{proof}

Now we have the local behavior of $X_{12}$ in all its singularities and we know that it does not have limit cycles. Hence we can start working on its separatrices. 

\begin{proposition}
	The phase portrait of $X_{12}$ for $\lambda<0$ is the one given in Figure~\ref{cod0and1}.
\end{proposition}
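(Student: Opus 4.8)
The plan is to assemble the global phase portrait from the local data already established, using the reversibility symmetry and the absence of limit cycles to pin down the global connections of the separatrices. Since $\lambda<0$ we keep $\delta=1$ as explained above. By Propositions~\ref{X12finite} and~\ref{X12infinite} the finite singularities are the hyperbolic saddle $p=(-\lambda,0)$ on the positive $x$-axis, the unstable node $q^+$ and the stable node $q^-$ on the $y$-axis, the infinity is entirely filled with singularities whose regularized behaviour is that of Figure~\ref{12infinity0}, and there are no limit cycles. The first step is simply to record these facts together with the two invariances that organize the whole picture: the $y$-axis $\{x=0\}$ is invariant (since $\dot x=xy$ vanishes there), and the field is $\varphi$-reversible with $\varphi(x,y)=(x,-y)$, so the phase portrait is symmetric with respect to the $x$-axis, with reversal of orientation. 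In particular it suffices to describe the flow in the closed upper half-plane and then reflect.

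Second, I would analyze the flow on and across the two coordinate axes to set up a qualitative grid. On the invariant $y$-axis one reads off $\dot y=\tfrac12(2y^2+\lambda)$, which is negative strictly between $q^-$ and $q^+$ and positive outside the segment joining them; this confirms that $q^+$ repels and $q^-$ attracts along the axis, and shows that the segment between the nodes is a single orbit running from $q^+$ down to $q^-$. On the $x$-axis the field is vertical, with $\dot x=0$ and $\dot y=\tfrac12(x+\lambda)$, pointing upward for $x>-\lambda$ and downward for $x<-\lambda$; thus every orbit other than $p$ crosses the $x$-axis transversally, which combined with the reflection symmetry forces the four separatrices of $p$ to be pairwise mirror-symmetric.

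Third comes the core of the argument: determining the $\alpha$- and $\omega$-limits of the four saddle separatrices and of the separatrices issuing from the line of infinite singularities. From the eigenvectors of $DX_{12}(p)$, the unstable manifold of $p$ leaves into the first and third local quadrants while the stable manifold enters from the second and fourth, and by the reflection symmetry the unstable and stable manifolds are mirror images of one another. I would then construct convenient transversal straight lines and curves (for instance verticals $x=\text{const}$ and the level curves of the function $h$ used in Proposition~\ref{X12infinite}) to trap each branch: bounding a region by an arc of the invariant $y$-axis, the circle at infinity, and a transversal section, and using that this region contains no singularity other than a node and no limit cycle (Proposition~\ref{X12finite}(d)), so that Poincar\'e--Bendixson forces the trapped branch to converge to the node. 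The expected outcome is that the down-left branch of the unstable manifold has $q^-$ as $\omega$-limit (and symmetrically the up-left branch of the stable manifold has $q^+$ as $\alpha$-limit), while the two remaining branches escape to infinity, connecting to the appropriate infinite singularities as dictated by Figure~\ref{12infinity0}.

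Finally, I would glue the upper half-plane picture to its mirror image and invoke the classification of phase portraits by their separatrix skeleton (Appendix~\ref{MN}): once all singularities, the behaviour at infinity, the absence of limit cycles, and the $\alpha/\omega$-limits of every separatrix are fixed, the canonical regions are determined and the phase portrait is forced to be the one in Figure~\ref{cod0and1}. The main obstacle I anticipate is precisely the third step, namely rigorously establishing the separatrix connections, i.e. proving that a given saddle branch actually reaches a node rather than escaping to infinity. This is delicate because it is a semi-global statement that the local linear data alone cannot settle; the difficulty lies in choosing transversal curves that cut off the correct trapping region so that Poincar\'e--Bendixson applies cleanly, with the invariance of the $y$-axis serving as one of the bounding sides.
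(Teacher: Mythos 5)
Your proposal is correct and takes essentially the same approach as the paper: assemble the local data of Propositions~\ref{X12finite} and~\ref{X12infinite}, then use the invariance of the $y$-axis, the direction in which the flow crosses the $x$-axis, the reversibility symmetry and the absence of limit cycles to force the separatrix connections (down-left unstable branch ending at $q^-$, up-left stable branch born at $q^+$, the remaining branches going to and coming from the singularities at infinity), exactly as the paper does. The only difference is cosmetic: where you trap the left unstable branch and invoke Poincar\'e--Bendixson, the paper rules out a crossing of the $x$-axis by noting that such a crossing would (by symmetry) close that branch into a loop, leaving the stable separatrix on the right of the saddle with no possible $\alpha$-limit; both arguments rest on the same crossing-direction computation, and the escape-to-infinity issue you flag is settled in your setup by observing that $\dot y>0$ in the strip $0\leqslant x\leqslant-\lambda$ once $y<-\sqrt{-\lambda/2}$, so the trapped branch stays in a compact region.
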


\begin{proof} From Propositions~\ref{X12finite} and \ref{X12infinite} we conclude Figure~\ref{Field12Local1}.
\begin{figure}[ht]	
	\begin{center}
		\begin{overpic}[height=4cm]{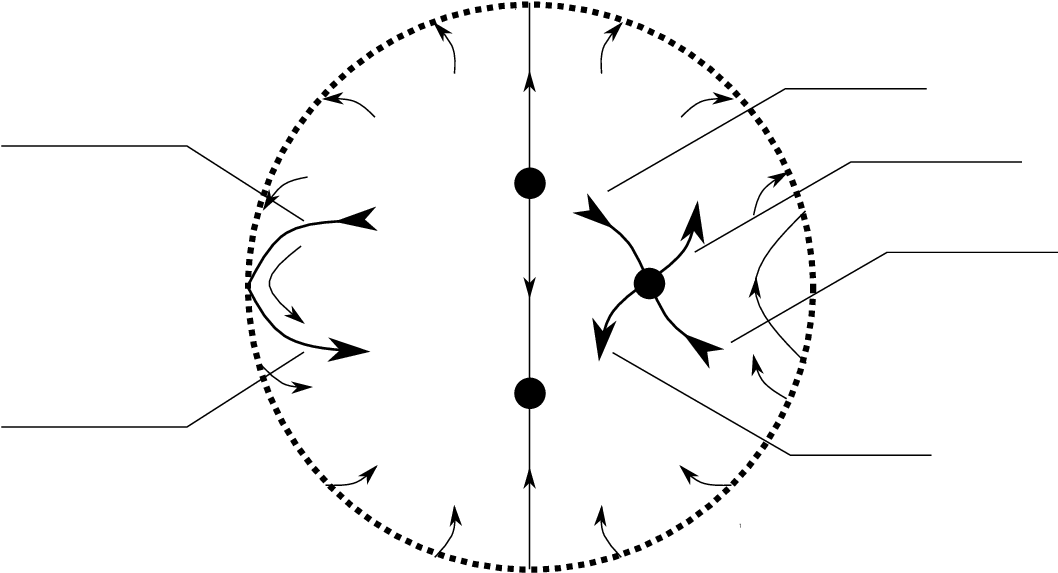} 
			\put(97,37.5){$1$}
			\put(100.5,29){$2$}
			\put(88,44.5){$3$}
			\put(89,10){$4$}
			\put(-3,39){$5$}
			\put(-3,12.5){$6$}	
		\end{overpic}
	\end{center}	
	\caption{Local behavior of the phase portrait of $X_{12}$ for $\lambda<0$.}\label{Field12Local1}
\end{figure}
First we observe that the flow crosses the $x$-axis downwards if $x<-\lambda$ and upwards if $x>-\lambda$. Therefore, separatrix $4$ cannot cross the $x$-axis, otherwise separatrix $2$ would have no $\alpha$-limit. Hence, separatrix $4$ has no other option than ending at the stable node $q^-$. By symmetry separatrix $3$ borns at the unstable node. Now separatrix $1$ has no other option than ending at some singularity of the first quadrant of the infinity and thus separatrix $2$ borns at the symmetric singularity at the forth quadrant. Clearly separatrix $6$ has no option than ending at the stable node and therefore separatrix $5$ borns at the unstable node. \end{proof}

\begin{proposition}
	The phase portrait of $X_{12}$ for $\lambda=0$ is the one given in Figure~\ref{cod0and1}.
\end{proposition}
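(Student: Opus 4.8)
The plan is to obtain the global portrait for $\lambda=0$ by gluing together the local data already in hand and then closing up the separatrices by an elimination argument, exactly in the spirit of the preceding proposition for $\lambda<0$. First I would superpose Propositions~\ref{X12finite}$(c)$ and \ref{X12infinite}: the origin is the only finite singularity and carries the sectorial structure of Figure~\ref{12origin}, with the hyperbolic sectors associated with the blow-up saddles at $\theta=\pm\frac{\pi}{2}$ together with the parabolic sectors associated with the nodes at $\theta^{\pm}$, while the whole line at infinity is singular with regularized flow given by Figure~\ref{12infinity0}. This yields an unfinished picture in which every separatrix has one endpoint prescribed at the origin or at infinity and the other endpoint still to be located, analogous to the one in Figure~\ref{Field12Local1}.

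Next I would use three exact features of the field at $\lambda=0$. The line $x=0$ is invariant because $\dot x=xy$ vanishes there, and on it $\dot y=y^{2}\geqslant0$; hence the positive $y$-semiaxis is an orbit escaping to the north pole and the negative $y$-semiaxis is an orbit entering the origin, which already fixes two separatrices. On the $x$-axis one has $\dot x=0$ and $\dot y=\frac{1}{2}x$, so the flow is vertical and points upward for $x>0$ and downward for $x<0$; this is the $\lambda=0$ specialization of the crossing rule used before, with the threshold collapsed onto the origin. Finally, since $X_{12}$ is $\varphi$-reversible with $\varphi(x,y)=(x,-y)$, the phase portrait is symmetric with respect to the $x$-axis as a family of unoriented orbits, orientations being reversed by the reflection; thus an $\omega$-limit in the upper half-plane is mirrored by an $\alpha$-limit in the lower half-plane, and it suffices to resolve the separatrices in the closed upper half-plane.

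With these tools I would finish the portrait by the same elimination used for $\lambda<0$, resolving the remaining endpoint of each separatrix of the upper half-plane. A separatrix leaving a hyperbolic sector of the origin into the region $x>0$ cannot return across the positive $x$-axis, where the flow points upward, and cannot spiral because there are no limit cycles by Proposition~\ref{X12finite}$(d)$; hence it must end at a singularity of the first quadrant at infinity, and its reflected companion is born at the corresponding point of the fourth quadrant. The separatrices in the region $x<0$ are treated by the mirror-image crossing rule and by checking compatibility with the parabolic sectors of the nodes $\theta^{\pm}$ and with the regularized infinite flow of Figure~\ref{12infinity0}, while the two saddle directions $\theta=\pm\frac{\pi}{2}$ are precisely the pieces of the invariant $y$-axis already identified. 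Excluding every alternative gluing in this way leaves a single consistent configuration, which is the portrait displayed in Figure~\ref{cod0and1}.

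The step I expect to be the main obstacle is controlling the coalescence: at $\lambda=0$ the three hyperbolic singularities $p,q^{\pm}$ of the case $\lambda<0$ have merged into the single degenerate origin, so the separatrix connections cannot be copied from the previous proposition but must be re-derived from the blow-up sectors of Figure~\ref{12origin}. The delicate point is to verify that each hyperbolic sector of the origin connects to the correct quadrant at infinity and that the parabolic sectors generate no spurious separatrices; the vertical crossing along the $x$-axis, the invariance of the $y$-axis, and the reversibility symmetry are exactly the ingredients that exclude the competing matchings and single out the portrait of Figure~\ref{cod0and1}.
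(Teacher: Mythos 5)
Your toolkit is the right one --- the same three ingredients the paper uses (invariance of the $y$-axis, the vertical crossing direction of the flow on the $x$-axis, and reversibility) --- but the decisive elimination step is applied in the wrong half-plane, and the connection that actually has to be proved is left unproven. At $\lambda=0$ the hyperbolic sector of the origin fills the region $x>0$, and its only boundary separatrices are the two $y$-semiaxes, which you yourself correctly identify as the orbits joining the origin to the north and south poles. Consequently there is no separatrix ``leaving a hyperbolic sector of the origin into the region $x>0$'', and no separatrix of the $\lambda=0$ portrait has its $\omega$-limit at a first-quadrant point of the equator (nor its $\alpha$-limit at a fourth-quadrant point): the orbits with those limits are the generic orbits of the single canonical region filling $x>0$. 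The configuration your step (a) would produce --- separatrices landing at first/fourth-quadrant points of infinity --- is the $\lambda<0$ picture, where such orbits exist as separatrices of the finite saddle; precisely this feature disappears when the saddle and the two nodes coalesce, so your elimination, taken at face value, reconstructs the wrong phase portrait and contradicts your own second paragraph.

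What actually must be shown at $\lambda=0$ is the pair of connections in the half-plane $x<0$: the two remaining separatrix stubs at the origin (tangent to the $y$-axis on the side $x<0$, bounding the family of orbits homoclinic to the origin) must glue with the two stubs at the west pole; this is exactly the paper's ``separatrix $4$ glues to separatrix $2$, and by symmetry $1$ glues to $3$''. Your proposal only gestures at this (``mirror-image crossing rule \dots\ excluding every alternative gluing'') without carrying it out, and indeed never mentions the elliptic region of homoclinic orbits bounded by these connections, which is the distinctive feature of the $\lambda=0$ portrait. The missing argument is short: the separatrix emanating from the west pole into $\{x<0,\,y<0\}$ can never cross the invariant $y$-axis, can never cross the negative $x$-axis because there the flow crosses vertically downward, and cannot have its $\omega$-limit on the third-quadrant arc of the equator or at the south pole, since those singularities are repelling by the analysis at infinity; the only remaining possibility is that it ends at the origin, and reflection in the $x$-axis then produces the symmetric connection in $y>0$. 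That one observation is essentially the paper's entire proof, and it is the step your proposal does not contain.
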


\begin{proof} From Propositions~\ref{X12finite} and \ref{X12infinite} we conclude Figure~\ref{Field12Local2}.
\begin{figure}[ht]	
	\begin{center}
		\begin{overpic}[height=4cm]{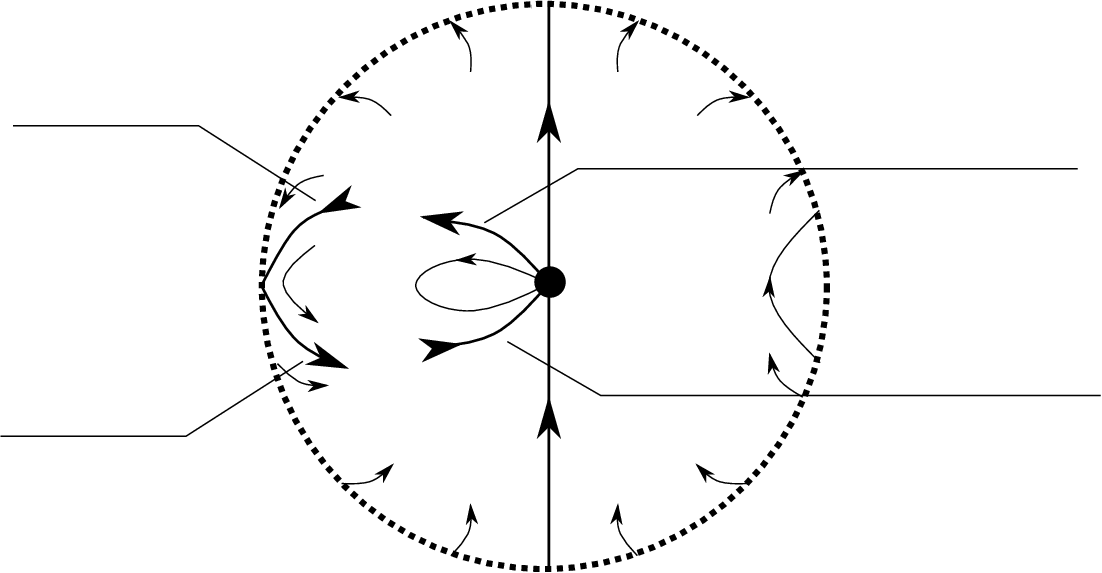} 
			\put(98,35){$1$}
			\put(101,15){$2$}
			\put(-2,39){$3$}
			\put(-3,11){$4$}
		\end{overpic}
	\end{center}	
	\caption{Local behavior of the phase portrait of $X_{12}$ for $\lambda=0$.}\label{Field12Local2}
\end{figure}
We know that the flow crosses the $x$-axis downwards if $x<0$, therefore separatrix $4$ has no other option than glue up to separatrix $2$ and thus by symmetry separatrix $1$ glues up to separatrix $3$. \end{proof}

\begin{proposition}
	The phase portrait of $X_{12}$ for $\lambda>0$ is the one given in Figure~\ref{cod0and1}.
\end{proposition}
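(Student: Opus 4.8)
The plan is to reuse the three-step scheme of the two previous propositions: assemble the local information of Propositions~\ref{X12finite} and~\ref{X12infinite} into an unfinished global picture, determine the $\alpha$- and $\omega$-limits of the finitely many separatrices, and glue the resulting pieces using the reversibility $\varphi(x,y)=(x,-y)$, which forces the whole portrait to be symmetric with respect to the $x$-axis. What changes for $\lambda>0$ is that the nodes $q^{\pm}$ disappear (the number $\sqrt{-\lambda/2}$ is no longer real), so by Proposition~\ref{X12finite}$(a)$ the unique finite singularity is the \emph{center} $p=(-\lambda,0)$, while by Proposition~\ref{X12infinite} the line at infinity is still entirely filled with singularities, with the behaviour of Figure~\ref{12infinity0}. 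Two observations structure the portrait: along the $y$-axis one has $\dot x=0$ and $\dot y=\tfrac12(2y^2+\lambda)>0$, so the $y$-axis is a single singularity-free orbit going upward from the south pole to the north pole, which in particular confines the period annulus of $p$ to the half-plane $x<0$; and along the $x$-axis $\dot y=\tfrac12(x+\lambda)$ is positive for $x>-\lambda$ and negative for $x<-\lambda$, so no orbit may cross the $x$-axis against this sign.

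The crux is to locate the outer boundary of the period annulus of $p$. As $X_{12}$ has no limit cycles (Proposition~\ref{X12finite}$(d)$) and there is no finite saddle, this boundary must be a separatrix limiting on singularities at infinity, and I would produce it explicitly. First I would exhibit the invariant parabola $\{f=0\}$ with $f(x,y)=2x+2y^2+\lambda$: a direct computation gives $\langle X_{12},\nabla f\rangle=2y\,f$, so the parabola is invariant with cofactor $2y$. It lies entirely in $x<0$, contains no finite singularity, has the center in its interior, and, since both of its ends approach the $-x$ direction, is a graphic homoclinic to the west pole. To prove that it is \emph{exactly} the annulus boundary I would use that $X_{12}$ admits the first integral $H(x,y)=(y^2+x+\tfrac\lambda2)/x^2$ (obtained by setting $w=y^2$, which turns the orbit equation into the linear equation $w'-\tfrac2xw=1+\tfrac\lambda x$); the level curves $y^2=Cx^2-x-\tfrac\lambda2$ are ellipses for $-\tfrac1{2\lambda}\le C<0$, shrinking to $p$ as $C\to-\tfrac1{2\lambda}$ and filling the interior of the parabola as $C\to0^-$, which identifies the closed orbits of the annulus and its boundary at once.

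It then remains to read off the separatrices outside the annulus and in $x>0$. In the region between the parabola and the $y$-axis the flow is pushed toward the invariant $y$-axis below the $x$-axis ($\dot x=xy>0$ for $y<0$) and away from it above ($\dot x<0$ for $y>0$), so each such orbit runs from a singularity of the lower-left infinity to a singularity of the upper-left infinity; in $x>0$ the sign of $\dot y$ on the positive $x$-axis together with $\dot x=xy$ forces each orbit to join a singularity of the lower-right infinity to one of the upper-right infinity. Pairing the upper and lower separatrices by the symmetry with respect to the $x$-axis then yields the portrait of Figure~\ref{cod0and1}.

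I expect the main obstacle to be the middle step, namely justifying that the invariant parabola is the precise boundary of the period annulus rather than merely an invariant curve inside the plane. The first integral settles this cleanly, but if one prefers to stay with the purely qualitative tools of the previous proofs, one must rule out any intermediate limiting behaviour of the growing closed orbits; I would do so by combining their monotone growth, the impossibility of crossing the invariant $y$-axis, and a transversality check of the flow on a vertical segment $x=c<0$.
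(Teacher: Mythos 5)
Your proposal is correct, and it reaches the portrait of Figure~\ref{cod0and1} by a genuinely different mechanism than the paper. The paper's own proof is purely qualitative and takes one line: after assembling the local data of Propositions~\ref{X12finite} and~\ref{X12infinite}, only the pair of separatrices attached to infinity is left undetermined; the sign of $\dot y=\frac{1}{2}(x+\lambda)$ on the $x$-axis forces separatrix $1$ to cross that axis, and the reversibility then glues it to its mirror image, separatrix $2$, creating the loop that bounds the period annulus of the center $p=(-\lambda,0)$, after which the portrait follows. You instead integrate the system explicitly, and your computations check out: $\langle X_{12},\nabla f\rangle=2yf$ for $f=2x+2y^2+\lambda$, and $H=(y^2+x+\lambda/2)/x^2$ is indeed a first integral on each half-plane $x\neq0$ (the substitution $w=y^2$ does produce the linear equation $w'-\frac{2}{x}w=1+\frac{\lambda}{x}$), so every orbit lies on a conic $y^2=Cx^2-x-\lambda/2$: ovals around $p$ for $-1/(2\lambda)<C<0$, the invariant parabola for $C=0$, and hyperbola branches for $C>0$. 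This identifies the outer boundary of the period annulus exactly as the parabola homoclinic to the west pole and describes all remaining orbits, the symmetry being needed only to match upper and lower ends at infinity. What your route buys: the separatrix connection is exhibited as an explicit algebraic curve rather than deduced by a ``no other option'' argument, and integrability makes the extent of the annulus and the absence of limit cycles transparent. What the paper's route buys: brevity, and a soft method (flow direction across lines plus reversibility) that is exactly the one reused for the later, non-integrable families, where no rational first integral is available. Two cosmetic points: at $C=-1/(2\lambda)$ the level set is the point $p$ rather than an ellipse, and the ``purely qualitative'' fallback you sketch at the end (monotone growth of the closed orbits plus a transversality check on a segment $x=c<0$) is superfluous once $H$ is on the table --- it is essentially the paper's argument in disguise.
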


\begin{proof} From Propositions~\ref{X12finite} and \ref{X12infinite} we conclude Figure~\ref{Field12Local3}.
\begin{figure}[ht]	
	\begin{center}
		\begin{overpic}[height=4cm]{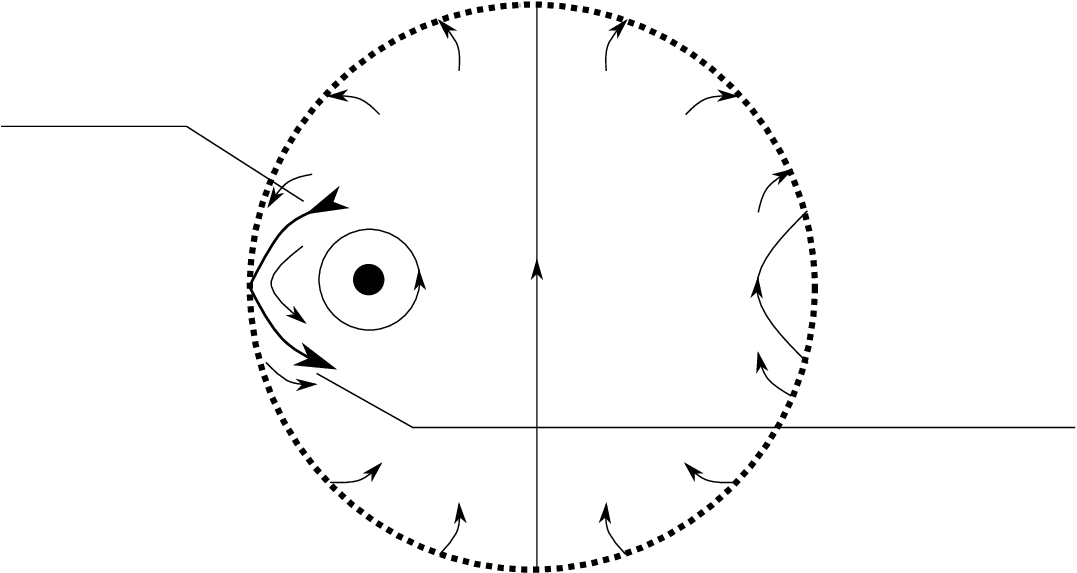} 
			\put(-3,40){$2$}
			\put(101,12){$1$}
		\end{overpic}
	\end{center}	
	\caption{Local behavior of the phase portrait of $X_{12}$ for $\lambda>0$.}\label{Field12Local3}
\end{figure}
Here we just need to observe that separatrix $1$ must cross the $x$-axis and thus by symmetry it must glue to separatrix $2$. \end{proof}

With this same approach one can also conclude the phase portraits of $X_{01}$, $X_{02}$, $X_{11}$, $X_{13}$ and $X_{14}$.

Before we study the systems of codimension $2$ we must know when they are topologically equivalent, at some finite singularity, to the origin of $X_{11}$, $X_{12}$, $X_{13}$ or $X_{14}$, for $\lambda\approx0$. To this end we use a characterization provided by Teixeira~\cite{Teixeira}.

Let $h\colon\mathbb{R}^2\to\mathbb{R}$ be given by $h(x,y)=y$ and recall that the $n$-th \emph{Lie derivative} of $h$ at $p$ in the direction of a vector field $X$ is given by,
	\[Xh(p)=\left<X(p),\nabla h(p)\right>, \quad X^nh(p)\left<X(p),\nabla X^{n-1}h(p)\right>,\]
where $\left<\cdot,\cdot\right>$ denotes the standard inner product of $\mathbb{R}^2$.

\begin{definition}\label{Def4}
   Let $p$ be a symmetric singularity ($S$-singularity, for short) of $X\in\mathfrak{X}$. We say that $p$ is a:
	\begin{enumerate}[label=(\alph*)]
		\item \emph{cusp} $S$-singularity if $h(p)=Xh(p)=X^2h(p)=0$ and $X^3h(p)\neq0$.
		\item \emph{nodal} $S$-singularity if $X(p)=0$, the eigenvalues $\lambda_1$ and $\lambda_2$ of $DX(p)$ are real, distinct, $\lambda_1\lambda_2>0$ and its eigenspaces are transverse to $y=0$ at $p$.
		\item \emph{saddle} $S$-singularity if $X(p)=0$, the eigenvalues $\lambda_1$ and $\lambda_2$ of $DX(p)$ are real, distinct, $\lambda_1\lambda_2<0$ and its eigenspaces are transverse to $y=0$ at $p$.
		\item \emph{focal} $S$-singularity if $p$ is a hyperbolic singularity of $X$ and the eigenvalues of $DX(p)$ are given by $\lambda=a+ib$ with $b\neq0$.
	\end{enumerate}
\end{definition}

It follows from Lemma $4.2$ of \cite{Teixeira} that if $p$ is a cusp, nodal, saddle or focal $S$-singularity, then $X$ is locally topologically equivalent at $p$ to the origin of $X_{11}$, $X_{12}$, $X_{13}$ or $X_{14}$, respectively, for $\lambda\approx0$.

\section{System $X_{21}$}\label{sec3}

Let us remember that system $X_{21}$ is given by
	\[\dot x = y, \quad \dot y = \frac{1}{2}\left(bx^3+\beta x + \alpha\right),\]
with $b\in\{-1,1\}$. From now on we assume $b=1$. Clearly the finite singularities of $X_{21}$ are given by $(x_0,0)$, where $x_0$ is a real root of the polynomial $p(x)=x^3+\beta x+\alpha$. Following the Cardano-Tartaglia formula (see Appendix~\ref{TGSection}) we define $D=\frac{1}{27}\beta^3+\frac{1}{4}\alpha^2$ and observe that $D=0$ if, and only if, $\beta=\beta(\alpha)$ where
	\[\beta(\alpha)=-\frac{3}{\sqrt[3]{4}}\sqrt[3]{\alpha^2}.\]
Therefore from Appendix~\ref{TGSection} we conclude the following statements.
\begin{enumerate}[label=(\alph*)]
	\item If $\beta>\beta(\alpha)$ we have a unique finite singularity.
	\item If $\beta<\beta(\alpha)$ we have three finite singularities.
	\item If $\beta=\beta(\alpha)$ and $\alpha\neq0$ we have two finite singularities.
	\item If $\beta=\beta(\alpha)$ and $\alpha=0$, then the origin is the unique finite singularity.
\end{enumerate}
From Appendix~\ref{TGSection} we have that we can denote the zeros of $p(x)=0$ by 
	\[\begin{array}{l}
		\displaystyle x_1=S+T, \vspace{0.2cm} \\
		\displaystyle x_2=-\frac{1}{2}(S+T)+\frac{1}{2}\sqrt{3}(S-T)i, \vspace{0.2cm} \\
		\displaystyle x_3=-\frac{1}{2}(S+T)-\frac{1}{2}\sqrt{3}(S-T)i,
	\end{array}\]
where $i^2=-1$ and
	\[S=f\left(-\frac{\alpha}{2}+D^\frac{1}{2}\right), \quad T=f\left(-\frac{\alpha}{2}-D^\frac{1}{2}\right), \quad f(x)=\begin{cases} \sqrt[3]{x}, \text{ if } D\geqslant0, \\ x^\frac{1}{3}, \text{ if } D<0. \end{cases}\]
	
With this machinery we can now study the relative position of the real solutions of the polynomial $p(x)=x^3+\beta x+\alpha$.

\begin{proposition}
	The relative position of the real solutions of $p$ is the following.
	\begin{enumerate}[label=(\alph*)]
		\item If $D<0$ then $x_2<x_3<x_1$.
		\item If $D=0$ and $\alpha<0$, then $x_2=x_3<x_1$.
		\item If $D=0$ and $\alpha>0$, then $x_1<x_2=x_3$.
		\item Otherwise $x_1$ is the unique real solution.
	\end{enumerate}
\end{proposition}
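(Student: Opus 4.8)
The plan is to work directly from the Cardano--Tartaglia formulas given in the excerpt, since the three roots $x_1$, $x_2$, $x_3$ are already written explicitly in terms of $S$ and $T$, which in turn depend on $D$ and $\alpha$. I would organize the proof by cases according to the sign of $D$, exactly as the four statements are grouped, and in each case determine which of $x_1,x_2,x_3$ are real and then compare their magnitudes.

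First I would treat the case $D<0$, which is the generic three-root case (recall from the earlier discussion that $D<0$ corresponds to $\beta<\beta(\alpha)$ and three finite singularities). When $D<0$, the quantity $D^{1/2}$ is purely imaginary, so $-\tfrac{\alpha}{2}+D^{1/2}$ and $-\tfrac{\alpha}{2}-D^{1/2}$ are complex conjugates; using $f(x)=x^{1/3}$ one checks that $S$ and $T$ are themselves complex conjugates, hence $S+T$ and $S-T$ give $x_1=S+T\in\mathbb{R}$ together with two further real roots $x_2$, $x_3$ (the imaginary unit $i$ in the formulas for $x_2$, $x_3$ cancels against the imaginary part of $S-T$). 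The key step here is to pass to the trigonometric form of the solution: writing $S=\rho e^{i\psi}$ so that $x_1=2\rho\cos\psi$, $x_2=2\rho\cos(\psi+\tfrac{2\pi}{3})$, $x_3=2\rho\cos(\psi-\tfrac{2\pi}{3})$ for an appropriate branch, the ordering $x_2<x_3<x_1$ reduces to comparing cosines of three angles spaced $\tfrac{2\pi}{3}$ apart. I would pin down the admissible range of $\psi$ (using $\alpha$ and $\beta$, or equivalently the branch choices in $f$) to fix the stated strict inequalities.

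Next I would handle $D=0$, where $D^{1/2}=0$ and therefore $S=T=f(-\tfrac{\alpha}{2})$. This immediately forces $x_2=x_3=-\tfrac{1}{2}(S+T)=-S$ and $x_1=S+T=2S$, so the double root coincides and the only thing left is to compare $2S$ with $-S$, i.e.\ to decide the sign of $S=\sqrt[3]{-\alpha/2}$. If $\alpha<0$ then $S>0$, giving $x_2=x_3=-S<2S=x_1$, which is statement $(b)$; if $\alpha>0$ then $S<0$, giving $x_1=2S<-S=x_2=x_3$, which is statement $(c)$. Finally, the case $D>0$ gives $S$ and $T$ real with $S\neq T$, so $S-T\neq0$ and the term $\tfrac{1}{2}\sqrt{3}(S-T)i$ is a genuine nonzero imaginary part; hence $x_2$, $x_3$ are non-real and $x_1=S+T$ is the unique real solution, establishing statement $(d)$ (noting that the subcase $\alpha=0$, $\beta=\beta(\alpha)$, i.e.\ $D=0$ with $\alpha=0$, collapses all three roots to the origin and is consistent with the boundary between the cases).

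The main obstacle I anticipate is the case $D<0$: there the cube roots in $S$ and $T$ are cube roots of complex numbers, so the expressions $x_1,x_2,x_3$ are only real after a nontrivial cancellation, and proving the \emph{strict ordering} $x_2<x_3<x_1$ (rather than merely that all three are real and distinct) requires a careful and consistent choice of branch for $f(x)=x^{1/3}$ together with the trigonometric reformulation. I would expect to spend most of the effort making the angle $\psi$ and its range precise, and confirming that the geometry of three cosines at angular separation $\tfrac{2\pi}{3}$ yields exactly the claimed order; the remaining cases $D=0$ and $D>0$ are essentially sign computations and should follow quickly once the formulas are specialized.
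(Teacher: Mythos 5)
Your proposal is correct and follows essentially the same route as the paper: the paper also works directly from the Cardano--Tartaglia expressions, passes to the trigonometric form $x_1=2\sqrt{-\beta/3}\cos\theta$, $x_2=-\sqrt{-\beta/3}(\cos\theta+\sqrt{3}\sin\theta)$, $x_3=-\sqrt{-\beta/3}(\cos\theta-\sqrt{3}\sin\theta)$ when $D<0$ (splitting into $\alpha<0$, $\alpha>0$, $\alpha=0$ to pin the angle $\theta$ into $(0,\tfrac{\pi}{6})$, $(\tfrac{\pi}{6},\tfrac{\pi}{3})$, or the explicit value, exactly the branch/range analysis you anticipate as the main work), and then settles $D=0$ and $D>0$ by the same sign computation on $S=T=\sqrt[3]{-\alpha/2}$ and the conjugate-root observation. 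The only material the paper adds beyond your outline are limit statements (e.g. $|x_2-x_3|\to0$ as $D\to0$) that are not needed for the proposition itself but are reused later in the bifurcation analysis.
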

\begin{proof} We start with statement $(a)$, which we separate in three parts. If $D<0$ and $\alpha<0$, then $S=\left(a+ib\right)^\frac{1}{3}$, where $a=-\frac{1}{2}\alpha>0$ and $b=\sqrt{-D}>0$. Therefore if we denote $r=\sqrt{a^2+b^2}$ and $\theta=\frac{1}{3}\arctan\left(\frac{b}{a}\right)$ we obtain $S=\sqrt{-\frac{\beta}{3}}(\cos\theta+i\sin\theta)$. In a similar way one can see that $T=\sqrt{-\frac{\beta}{3}}(\cos\theta-i\sin\theta)$. Hence,
	\[\begin{array}{l}
		x_1=2\sqrt{-\frac{\beta}{3}}\cos\theta, \\ 			x_2=-\sqrt{-\frac{\beta}{3}}(\cos\theta+\sqrt{3}\sin\theta), \\
		x_3=-\sqrt{-\frac{\beta}{3}}(\cos\theta-\sqrt{3}\sin\theta).
	\end{array}\]
Observe that 
	\[\frac{b}{a}=\frac{\sqrt{-D}}{-\frac{1}{2}\alpha}=-\frac{2}{\alpha}\sqrt{-\frac{\beta^3}{27}-\frac{\alpha^2}{4}}=\frac{2}{|\alpha|}\sqrt{-\frac{\beta^3}{27}-\frac{\alpha^2}{4}}=\sqrt{\frac{4}{27}\frac{(-\beta)^3}{\alpha^2}-1}.\]
Given $\beta_0<0$ fixed we know that $\alpha\in(\alpha(\beta_0),0)$, where $\alpha(\beta)=-\sqrt{-\frac{4}{27}\beta^3}$. Similarly, given $\alpha_0<0$ fixed we know that $\beta\in(-\infty,\beta(\alpha))$. Therefore we have that $\theta=\theta(\alpha,\beta)\in\left(0,\frac{1}{6}\pi\right)$ and 
	\[\lim\limits_{D\rightarrow0}\theta(\alpha,\beta)=0, \quad \lim\limits_{\beta\rightarrow-\infty}\theta(\alpha_0,\beta)= \lim\limits_{\alpha\rightarrow0}\theta(\alpha,\beta_0)=\frac{\pi}{6}.\]
Hence we conclude that in this case we have $x_2<x_3<0<x_1$ and also,
	\[\lim\limits_{D\rightarrow0}|x_2-x_3|=0, \quad \lim\limits_{\beta\rightarrow-\infty}x_3(\alpha_0,\beta)=\lim\limits_{\alpha\rightarrow0}x_3(\alpha,\beta_0)=0.\]
If $D<0$ and $\alpha>0$ we have 
	\[\begin{array}{l}x_1=2\sqrt{-\frac{\beta}{3}}\cos\theta, \\
		x_2=-\sqrt{-\frac{\beta}{3}}(\cos\theta+\sqrt{3}\sin\theta), \\
		x_3=-\sqrt{-\frac{\beta}{3}}(\cos\theta-\sqrt{3}\sin\theta),
	\end{array}\]
where $\theta=\frac{1}{3}\arctan\left(-\frac{2}{\alpha}\sqrt{-D}\right)+\frac{1}{3}\pi$. Since we have $\alpha>0$ it follows that 
	\[-\frac{2}{\alpha}\sqrt{-D}=-\sqrt{\frac{4}{27}\frac{(-\beta)^3}{\alpha^2}-1}.\]
Given $\beta_0<0$ fixed we know that $\alpha\in(0,\alpha(\beta_0))$, where $\alpha(\beta)=\sqrt{-\frac{4}{27}\beta^3}$. Similarly, given $\alpha_0>0$ fixed we know that $\beta\in(-\infty,\beta(\alpha))$. Therefore it follows that $\theta=\theta(\alpha,\beta)\in\left(\frac{1}{6}\pi,\frac{1}{3}\pi\right)$ and 
	\[\lim\limits_{D\rightarrow0}\theta(\alpha,\beta)=\frac{\pi}{3}, \quad \lim\limits_{\beta\rightarrow-\infty}\theta(\alpha_0,\beta)= \lim\limits_{\alpha\rightarrow0}\theta(\alpha,\beta_0)=\frac{\pi}{6}.\]
Hence we conclude that in this case we have $x_2<0<x_3<x_1$ and also,
	\[\lim\limits_{D\rightarrow0}|x_1-x_3|=0, \quad \lim\limits_{\beta\rightarrow-\infty}x_3(\alpha_0,\beta)=\lim\limits_{\alpha\rightarrow0}x_3(\alpha,\beta_0)=0.\]
If $D<0$ and $\alpha=0$, then
	\[S=\frac{1}{2}\sqrt{-\frac{\beta}{3}}\left(\sqrt{3}+i\right), \quad  T=\frac{1}{2}\sqrt{-\frac{\beta}{3}}\left(\sqrt{3}-i\right),\]
and thus
	\[x_1=\sqrt{-\beta}, \quad x_2=-\sqrt{-\beta}, \quad x_3=0.\]
Moreover, observe that in this case we have
	\[\lim\limits_{D\rightarrow0}x_1(\alpha,\beta)=\lim\limits_{D\rightarrow0}x_2(\alpha,\beta)=0.\]
Hence one can conclude statement $(a)$.

If $D=0$ then $S=T=\sqrt[3]{-\frac{1}{2}\alpha}$ and thus 
\begin{equation}\label{1}
	x_1 = 2\sqrt[3]{-\frac{1}{2}\alpha}, \quad x_2=x_3=\sqrt[3]{\frac{1}{2}\alpha}.
\end{equation}
Statements $(b)$ and $(c)$ follow from \eqref{1} if $\alpha\neq0$. If $\alpha=0$ one obtain $x_1=x_2=x_3$ as in statement $(d)$.

Finally if $D>0$ or $D=\alpha=0$, then the unique real zero is given by
	\[x_1=\sqrt[3]{-\frac{\alpha}{2}+\sqrt{D}}+\sqrt[3]{-\frac{\alpha}{2}-\sqrt{D}},\]
as in statement $(d)$. 	\end{proof}

Now that we have information about the real roots of $p(x)=x^3+\beta x+\alpha$ we study the phase portraits of the singularities of $X_{21}$.

\begin{proposition}
	Let $p_i=(x_i,0)$, $i\in\{1,2,3\}$ be the singularities of $X_{21}$. The local phase portraits of $X_{21}$ at these singularities are the following.
	\begin{enumerate}[label=(\alph*)]
		\item If $D>0$, then $p_1$ is a saddle.
		\item If $D<0$, then $p_1$ and $p_2$ are saddles and $p_3$ is a center.
		\item If $D=0$ and $\alpha\neq0$, then $p_1$ is a saddle and $p_2=p_3$ is a cusp, as in Figure~\ref{21D0}.
		\item If $D=\alpha=0$, then $p_1$ is a non-hyperbolic saddle.
	\end{enumerate}
\begin{figure}[ht]
	\begin{multicols}{2}		
		\begin{center}
			\includegraphics[height=3cm]{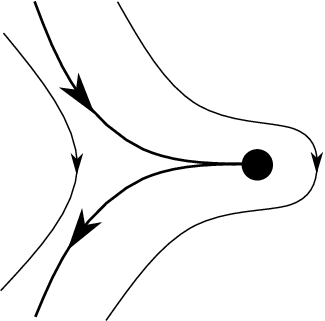}
			
			$\alpha<0$
		\end{center}		
		\columnbreak		
		\begin{center}
			\includegraphics[height=3cm]{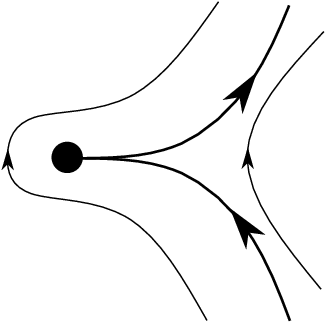}
			
			$0<\alpha$
		\end{center}				
	\end{multicols}
	\caption{Local phase portrait of $X_{21}$ at $p_2=p_3$ when $D=0$.}\label{21D0}
\end{figure}
\end{proposition}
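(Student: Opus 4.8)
The plan is to read off the type of each singularity from the linear part whenever possible, and to resolve the two degenerate configurations by a blow up. Writing $p(x)=x^3+\beta x+\alpha$, a direct computation gives the Jacobian matrix of $X_{21}$ at a finite singularity $(x_0,0)$,
\[DX_{21}(x_0,0)=\left(\begin{array}{cc} 0 & 1 \\ \frac{1}{2}p'(x_0) & 0 \end{array}\right),\]
whose eigenvalues are $\pm\sqrt{\frac{1}{2}p'(x_0)}$. Hence the sign of $p'(x_0)$ controls everything. If $p'(x_0)>0$ the eigenvalues are real with opposite signs and $(x_0,0)$ is a hyperbolic saddle; if $p'(x_0)<0$ they are purely imaginary, and since a symmetric singularity cannot be a focus (as used in the proof of Proposition~\ref{X12finite}) the point is a center; and if $p'(x_0)=0$ the linear part is nilpotent and a finer analysis is needed.

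First I would fix the signs of $p'$ at the simple roots using the sign pattern of the monic cubic $p$ together with the ordering of the roots established in the previous proposition. Being monic, $p$ changes sign from $-$ to $+$ at its smallest and largest real roots and from $+$ to $-$ at an intermediate root. Thus in case $(a)$, where $x_1$ is the unique (simple) real root, $p$ passes from negative to positive, $p'(x_1)>0$, and $p_1$ is a saddle. In case $(b)$, where $x_2<x_3<x_1$, the outer roots give $p'(x_2),p'(x_1)>0$ (two saddles) and the middle root gives $p'(x_3)<0$ (a center). In case $(c)$ with $\alpha\neq0$ we factor $p(x)=(x-x_1)(x-x_2)^2$, so $p'(x_1)=(x_1-x_2)^2>0$ and $p_1$ is a saddle, while the double root $x_2=x_3$ falls into the nilpotent case treated next.

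The heart of the matter is the degenerate points: the double root in $(c)$ and the triple root in $(d)$. At a root $x_0$ of multiplicity $m$ I would translate it to the origin by $u=x-x_0$, bringing $X_{21}$ to the form $\dot u=y$, $\dot y=\frac{1}{2}p(x_0+u)=\frac{1}{2}\bigl(a u^{m}+O(u^{m+1})\bigr)$ with $a\neq0$; note that no term in $y$ ever appears, since the second component of $X_{21}$ is independent of $y$. I would then invoke the classification of nilpotent singularities, equivalently perform a quasihomogeneous blow up as in Appendix~\ref{BlowUp}: when $m$ is even the origin is a cusp, and when $m$ is odd it is a saddle if $a>0$ and a center if $a<0$. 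For the double root of $(c)$ we have $m=2$, hence a cusp; writing $p(x_0+u)=-3c\,u^2+u^3$ with $c=\sqrt[3]{-\alpha/2}$, the coefficient $-3c$ has the same sign as $\alpha$, which is precisely what distinguishes the two cusps drawn in Figure~\ref{21D0}. For $(d)$ one has $\beta=0$ and $p(x)=x^3$, so $m=3$ and $a=1>0$, giving the announced non-hyperbolic saddle.

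The step I expect to be the main obstacle is the analysis of the nilpotent points, and especially checking that the cusp opens in the direction indicated in Figure~\ref{21D0} for each sign of $\alpha$: the linear part carries no information there, so one must perform the blow up (or appeal to the nilpotent normal form) and track carefully the sign of the quadratic coefficient. By contrast, the hyperbolic cases reduce to the eigenvalue computation above, the only nontrivial ingredient being the reversibility fact excluding foci, which promotes the purely imaginary cases to genuine centers.
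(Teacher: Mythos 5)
Your proof is correct, and its skeleton coincides with the paper's: read the type off the Jacobian $\bigl(\begin{smallmatrix} 0 & 1 \\ \frac{1}{2}p'(x_i) & 0 \end{smallmatrix}\bigr)$, use the reversibility fact that a symmetric singularity cannot be a focus to upgrade the purely imaginary case to a center, and resolve the nilpotent points by a quasihomogeneous blow up. Where you genuinely diverge is in how the sign of $p'(x_i)$ is determined, which is in fact the bulk of the paper's proof: the paper substitutes the explicit Cardano--Tartaglia expressions for the roots (the quantities $S$, $T$ and the angle $\theta\in(0,\tfrac{\pi}{3})$ from the preceding proposition) and verifies by direct computation that, e.g., $\tfrac{1}{2}\beta+\tfrac{3}{2}x_2^2=-\beta\sin\theta(\sin\theta+\sqrt{3}\cos\theta)>0$ while $\tfrac{1}{2}\beta+\tfrac{3}{2}x_3^2=-\beta\sin\theta(\sin\theta-\sqrt{3}\cos\theta)<0$. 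You replace all of this by the qualitative observation that a monic cubic with distinct real roots changes sign from $-$ to $+$ at its outer roots and from $+$ to $-$ at the middle one, together with the ordering $x_2<x_3<x_1$ already established; in case $(c)$ you use the factorization $p(x)=(x-x_1)(x-x_2)^2$ instead of substituting $\beta=-3\sqrt[3]{\alpha^2/4}$. Your route is more elementary and less error-prone, at the cost of not producing the explicit quantities (which the paper in any case does not reuse); it also hinges on the roots being simple, which you correctly justify from $D>0$ (resp. $D<0$) forcing simplicity. Finally, your treatment of statement $(d)$ via the Hamiltonian/nilpotent classification for $\dot u=y$, $\dot y=\tfrac{1}{2}u^3$ supplies an argument where the paper only asserts the conclusion is ``clear'', and your sign bookkeeping for the cusp coefficient $-3c=3\sqrt[3]{\alpha/2}$ correctly reproduces the two orientations in Figure~\ref{21D0}.
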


\begin{proof} Observe that the Jacobian matrix at a singularity $p_i$ is given by 
	\[DX_{21}(p_i)=\left(\begin{array}{cc} 0 & 1 \\ \frac{1}{2}\beta+\frac{3}{2}x_i^2 & 0 \end{array}\right).\]

We first work statement $(a)$. In this case $p_1$ is the unique finite singularity. Clearly it is a saddle if $\beta\geqslant0$. If $\beta<0$ then
	\[\frac{1}{2}\beta+\frac{3}{2}x_1^2=\frac{3}{2}(S^2+T^2)-\frac{1}{2}\beta>0,\]
with the last inequality due to the fact that $\beta<0$. In particular if $\beta<0$ then $p_1$ is still a saddle. 

Let us now work statement $(b)$. Since $\beta<0$ is a necessary conditions for $D$, we have that
	\[\frac{1}{2}\beta+\frac{3}{2}x_1^2=-\beta\left(2\cos^2\theta-\frac{1}{2}\right)>0.\]
Therefore $p_1$ is a hyperbolic saddle. Observe now that 
	\[\frac{1}{2}\beta+\frac{3}{2}x_2^2=-\beta\sin\theta(\sin\theta+\sqrt{3}\cos\theta)>0,\]
because $\beta<0$ and $\theta\in\left(0,\frac{1}{3}\pi\right)$. Therefore, $p_2$ is also a hyperbolic saddle. Finally, 
	\[\frac{1}{2}\beta+\frac{3}{2}x_3^2=-\beta\sin\theta(\sin\theta-\sqrt{3}\cos\theta)<0,\]
because $\beta<0$ and $\theta\in\left(0,\frac{1}{3}\pi\right)$. This, in addition with the fact that $p_3$ is a symmetric singularity, ensures that it is a center (see~\cite[Section $2$]{Teixeira2}).

We now work statement $(c)$. Observe that
	\[\frac{1}{2}\beta+\frac{3}{2}x_1^2=\frac{9}{2}\sqrt[3]{\frac{\alpha^2}{4}}>0, \quad \frac{1}{2}\beta+\frac{3}{2}x_2^2=0,\]
because $\beta=-3\sqrt[3]{\frac{\alpha^2}{4}}$. Therefore, $(x_1,0)$ is a hyperbolic saddle and $(x_2,0)$ is degenerated. Translating $(x_2,0)$ to the origin and then doing a quasihomogenous blow up with weight $(2,3)$ we obtain Figure~\ref{21D0}. 

Finally, assuming the hypothesis of statement $(d)$ we obtain $\alpha=\beta=0$ and then the origin is the unique finite singularity and it is clearly a non-hyperbolic saddle. \end{proof}

We now study the phase portraits of $X_{21}$.

\begin{proposition}
	The phase portrait of $X_{21}$ with $b=1$ is the one given in Figure~\ref{21a}.
\end{proposition}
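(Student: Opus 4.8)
The plan is to exploit the fact that $X_{21}$ is a Hamiltonian (equivalently Newtonian) system and then glue together the local data already obtained. Indeed $\operatorname{div} X_{21} = \partial_x(y) + \partial_y\bigl(\tfrac12(x^3+\beta x+\alpha)\bigr) = 0$, so $X_{21}$ admits the first integral $H(x,y) = \tfrac12 y^2 + V(x)$ with potential $V(x) = -\tfrac18 x^4 - \tfrac\beta4 x^2 - \tfrac\alpha2 x$, whose critical points are exactly the finite singularities found before. This has two immediate consequences: $X_{21}$ has no limit cycles, and every orbit lies on a level curve of $H$, so the global phase portrait is completely determined once we know the local portraits at the singularities (already given by the previous propositions), the behaviour at infinity, and which saddle separatrices are joined. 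Moreover the change $(x,y)\mapsto(-x,-y)$ is a topological equivalence sending $X_{21}(\alpha,\beta)$ to $X_{21}(-\alpha,\beta)$, so it is no loss of generality to assume $\alpha\geqslant0$.

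First I would settle the infinity via the Poincar\'e compactification. The degree-three homogeneous part of $X_{21}$ is $(0,\tfrac12 x^3)$, so the singularities at infinity are the zeros of $xQ_3-yP_3=\tfrac12 x^4$ on the equator, i.e.\ only the north and south poles (the ends of the $y$-axis). In the chart $U_1$ one finds $\dot u = \tfrac12 + O(v)$, confirming there is no singularity there, while in $U_2$ the pole is degenerate, $\dot u = -\tfrac12 u^4 + v^2 + \cdots$, and I would resolve it with a quasihomogeneous blow-up as in Appendix~\ref{BlowUp}. The level-curve asymptotics $y^2 = 2(H - V)\sim \tfrac14 x^4$, whence $y\sim\pm\tfrac12 x^2$, show that every unbounded orbit is tangent to the vertical direction and limits to one of the two poles; by reversibility the two poles are symmetric.

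Next I would assemble the portrait region by region, using the root configuration and the local portraits from the preceding propositions and respecting the symmetry $y\mapsto-y$. For $D>0$ there is a single saddle whose four separatrices escape to the poles. For $D=0$ with $\alpha\neq0$ the picture is the saddle $p_1$ together with the cusp $p_2=p_3$ of Figure~\ref{21D0}, and for $D=\alpha=0$ it is the single non-hyperbolic saddle at the origin. The only case needing genuine work is $D<0$, where there are two saddles $p_1$ (right) and $p_2$ (left) and a center $p_3$ between them, the center being surrounded by periodic orbits.

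The hard part is deciding, in the region $D<0$, which separatrices of the two saddles are joined; for a Hamiltonian system this reduces to comparing the energies $H(p_i)=V(x_i)$. Using $x_i^3=-\beta x_i-\alpha$ one computes $V(x_1)-V(x_2) = -\tfrac18(x_1-x_2)\bigl(\beta(x_1+x_2)+3\alpha\bigr)$, and since $x_1+x_2=-x_3$ with $x_3$ a root of $p$, within $D<0$ this difference vanishes exactly on $\alpha=0$. Hence, under $\alpha\geqslant0$: for $\alpha>0$ the right saddle $p_1$ has the lower energy and carries a homoclinic loop enclosing the center $p_3$, while the separatrices of $p_2$ sweep around that loop and limit to the poles; for $\alpha=0$ the two saddles lie on the same level set and are joined by a symmetric pair of heteroclinic orbits bounding the period annulus of the center, the case $\alpha<0$ being the mirror image under $(x,y)\mapsto(-x,-y)$. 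The remaining, routine, task is to check that no further connections are possible, which is immediate from the level-set structure, and to match each resulting portrait together with the infinity to the corresponding picture in Figure~\ref{21a}.
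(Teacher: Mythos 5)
Your proof is correct, and for the crucial case $D<0$ it takes a genuinely different route from the paper's. The paper never invokes the Hamiltonian structure explicitly: it rules out limit cycles by the symmetry of the finite singularities, obtains the portrait for $D<0$, $\alpha=0$ from the extra reversibility $(x,y)\mapsto(-x,y)$, gets the portrait near the boundary $D=0$ by perturbing the cusp $S$-singularity, and then proves that $\alpha=0$ is the \emph{only} parameter value admitting a saddle connection via a Perko-style displacement function: from $X_{21}\land\partial_\alpha X_{21}=\tfrac12 y$ it deduces $\partial d/\partial\alpha>0$, hence uniqueness of the connection. Your first integral $H=\tfrac12 y^2+V(x)$ replaces that machinery: orbits lie on level curves, so saddles connect exactly when $V(x_1)=V(x_2)$, and the factorization $V(x_1)-V(x_2)=-\tfrac18(x_1-x_2)\bigl(\beta(x_1+x_2)+3\alpha\bigr)$ with $x_1+x_2=-x_3$ settles the whole diagram exactly and globally, even identifying which saddle carries the homoclinic loop on each side of $\alpha=0$ — information the paper only reaches by continuation from the cusp boundary. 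Two small points: (i) the step you assert rather than prove, that $3\alpha-\beta x_3=0$ forces $\alpha=0$ inside $D<0$, deserves its two-line verification (substituting $\alpha=\beta x_3/3$ into $x_3^3+\beta x_3+\alpha=0$ gives $x_3=0$, hence $\alpha=0$, or $x_3^2=-\tfrac43\beta$, which yields $D=0$); (ii) the blow-up of the degenerate poles should actually be carried out to justify the portraits on the Poincar\'e disk, though the paper leaves the same work implicit in its appeal to Section~\ref{sec2}. As for what each approach buys: yours is shorter, explicit and global, but is special to this conservative family; the paper's displacement-function argument (whose integrand is sign-definite precisely because the divergence vanishes, so it secretly exploits the same structure) is the template that extends to the non-Hamiltonian systems $X_{22}$ and $X_{23}$ treated later.
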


\begin{proof} First we observe that $X_{21}$ cannot have any limit cycle because every finite singularity is symmetric. Using the same approach as in Section~\ref{sec2} one can conclude the phase portraits of $X_{21}$ when $D\geqslant0$. Knowing that $X_{21}$ is also $\varphi$-reversible with $\varphi(x,y)=(-x,y)$ when $\alpha=0$ one can conclude its phase portrait when $D<0$ and $\alpha=0$. Knowing that singularity $p_2=p_3$ is a \emph{cusp $S$-singularity} of $X_{21}$ when $D=0$ and $\alpha\neq0$ one can also conclude the phase portrait for $D<0$, but near the boundary $D=0$. Now we must prove that this last phase portrait holds for any $(\alpha,\beta)$ such that $D<0$ and $\alpha\neq0$.

If $D<0$ we know that we have three finite singularities, a center $p_3$ in the middle and a hyperbolic saddle $p_2$ on its left side and another hyperbolic saddle $p_1$ on its right side. Let $\mu_0=(\alpha_0,\beta_0)\in\mathbb{R}^2$ be such that there is a heteroclinic orbit $\Gamma_0$ connecting both hyperbolic saddles, $x_0\in\mathbb{R}^2$ the intersection of $\Gamma_0$ with the $y$-axis and $l_0$ a transversal section of $\Gamma_0$ passing through $x_0$. Following \cite{PerkoPaper} we define $n$ to be the coordinate along the normal line $l_0$ such that $n>0$ outside the polycycle (observe that there is another heteroclinic connection due to the symmetry) and $n<0$ inside the polycycle. We also denote $\Gamma_s$ and $\Gamma_u$ the perturbations of $\Gamma_0$, for $|(\alpha-\alpha_0,\beta-\beta_0)|$ small enough, such that $\Gamma_s$ is contained in the stable manifold of $p_1$ and $\Gamma_u$ is contained in the unstable manifold of $p_2$. Let $x_s$ and $x_u$ be the intersections of $\Gamma_s$ and $\Gamma_u$ with $l_0$, respectively, and let $n_s$ and $n_u$ be its coordinates along the line $l_0$. We now define the displacement map $d(\alpha,\beta)=n_u-n_s$. See Figure~\ref{21Poly}.
\begin{figure}[ht]
	\begin{multicols}{3}
		\begin{center}
			\begin{overpic}[width=3.5cm]{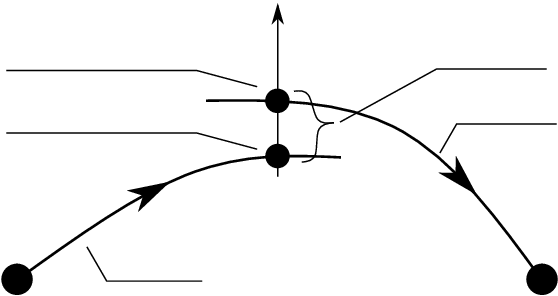} 
				\put(-2,43){$x_s$}
				\put(-3,32){$x_u$}
				\put(80,42){$d<0$}
				\put(37,0){$\Gamma_u$}
				\put(100,27){$\Gamma_s$}
			\end{overpic}
			$(\alpha,\beta)\neq(\alpha_0,\beta_0)$
		\end{center}
		\columnbreak
		\begin{center}
			\begin{overpic}[width=3.5cm]{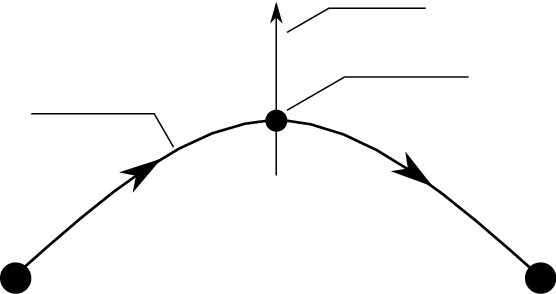} 
				\put(5,35){$\Gamma_0$}
				\put(85,37){$x_0$}
				\put(78,50){$l_0$}
			\end{overpic}
			$(\alpha,\beta)=(\alpha_0,\beta_0)$
		\end{center}
		\columnbreak
		\begin{center}
			\begin{overpic}[width=3.5cm]{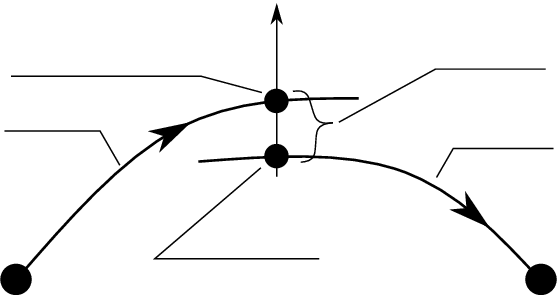} 
				\put(-6,42){$x_u$}
				\put(-7,26){$\Gamma_u$}
				\put(80,42){$d>0$}
				\put(94,28){$\Gamma_s$}
				\put(59,4){$x_s$}
			\end{overpic}
			$(\alpha,\beta)\neq(\alpha_0,\beta_0)$
		\end{center}
	\end{multicols}
	\caption{The displacement map $d(\alpha,\beta)$ defined near $(\alpha_0,\beta_0)$.}\label{21Poly}
\end{figure}

Let $\gamma(t)$ be a parametrization of $\Gamma_0$ with $\gamma(0)=x_0$ and $f(t;\alpha,\beta)=X_{21}(\gamma(t);\alpha,\beta)$. It follows from \cite{PerkoPaper} that
	\[\frac{\partial d}{\partial \alpha}(\mu_0)=\frac{1}{|f(0;\mu_0)|}\int_{-\infty}^{+\infty}\left(e^{-\int_{0}^{t}Div(f(s;\mu_0))\;ds}\right)f(t;\mu_0)\land\dfrac{\partial f}{\partial \alpha}(t;\mu_0)\;dt,\]
where $\mu_0=(\alpha_0,\beta_0)$ and $(x_1,x_2)\land(y_1,y_2)=x_1y_2-x_2y_1$. Knowing that
	\[X_{21}(x,y;\alpha,\beta)\land\frac{\partial X_{21}}{\partial \alpha}(x,y;\alpha,\beta)=\frac{1}{2}y,\]
one can see that $\frac{\partial d}{\partial \alpha}(\mu_0)>0$ whenever $d$ is defined. Hence, given $\beta=\beta_0<0$ fixed and $\alpha_0=\alpha_0(\beta_0)$ such that $d(\alpha_0,\beta_0)=0$ we conclude that $d(\cdot,\beta_0)$ increases for $\alpha\approx\alpha_0$ and thus if $\alpha_0$ exists it is unique. But we already know that $d(0,\beta)=0$ for every $\beta<0$ and therefore given $\beta=\beta_0<0$ fixed it follows that $\alpha=0$ is the unique value that satisfy $d(\alpha_0,\beta_0)=0$. \end{proof}

In a similar way one can prove that the phase portrait of $X_{21}$ with $b=-1$ is the one given by Figure~\ref{21b}.

\section{System $X_{22}$}\label{sec4}

We remember that system $X_{22a}$ is given by
	\[\dot x = (\beta+a)xy+(\beta-a)y^3, \quad \dot y = \frac{\alpha}{2}+\frac{1}{2}x^2+xy^2+\frac{1}{2}y^4,\]
with $a\in\{-1,1\}$. On the other hand, system $X_{22b}$ is given by 
	\[\dot x = (\beta+1)xy+(\beta-1)y^3, \quad \dot y = \frac{\alpha}{2}+\frac{a}{2}x^2+axy^2+\frac{a}{2}y^4,\]
with $a\in\{-1,1\}$. Observe that they are both equal if we replace $a=1$. Moreover, if we replace $a=-1$ on $X_{22b}$ and then apply the change of variables and parameters 
	\[(x,y;\alpha,\beta)\mapsto(x_1,-y_1;-\alpha_1,-\beta_1)\]
we obtain system $X_{22a}$ with $a=-1$. Hence, both systems are equivalent and we focus on $X_{22a}$.

Let $S_x=\{(x,y)\in\mathbb{R}^2:\dot x(x,y)=0\}$ and $S_y=\{(x,y)\in\mathbb{R}^2:\dot y(x,y)=0\}$ and observe that
	\[\begin{array}{l}
		S_x=\{(x,y)\in\mathbb{R}^2:y=0 \text{ or } x=\frac{a-\beta}{a+\beta}y^2\}, \vspace{0.2cm} \\
		S_y=\{(x,y)\in\mathbb{R}^2:x=-y^2\pm\sqrt{-\alpha}\}.
	\end{array}\]

\begin{proposition}
	The phase portrait of $X_{22a}$ with $a=1$ is given by Figure~\ref{22a}.
\end{proposition}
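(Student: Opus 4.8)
The plan is to follow the Approaching Method of Section~\ref{sec2}. First I would rewrite the field in the more transparent form $\dot x = y\bigl((\beta+1)x+(\beta-1)y^2\bigr)$ and $\dot y = \tfrac12\bigl(\alpha+(x+y^2)^2\bigr)$, and record the two structural facts that drive everything: the line $\{y=0\}=\operatorname{Fix}(\varphi)$ is a transversal line of reversibility that the flow crosses off the singularities, and $\operatorname{div}X_{22a}=y\bigl(\beta+1+2x+2y^2\bigr)$. Since $\dot y=0$ forces $(x+y^2)^2=-\alpha$, there are no finite singularities when $\alpha>0$ and the flow is strictly upward, so that region is immediate. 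For $\alpha\le0$ I set $c=\sqrt{-\alpha}$: the symmetric singularities sit at $(\pm c,0)$, while the asymmetric ones come from intersecting $x=\tfrac{1-\beta}{1+\beta}y^2$ with $x+y^2=\pm c$, which gives $y^2=\pm\tfrac{c(1+\beta)}{2}$. Tracking when these are real and distinct produces the candidate bifurcation curves $\alpha=0$, $\beta=1$, $\beta=-1$ in the parameter plane, and I would organize the proof region by region according to this partition, treating the three boundary cases separately.

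Next I would classify the local phase portraits. Hyperbolic singularities are settled by linearization and the Stable Manifold Theorem exactly as in Proposition~\ref{X12finite}. For the symmetric singularities $(\pm c,0)$ I would use Definition~\ref{Def4} together with the fact that a symmetric singularity is never a focus: the Jacobian at $(x,0)$ has eigenvalues $\pm x\sqrt{\beta+1}$, so each such point is a saddle $S$-singularity when $\beta>-1$ and a center when $\beta<-1$. The coalescence $\alpha=0$, where the two points merge at the origin and the zero-level parabolas $x+y^2=\pm c$ collapse into the semidefinite $\tfrac12(x+y^2)^2$, and the degenerate lines $\beta=\pm1$, where the cubic part of $\dot x$ degenerates, would be resolved by quasihomogeneous blow-ups with suitable weights, as in Proposition~\ref{X12finite}$(c)$.

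I would then compactify, writing $X_{22a}$ in the charts $U_1,U_2$ of the Poincaré compactification, locating the infinite singularities and desingularizing by a blow-up or a $v$-regularization exactly as in Proposition~\ref{X12infinite}, using the reversibility $\varphi(x,y)=(x,-y)$ to halve the work. With every local picture in hand I would rule out limit cycles: reversibility forces any periodic orbit meeting $\{y=0\}$ to be symmetric, hence non-isolated, while non-symmetric cycles would have to live entirely in $y>0$ or $y<0$ and surround an asymmetric singularity; the sign pattern of $\operatorname{div}X_{22a}$ together with an index count (a cycle must enclose total index $+1$) should exclude these. Finally I would glue the separatrices using convenient transversal lines and the parabolas $x+y^2=\pm c$ on which $\dot y$ vanishes, reading the $\alpha$- and $\omega$-limits off the direction in which the flow crosses these curves, just as in the Propositions establishing the portraits of $X_{12}$.

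The hard part will be the global separatrix connections for $\alpha<0$: deciding whether a heteroclinic (or homoclinic) connection between the two saddles on the axis, or between the asymmetric saddles, actually occurs, and whether it is persistent or confined to a single codimension-one curve. I expect this to require a displacement (Melnikov) argument of the Perko type used for $X_{21}$ in Section~\ref{sec3}, computing $\tfrac{\partial d}{\partial\alpha}$ along the symmetric connection and exploiting reversibility to pin the connection to a definite parameter value (or to show it never breaks). The other delicate point is confirming that no small limit cycle bifurcates off the asymmetric singularities, which are the only candidates permitted to be foci; ruling this out will likely need a Hopf/center analysis at those points, again aided by the reversible structure.
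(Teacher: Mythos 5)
Your overall framework coincides with the paper's: the factored form of the system, the singularities $(\pm\sqrt{-\alpha},0)$ and $y^{2}=\pm\frac{\sqrt{-\alpha}(1+\beta)}{2}$, the divergence $y(\beta+1+2x+2y^{2})$, the eigenvalues $\pm x_{0}\sqrt{\beta+1}$ at the symmetric points, and the strategy of reading the flow on $S_x$, $S_y$ and gluing separatrices by reversibility are all correct and are exactly what the paper does. However, there are two concrete gaps. First, your candidate bifurcation set $\{\alpha=0,\ \beta=\pm1\}$ is incomplete: the diagram of Figure~\ref{22a} contains the distinguished point $(\alpha,\beta)=(-16,-1)$, at which the separatrix configuration changes \emph{along} the line $\beta=-1$. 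Detecting it is one of the two ``particular reasonings'' of the paper's proof: one computes the flow on the parabola $x=-\frac{1}{2}y^{2}+\sqrt{-\alpha}$ (it points upwards for $y\neq0$) and the expansion $(f(y),y)$ of the separatrix leaving $p=(\sqrt{-\alpha},0)$, whose quadratic coefficient is compared with $-\frac{1}{2}$; equality occurs exactly at $\alpha=-16$, and this is what decides that the separatrix reaches the north pole precisely when $\alpha\leqslant-16$. Nothing in your region-by-region plan, nor in your blow-ups at $\beta=\pm1$ (which only see local pictures), would flag this threshold, so the proposal as written cannot produce the complete diagram. (Incidentally, $\beta=1$ is not a bifurcation curve at all: the singularities there stay hyperbolic, only the coefficient of $y^{3}$ in $\dot x$ vanishes.)

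Second, the step you single out as the hard part is misdirected. For $a=1$ no Perko--Melnikov displacement argument is needed, and Figure~\ref{22a} contains no connection curve analogous to $\beta_{1}(\alpha)$ of the $a=-1$ case (Figure~\ref{22b}): every saddle connection here is a symmetric one, forced by reversibility as soon as one proves that the relevant separatrix meets the axis $y=0$. The genuine difficulty, which the paper resolves by explicit trapping constructions, is precisely that step: for $\beta<-1$ (Figure~\ref{Field22aLocal1Special}) one takes the segment $l(t)=q+tv$ through the asymmetric saddle $q$ in $y<0$ along its unstable eigenvector $v$ and checks that the flow crosses $l$ upwards between $q$ and the $x$-axis, so separatrix $7$ cannot escape to the west pole, must cross $y=0$, and hence glues to its mirror image. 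Your Hopf worry is also unnecessary: when $\beta>-1$ the asymmetric singularities have trace $y_{0}(\beta+1+2\sqrt{-\alpha})\neq0$, and when $\beta<-1$ they are saddles, so no center-focus analysis arises; for the limit cycles one only needs to note that in $y>0$ the flow crosses the curve $x+y^{2}=-(\beta+1)/2$ (where the divergence vanishes) in a single direction, after which Bendixson's criterion applies on each side, which is the content of the paper's one-line appeal to that criterion.
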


\begin{proof} Here we point out only some particular reasonings. Approaching system $X_{22a}$ as in Section~\ref{sec2} and knowing that the \emph{Bendixson Criterion} prevents the existence of any limit cycles in this case, one can work with the separatrices. But here one must look also at the flows at sets $S_x$ and $S_y$. Take for example Figure~\ref{Field22aLocal1Special}.
\begin{figure}[ht]	
	\begin{center}
	\begin{overpic}[height=5cm]{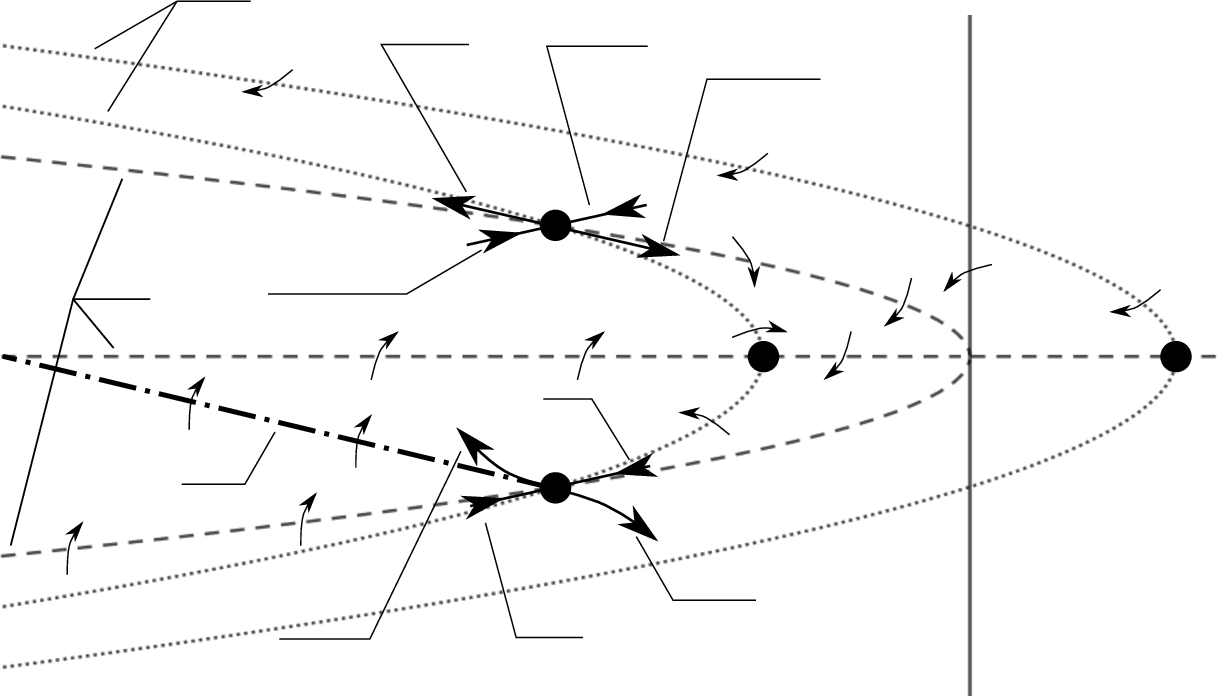} 
			\put(67,49){$1$}
			\put(53,52){$2$}
			\put(39,52){$3$}
			\put(24,33.5){$4$}
			\put(64,7){$5$}
			\put(48.5,3.5){$6$}
			\put(20,3.5){$7$}
			\put(42,23){$8$}
			\put(80,54){$y$}
			\put(13,31){$S_x$}
			\put(21,55.5){$S_y$}
			\put(13,16){$l$}
		\end{overpic}
	\end{center}	
	\caption{Illustration of the flow of $X_{22a}$ with $a=1$ and $\beta<-1$ at the sets $S_x$ and $S_y$.}\label{Field22aLocal1Special}
\end{figure}
One can obtain it with an analysis of the flow on the sets $S_x$ and $S_y$ together with an analysis at the eigenvectors of the hyperbolic saddles. Now one can conclude that separatrix $1$ must cross the $x$-axis and thus it must glue with separatrix $8$ due to symmetry. At other hand, separatrix $3$ is trapped between the $x$-axis and $S_y$ and thus it must end at infinity. Finally, separatrix $7$ has no other option than either ending at infinity (precisely at the west pole) or crossing the $x$-axis. Let $q=q(\alpha,\beta)$ be the hyperbolic saddle inside the half-plane $y<0$ and $\lambda^-<0<\lambda^+$, $\lambda^\pm=\lambda^\pm(\alpha,\beta)$, the eigenvalues of $DX_{22a}(q)$. Let $v=v(\alpha,\beta)$ be an eigenvector of $DX_{22a}(q)$ with respect to $\lambda^+$, $l(t)=q+tv$ and $t_0=t_0(\alpha,\beta)$ such that $l(t_0)$ is the intersection of $l$ and the $x$-axis. Calculations shows that separatrix $7$ is above $l$ and if $t$ is between $0$ and $t_0$, then the flow of $X_{22a}$ is transversal to $l$ and it points upwards. Therefore separatrix $7$ cannot cross $l$ and thus it must cross the $x$-axis and hence it glues up with serparatrix $4$ due to the symmetry.

The local phase portrait of $X_{22a}$ with $a=1$, $\beta-1$ and $\alpha\leqslant-16$ is shown in Figure~\ref{Field22aLocal10}.
\begin{figure}[ht]	
	\begin{center}
		\begin{overpic}[height=4cm]{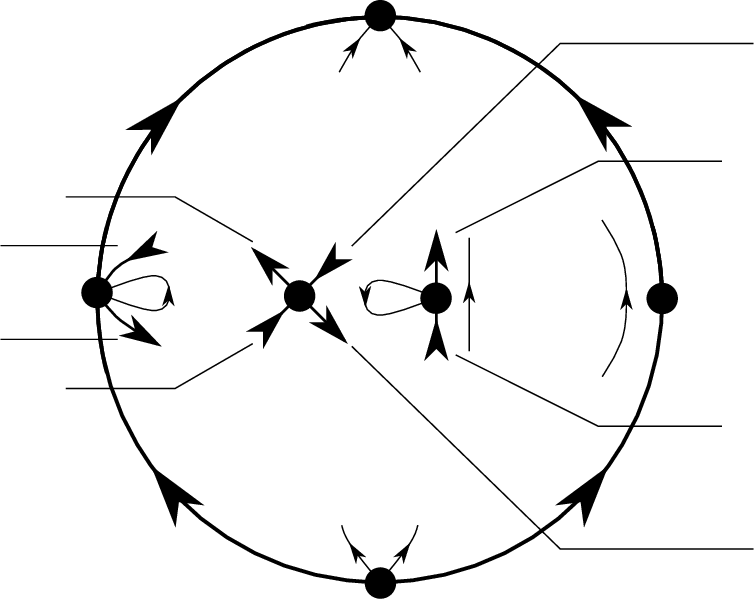} 
			\put(100.5,72){$1$}
			\put(96.5,56){$2$}
			\put(96.5,21){$3$}
			\put(101,4.5){$4$}
			\put(4.5,51){$5$}
			\put(-4.5,44.5){$6$}
			\put(-3.5,32){$7$}
			\put(4.5,26){$8$}
		\end{overpic}
	\end{center}	
	\caption{Local phase portrait of $X_{22a}$ with $a=1$, $\beta-1$ and $\alpha\leqslant-16$.}\label{Field22aLocal10}
\end{figure}
To prove that separatrix $2$, which born at the singularity $p=(\sqrt{-\alpha},0)$, ends at the north pole we calculate the flow on the parabola $x=-\frac{1}{2}y^2+\sqrt{-\alpha}$ and observe that it points upwards if $y\neq0$. We also calculate that separatrix $2$ near $p$ is given by $(f(y),y)$, where
	\[f(y)=-\frac{\sqrt{-4\sqrt{-\alpha}-\alpha}+\alpha}{2\alpha}y^2+O(y^4).\]
Knowing that
	\[-\frac{1}{2}\leqslant-\frac{\sqrt{-4\sqrt{-\alpha}-\alpha}+\alpha}{2\alpha},\]
with the right hand side equals $-\frac{1}{2}$ if and only if $\alpha=-16$, one can concludes that if $\alpha<-16$, then separatrix $2$ ends at the north pole. Moreover, if $\alpha=-16$, then 
	\[f(y)=-\frac{1}{2}y^2+\frac{1}{32}y^4+O(y^6),\]
and thus separatrix $2$ goes to the north pole in this case too. \end{proof}

\begin{proposition}
	The phase portrait of $X_{22a}$ with $a=-1$ is the one given by Figure~\ref{22b}.
\end{proposition}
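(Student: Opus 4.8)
The plan is to follow the four-stage procedure of Section~\ref{sec2}, exploiting the $\varphi$-reversibility throughout, and to treat the case $a=-1$ as a catalogue of parameter regions cut out by the bifurcation curves $\alpha=0$, $\beta=1$, $\beta=-1$, together with any saddle-connection curves that appear. First I would partition the $(\alpha,\beta)$-plane accordingly and, in each region, locate the finite singularities as the intersections $S_x\cap S_y$. The on-axis singularities are $(x_0,0)$ with $x_0=\pm\sqrt{-\alpha}$ (present iff $\alpha\leqslant0$), where
\[
DX_{22a}(x_0,0)=\left(\begin{array}{cc} 0 & (\beta-1)x_0 \\ x_0 & 0 \end{array}\right),
\]
so that $\det=-(\beta-1)x_0^2$; hence for $\beta>1$ these are hyperbolic saddles, while for $\beta<1$ the eigenvalues are purely imaginary and, since a symmetric singularity cannot be a focus, they are centers. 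The off-axis singularities solve $y^2=\tfrac{1-\beta}{2}(\pm\sqrt{-\alpha})$ and occur in a $\varphi$-symmetric pair; I would read their type from the Jacobian, using that the involution forces the mirror pair to have eigenvalues of opposite sign (a node thus pairs with a node of reversed stability, a saddle with a saddle). The degenerate boundaries $\alpha=0$, where the two on-axis points coalesce, and $\beta=\pm1$ would be handled by translating the point to the origin and performing a quasihomogeneous blow-up, exactly as in the degenerate cases of the earlier propositions.

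Next I would compactify. Writing $X_{22a}$ in the charts $U_1$ and $U_2$ I expect, as for $X_{12}$, that the line at infinity is filled with singularities; I would then divide by the appropriate power of $v$ to regularize and read off the local portrait at the poles, checking the sign of the first non-vanishing Lie derivative $X^n h$ with $h(u,v)=v$ to determine the crossing direction.

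The third stage is to exclude limit cycles. Here the divergence is
\[
\operatorname{div}X_{22a}=y\bigl[(\beta-1)+2(x+y^2)\bigr],
\]
which changes sign across $y=0$ and across the parabola $x+y^2=\tfrac{1-\beta}{2}$, so Bendixson does not apply globally. I would instead argue that, by reversibility, any periodic orbit meeting the symmetry line $y=0$ in two points is symmetric and therefore belongs to the center continuum surrounding an on-axis center, hence is not a limit cycle; an isolated limit cycle would then have to lie in one open half-plane, or occur in a $\varphi$-symmetric pair, and on each half-plane the divergence is sign-definite on either side of the parabola, which together with an index argument (a cycle must enclose a singularity of index $+1$) rules these out.

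Finally I would assemble the global portrait region by region by determining the $\alpha$- and $\omega$-limits of every separatrix. As in the $a=1$ proof, the decisive tool is the flow direction on the auxiliary curves $S_x$, $S_y$, the $x$-axis, and a suitably chosen parabola or the unstable-eigenvector line $l(t)=q+tv$ at a saddle $q$ in $y<0$: showing the flow is transversal to such a curve with a fixed sign traps a separatrix on one side and forces it either to cross $y=0$, and then glue to its mirror image by symmetry, or to reach a prescribed pole. For separatrices born at the on-axis singularities I would expand the invariant manifold as $(f(y),y)$ near the point, as in the $a=1$ case, and compare the leading coefficient against the relevant parabola to decide which pole it reaches; and whenever a saddle connection is possible I would introduce the displacement map $d(\alpha,\beta)$ and compute $\partial d/\partial\alpha$ by the Perko integral formula, as in the $X_{21}$ proof, to show the connection curve sits exactly where reversibility predicts. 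The main obstacle, as in the companion case, is precisely this last book-keeping: constructing for each region a transversal curve along which the flow has a provable constant sign, so that the fate of each trapped separatrix is unambiguous; once the $y>0$ separatrices are pinned down, the $\varphi$-symmetry determines the rest.
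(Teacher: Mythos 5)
Your local analysis (the on-axis Jacobians, the $\varphi$-symmetric pair of off-axis singularities) is correct, but the proposal breaks down at exactly the two places where this case differs from $a=1$, and both are fatal. First, the claim that limit cycles can be excluded is false. The off-axis singularities are \emph{not} symmetric singularities: they are swapped by $\varphi$, so they may be (and are) genuine foci. Computing the trace of the Jacobian at the off-axis point $q$ with $x+y^2=-\sqrt{-\alpha}$ gives $\operatorname{tr}=y\bigl[(\beta-1)-2\sqrt{-\alpha}\bigr]$, which vanishes exactly on the curve $\beta_h(\alpha)=1+2\sqrt{-\alpha}$; this is one of the curves of Figure~\ref{22b}, and crossing it the pair of foci changes stability, producing a $\varphi$-symmetric pair of limit cycles (one in each half-plane). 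Your divergence-plus-index argument cannot rule them out: a cycle surrounding the focus necessarily crosses the parabola $x+y^2=\tfrac{1-\beta}{2}$ on which the divergence changes sign, so Bendixson gives nothing there, and the index condition is satisfied by the focus itself. Indeed the paper invokes Bendixson only for $a=1$ (``in this case''); for $a=-1$ the cycles living between $\beta_h$ and the heteroclinic curve $\beta_1$ are part of the bifurcation diagram you are supposed to establish, and your list of bifurcation curves omits $\beta_h$ entirely (your $\beta=-1$ belongs to the $a=1$ diagram).

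Second, your plan to pin down the saddle-connection curve with the Perko displacement map ``where reversibility predicts'' does not transfer from $X_{21}$. There the argument worked because the extra involution $(x,y)\mapsto(-x,y)$ at $\alpha=0$ gave $d(0,\beta)=0$ for free, and monotonicity of $d$ in $\alpha$ then gave uniqueness; here reversibility says nothing about the location of $\beta_1(\alpha)$, which is not explicit, and the crux of the whole case is its position relative to $\beta_h$. The paper attacks this with a different toolkit: the degree-reducing substitution $(x,y)=(x_1,\sqrt{y_1})$, a blow-up at the translated saddle, and explicit transversal segments that trap a separatrix and force it to the north pole, which shows the connection has already broken, i.e.\ $\beta_1<\beta_h$, but only for $\alpha$ sufficiently negative. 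Even with these tools the authors do not obtain the full picture analytically: the uniqueness of the intersection $\lambda_0\approx-\tfrac{2}{3}$ of $\beta_1$ and $\beta_h$ and the ordering on each side of it rest on numerical computation, and the paper states explicitly that an analytic proof of these facts is an open problem. A proposal that treats this as routine separatrix book-keeping is not a proof of the proposition; it silently assumes away the one point the authors themselves could not settle.
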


\begin{proof} Here we point out two things. First we assume $\alpha<0$ and $\beta\geqslant1+2\sqrt{-\alpha}$. In this case one can see that the local phase portrait is given by Figure~\ref{Field22bLoca1}.
\begin{figure}[ht]	
	\begin{center}
		\begin{overpic}[height=4cm]{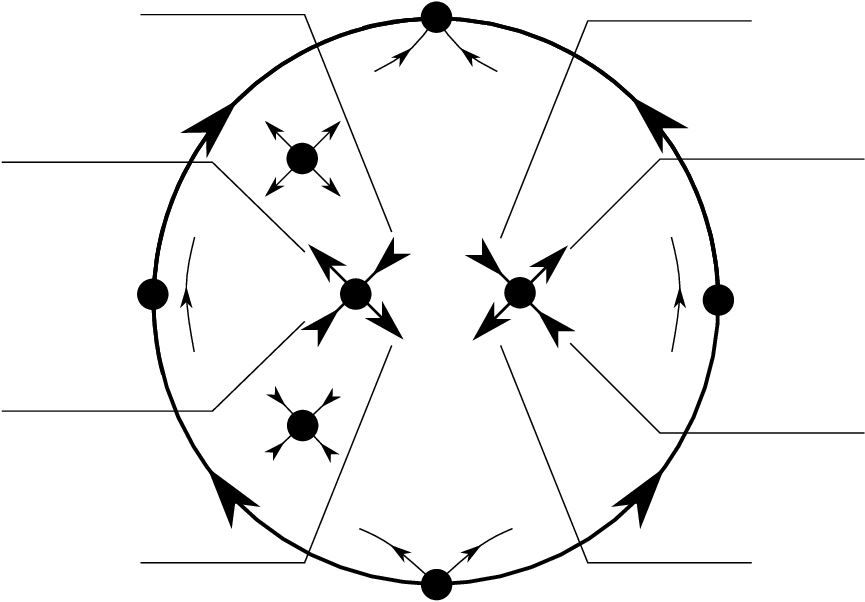} 
			\put(88,65.5){$1$}
			\put(101,49.5){$2$}
			\put(101,17.5){$3$}
			\put(88,2.5){$4$}
			\put(12.5,66){$5$}
			\put(-3.5,49){$6$}
			\put(-3.5,20){$7$}
			\put(12.5,2.5){$8$}
		\end{overpic}
	\end{center}	
	\caption{Local behavior of the phase portrait of $X_{22a}$ with $a=-1$, $\alpha<0$ and $\beta\geqslant1+2\sqrt{-\alpha}$.}\label{Field22bLoca1}
\end{figure}
We want to prove that for $\beta$ big enough separatrix $6$ must end at the north pole. 
Doing the change of coordinates $(x,y)=(x_1,\sqrt{y_1})$ for $y>0$ one get the vector field $X_1=X_1(x_1,y_1)$ whose differential system is
	\[\dot x_1 = \sqrt{y_1}\bigl((\beta-1)x_1+(\beta+1)y_1\bigr), \quad \dot y_1 = \sqrt{y_1}\bigl(\alpha+x_1^2+2x_1y_1+y_1^2\bigr).\]
The fact that separatrix $6$ at system $X_{22a}$ crosses the set $S_x$ above the unstable node implies, for the system $X_1$, that separatrix $6$ also crosses the set $S_{x_1}$ above the unstable node $q^+$. See Figure~\ref{Field22bSpecial1}.
\begin{figure}[ht]	
	\begin{center}
		\begin{overpic}[height=5cm]{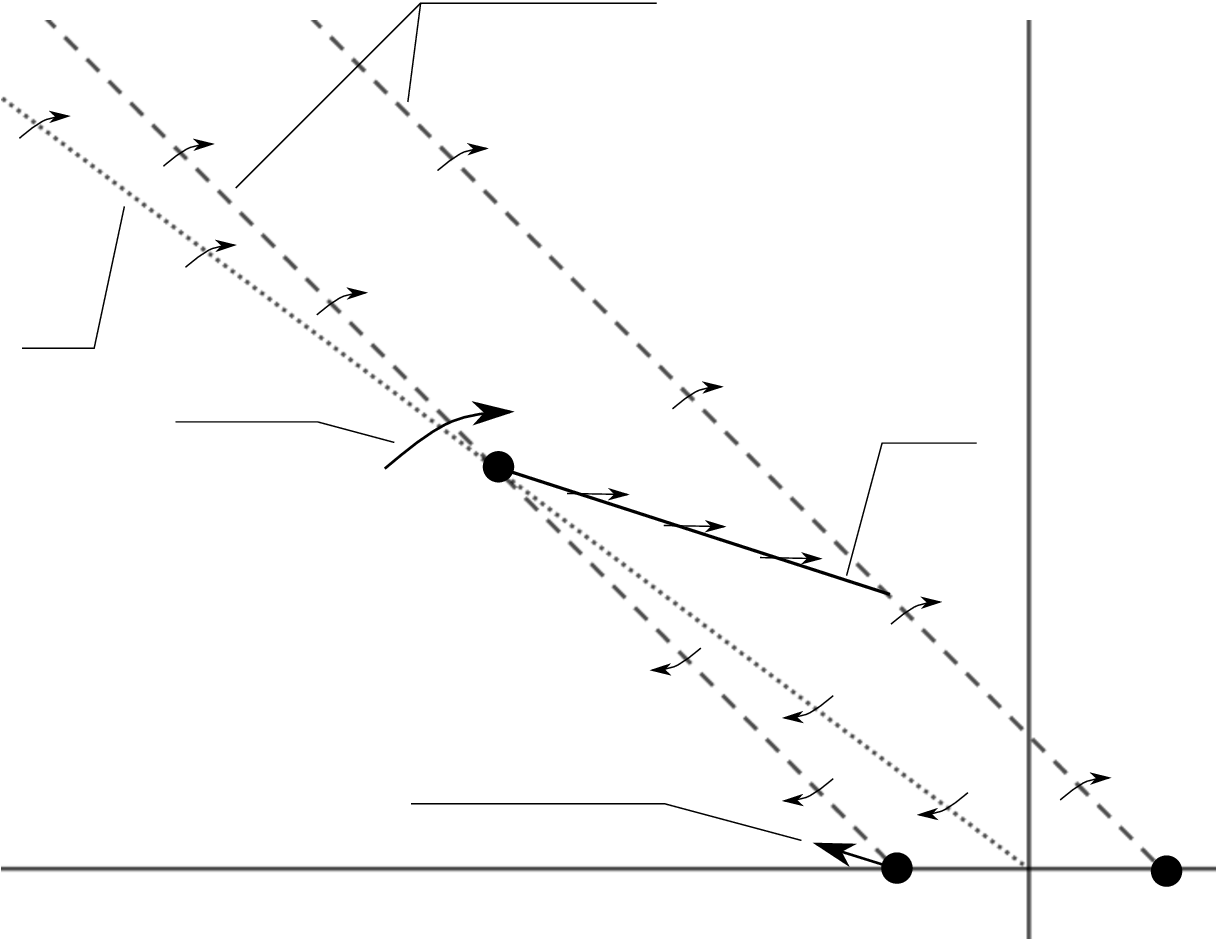} 
			\put(30.5,9.5){$6$}
			\put(74,1){$p^-$}
			\put(95,1){$p^+$}
			\put(43,39){$q^+$}
			\put(-4,46.5){$S_{x_1}$}
			\put(55,76){$S_{y_1}$}
			\put(11,40){$6$}
			\put(81,38.5){$l$}
		\end{overpic}
	\end{center}	
	\caption{Illustration of the flow of $X_1$ at the sets $S_{x_1}$ and $S_{y_1}$}\label{Field22bSpecial1}
\end{figure}
Taking $\beta\geqslant1+2\sqrt{-\alpha}+4\sqrt[4]{-\alpha}$ and defining the line 			
	\[l=q^++t\left(1,\frac{\alpha}{(-\alpha)^\frac{3}{4}-\alpha}\right)\]
one can conclude that the flow crosses $l$ upwards and thus separatrix $6$ must cross the line of $S_{y_1}$ which contains $p^+$. But at the right hand side of this line we have $\dot y_1>0$ and thus separatrix $6$ cannot end at $p^+$ and hence it must end at the north pole.

Second, we must prove that the relative position of the curves $\beta_1(\alpha)$ and $\beta_h(\alpha)=1+2\sqrt{-\alpha}$ is as it is shown in Figure~\ref{22b}, i.e. we must prove $\beta_1<\beta_h$ for $\alpha<0$ large enough and $\beta_h<\beta_1$ for $\alpha<0$ small enough. We remember that $\beta_1$ represents the moment when it exists a heteroclinic connection between both saddles. From now on we assume $(\alpha,\beta)=(\lambda,1+2\sqrt{-\lambda})$, $\lambda\leqslant0$, and we prove that for $\alpha<0$ large enough the connection already broke and thus $\beta_1<\beta_h$. Translating the saddle to the left of the origin and then doing a simple blow up in the $y$ direction one obtain the vector field $X_2=X_2(x_2,y_2)$ whose differential system is 
	\[\begin{array}{l}
		\dot x_2 = 2\lambda+\sqrt{-\lambda}x_2^2+3\sqrt{-\lambda}x_2y_2+2(1+\sqrt{-\lambda})y_2^2-\frac{1}{2}x_2^3y_2-x_2^2y_2^2-\frac{1}{2}x_2y_2^3, \vspace{0.2 cm} \\
		\dot y_2 = -\sqrt{-\lambda}x_2 y_2-\sqrt{-\lambda} y_2^2+\frac{1}{2} x_2^2 y_2^2+x_2 y_2^3+\frac{1}{2}y_2^4.
	\end{array}\]
In these coordinates set $S_{x_2}$ has a closed component. Let $p_1=(x_0,y_0)$ be the higher point in this closed component. Calculations shows that $p_1$ lies on the left of the straight line $r$ given by $x_2+y_2=-\sqrt{-\lambda}$. Let $p_2=(-y_0,y_0)$ be the projection of $p_1$ on $r$ and observe that $p_2$ is above the focus $p_3$, which lies in the same line. See Figure~\ref{Field22bSpecial2}. It follows from an analysis on the set $S_{x_2}$ that the separatrix $s$ must cross $r$ above the point $p_2$. Let $l$ be the segment given by $p_2+t\left(1,-\frac{1}{2}\right)$ such that it ends at the other component $r_1$ of $S_{y_2}$. For $\alpha<0$ large enough calculations shows that the flow on $l$ points upwards and thus separatrix $s$ must cross $r_1$. But at the right side of $r_1$ we have $\dot y_1 > 0$ and thus $s$ must end at the north pole. Therefore, for $\alpha<0$ large enough we have $\beta_1<\beta_h$.
\begin{figure}[ht]	
	\begin{center}
		\begin{overpic}[height=5cm]{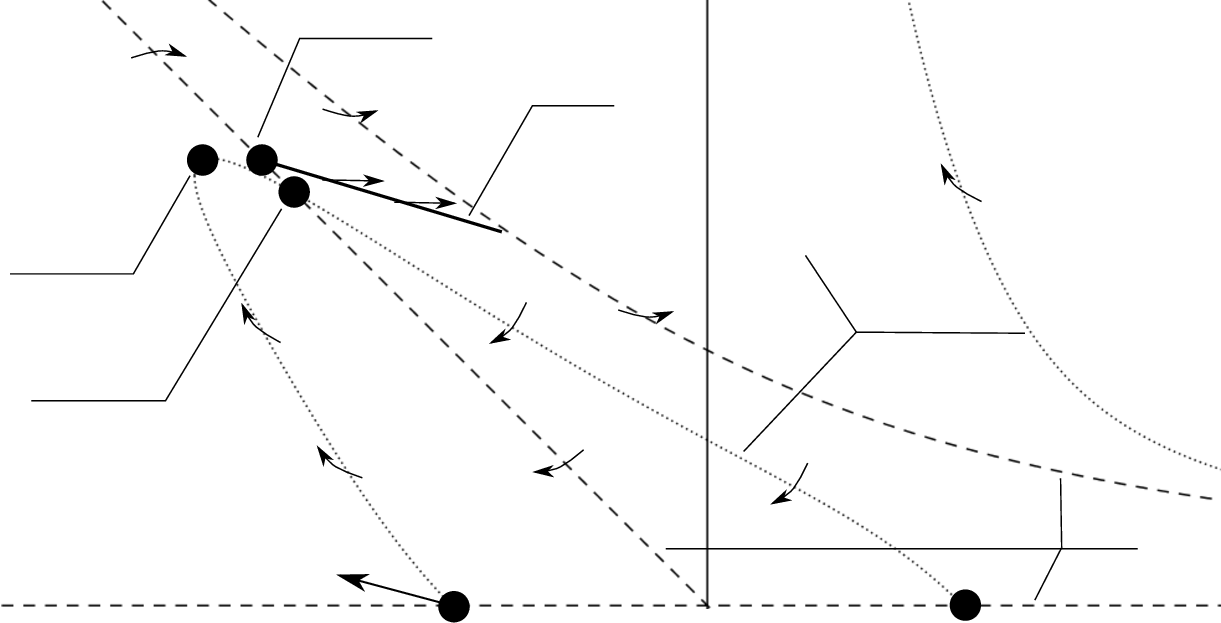} 
			\put(94,5){$S_{y_2}$}
			\put(64.5,30.5){$S_{x_2}$}
			\put(25,3){$s$}
			\put(51,41){$l$}
			\put(-3,28){$p_1$}
			\put(36,48){$p_2$}
			\put(-1,17.5){$p_3$}
			\put(5,50){$r$}
			\put(20,50){$r_1$}
			\put(58.5,50){$y$}
		\end{overpic}
	\end{center}	
	\caption{Illustration of the flow of $X_2$ at the sets $S_{x_2}$ and $S_{y_2}$.}\label{Field22bSpecial2}
\end{figure}
Once we cannot give $\beta_1$ explicitly and that there is nothing that prevents either $\beta_1<\beta_h$ or $\beta_h<\beta_1$ it turns out to be a very difficult task to understand completely and in a analytic way the relative position of $\beta_h$ and $\beta_1$. But numerical computations (see chapters $9$ and $10$ of \cite{DumLliArt2006}) shows that for $\lambda_0\approx-\frac{2}{3}$ is the unique intersection of $\beta_1$ and $\beta_h$. Moreover it also shows that $\beta_h<\beta_1$ if $\lambda\in(\lambda_0,0)$ and $\beta_1<\beta_h$ if $\lambda\in(-\infty,\lambda_0)$. So to provide an analytic proof of these facts is an open problem. \end{proof}

\section{System $X_{23}$}\label{sec5}

Let us remember that system $X_{23}$ is given by
	\[\dot x = a\alpha x y-y^3+ax^3y+axy^5, \quad \dot y = \frac{\beta}{2}+\frac{a}{2}x-\frac{\alpha}{2}y^2-\frac{1}{2}x^2y^2-\frac{1}{2}y^6,\]
with $a\in\{-1,1\}$.

Since this system has a particular higher degree than the previous systems it is not practical to find analytic expressions for the finite singularities as we did with the previous systems. Therefore, we change our approach. First we do the change of variables given by $(x,y)=(x_1,\sqrt{y_1})$, obtaining the vector field 
	\[\dot x_1 = \sqrt{y_1}(-y_1+x_1(x_1^2+y_1^2+\alpha)), \quad \dot y_1 = \sqrt{y_1}(x_1-y_1(x_1^2+y_1^2+\alpha)+\beta).\]
Dividing both equations by $\sqrt{y_1}$ we obtain the vector field $Y=(\dot x_2,\dot y_2)$ given by
\begin{equation}\label{3}
	\dot x_2 = -y_2+x_2(x_2^2+y_2^2+\alpha), \quad \dot y_2 = x_2-y_2(x_2^2+y_2^2+\alpha)+\beta.
\end{equation}
Observe that $X_{23}$ and $Y$ are topologically equivalent for $y>0$ (which is equivalent to $y_2>0$) and $Y$ has degree $3$ while $X_{23}$ has degree 6. Therefore, the approach is the following. We study the vector field $Y$ at $y_2>0$ and then draw conclusions for $X_{23}$ for $y>0$ (and thus for $y<0$ too due to its symmetry) and study locally the unique singularity $p=(-a\beta,0)$ of $X_{23}$ at the $x$-axis. From now on we focus on the case $a=1$.

\begin{proposition}\label{prop1X23}
	The following statement hold.
	\begin{enumerate}[label=(\alph*)]
		\item $p$ is a hyperbolic saddle if $\beta(\alpha+\beta^2)<0$ and a center if $\beta(\alpha+\beta^2)>0$.
		\item $p$ is a center if $\{\beta=0,\, -1\leqslant\alpha<1\}$ or $\{\alpha+\beta^2=0,\, |\beta|<1\}$.
		\item Otherwise the local phase portrait of $X_{23}$ at $p$ is given in Figure~\ref{23aOrigin}.
	\end{enumerate}
\begin{figure}[ht]
	\begin{multicols}{3}		
		\begin{center}
			\includegraphics[height=3cm]{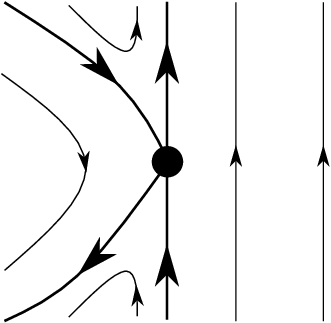}
			
			$\beta=0$ and $\alpha<-1$
		\end{center}		
		\columnbreak		
		\begin{center}
			\includegraphics[height=3cm]{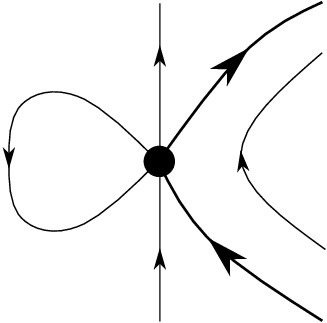}
			
			$\alpha+\beta^2=0$ and $|\beta|\geqslant1$
		\end{center}
		\columnbreak		
		\begin{center}
			\includegraphics[height=3cm]{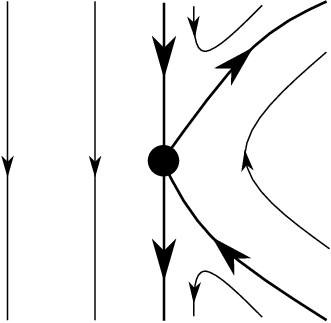}
			
			$\beta=0$ and $\alpha\geqslant1$
		\end{center}				
	\end{multicols}
	\caption{Local phase portraits of $X_{23}$ at $p$.}\label{23aOrigin}
\end{figure}
\end{proposition}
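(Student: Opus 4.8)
The plan is to settle the non-degenerate cases from the linear part and to treat the degeneracy locus $\beta(\alpha+\beta^2)=0$ by transporting the question to the cubic field $Y$ of \eqref{3}. First I would translate $p=(-\beta,0)$ to the origin and record
\[
	DX_{23}(p)=\left(\begin{array}{cc} 0 & -\beta(\alpha+\beta^2) \\ \frac{1}{2} & 0 \end{array}\right),
\]
whose eigenvalues satisfy $\lambda^2=-\frac{1}{2}\beta(\alpha+\beta^2)$. If $\beta(\alpha+\beta^2)<0$ they are real with opposite signs and $p$ is a hyperbolic saddle; if $\beta(\alpha+\beta^2)>0$ they are purely imaginary, and since $p$ is a symmetric singularity it cannot be a focus (\cite[Section~$2$]{Teixeira2}), hence it is a center. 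This proves statement $(a)$ and leaves the stratum $\beta(\alpha+\beta^2)=0$, that is $\beta=0$ or $\alpha=-\beta^2$, on which $DX_{23}(p)$ is nilpotent.

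The key remark is that $O=(-\beta,0)$ is a singularity of $Y$ \emph{exactly} on this stratum: putting $y_2=0$ in \eqref{3} gives $\dot x_2=x_2(x_2^2+\alpha)$ and $\dot y_2=x_2+\beta$, which vanish together only when $x_2=-\beta$ and $\beta(\alpha+\beta^2)=0$. I would then combine two facts: the map $(x,y)\mapsto(x,y^2)$ gives a topological equivalence between $X_{23}$ on $\{y>0\}$ and $Y$ on $\{y_2>0\}$; and, by the $\varphi$-reversibility of $X_{23}$, any orbit meeting the axis $\{y=0\}$ transversally at two distinct points is periodic (reflect the orbit across the axis and reverse time). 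Hence $p$ is a center if and only if, arbitrarily near $O$, the orbits of $Y$ in the upper half-plane are arcs with both endpoints on $\{y_2=0\}$, one on each side of $O$; that is, if and only if $O$ is a monodromic singularity of $Y$. Otherwise the orbits issue from $O$ and, pulled back by $y=\sqrt{y_2}$ and reflected across $\{y=0\}$, give the sectorial portraits of Figure~\ref{23aOrigin}. A pleasant point is that I never need to distinguish a true center of $Y$ from a weak focus of $Y$: monodromy alone forces a center for $X_{23}$.

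To test monodromy I would compute
\[
	DY(O)=\left(\begin{array}{cc} 3\beta^2+\alpha & -1 \\ 1 & -\beta^2-\alpha \end{array}\right), \quad \operatorname{tr}=2\beta^2, \quad \det=1-(3\beta^2+\alpha)(\beta^2+\alpha).
\]
On $\beta=0$ this is $\operatorname{tr}=0$, $\det=1-\alpha^2$, so the eigenvalues are complex (monodromic) exactly for $|\alpha|<1$ and form a saddle for $|\alpha|>1$; on $\alpha=-\beta^2$ it is $\operatorname{tr}=2\beta^2$, $\det=1$, so $O$ is an unstable focus for $|\beta|<1$ and an unstable node for $|\beta|>1$. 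In each monodromic range the reversibility closure produces a center, which is statement $(b)$; in each saddle or node range the pulled-back and reflected portrait is the corresponding panel of Figure~\ref{23aOrigin}, statement $(c)$. The separation between the $\alpha<-1$ and $\alpha\geqslant1$ panels comes from the eigendirections of the $Y$-saddle, which lie in the first and third quadrants for $\alpha>1$ but in the second and fourth for $\alpha<-1$, together with the sign of $\dot y_2=x_2+\beta$ on the axis.

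The main obstacle is the boundary of the monodromic region, where $DY(O)$ degenerates and the linear part is silent: the nilpotent points $\beta=0,\ \alpha=\pm1$ and the double--eigenvalue points $\alpha=-\beta^2,\ |\beta|=1$. There I would perform a quasi-homogeneous blow up of $Y$ at $O$ (Appendix~\ref{BlowUp}), and the delicate phenomenon to reproduce is the asymmetry built into statements $(b)$ and $(c)$: the blow up must show that $\{\beta=0,\ \alpha=-1\}$ is still monodromic, so that $\alpha=-1$ belongs to the center range, while $\{\beta=0,\ \alpha=1\}$ and $\{\alpha=-\beta^2,\ |\beta|=1\}$ are not and yield the remaining portraits of Figure~\ref{23aOrigin}. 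Carrying out these degenerate blow ups, and checking that the pulled-back sectors match correctly under the reflection $\varphi$, is where the real work lies.
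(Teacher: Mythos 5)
Your treatment of statement $(a)$ coincides with the paper's: the matrix $DX_{23}(p)$ together with the fact that a symmetric singularity cannot be a focus. For statements $(b)$ and $(c)$ you take a genuinely different route. The paper settles both at once by a quasihomogeneous blow up of $X_{23}$ itself at $p$, with weight $(2,1)$, carried out on the two strata $\beta=0$ and $\alpha+\beta^2=0$; you instead pass to the cubic field $Y$ of \eqref{3}, observe that $O=(-\beta,0)$ is a singularity of $Y$ exactly on those strata, and read off the answer from $DY(O)$ combined with the closure argument that reversibility turns every orbit of $X_{23}$ crossing $\{y=0\}$ transversally twice into a periodic orbit. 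Where it applies this is a real gain: the open parts of the strata are settled by linear algebra with no blow up at all, and you never need to distinguish a center of $Y$ from a weak focus of $Y$, since monodromy of $O$ alone forces a center of $X_{23}$ at $p$.

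The genuine gap is at the nilpotent points $(\alpha,\beta)=(\pm1,0)$. These are not boundary trivia: they are exactly where the asymmetry of the proposition is decided, namely that $\alpha=-1$ belongs to the center range of statement $(b)$ while $\alpha=1$ produces the right panel of Figure~\ref{23aOrigin} in statement $(c)$. For them your proposal only says that a quasi-homogeneous blow up of $Y$ at $O$ ``must show'' monodromy in one case and not in the other, with no weights chosen and no exceptional divisor analyzed; so as written, statements $(b)$ and $(c)$ are proved only on the open parts of the strata, and the part of the claim that carries the real content remains unproved. Two smaller remarks. First, the double-eigenvalue points $(\alpha,\beta)=(-1,\pm1)$ do not in fact need a blow up: there $DY(O)=\bigl(\begin{smallmatrix}2 & -1\\ 1 & 0\end{smallmatrix}\bigr)$ has the double eigenvalue $1$, so $O$ is still a hyperbolic (improper) unstable node whose orbits all reach $O$ tangent to the eigendirection $(1,1)$, and your node argument covers $|\beta|\geqslant1$, not just $|\beta|>1$. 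Second, the equivalence ``$p$ is a center if and only if $O$ is monodromic'' is an overclaim: a non-monodromic $O$ whose characteristic orbits all lie in $\{y_2\leqslant0\}$ would still give a center of $X_{23}$. Your argument never uses the false direction --- statement $(b)$ needs only that monodromy implies a center, and statement $(c)$ exhibits explicit characteristic orbits in $\{y_2>0\}$ --- but the missing blow ups must be phrased accordingly: at $(-1,0)$ what has to be shown is that no characteristic orbit of $Y$ enters the open upper half-plane, and at $(1,0)$ the characteristic orbits in $\{y_2>0\}$ and their sectors must be computed to match the figure.
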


\begin{proof} Statement $(a)$ follows from 
	\[DX_{23}(p)=\left( \begin{array}{cc} 0 & -\beta(\alpha+\beta^2) \\ \frac{1}{2} & 0 \end{array}\right),\]
and the fact that $p$ is a symmetric singularity. Statements $(b)$ and $(c)$ follow from a quasihomogeneous blow up with weight $(2,1)$. \end{proof}

Let us define the following functions.
\begin{equation}\label{2}
	\begin{array}{rl}
		p_1(x_2)=&2x_2^2+\beta x_2+\alpha. \vspace{0.2cm} \\
		p_2(x_2)=&4x_2^5+5\beta x_2^4+(4\alpha+\beta^2)x_2^3+2\alpha\beta x_2^2+(\alpha^2-1)x_2-\beta; \vspace{0.2cm} \\
		f(x_2)=&\sqrt{x_2(x_2+\beta)}. \vspace{0.2cm} \\
		R(\alpha,\beta)=&-256(\alpha^2-1)^3-192\alpha(\alpha^4+7\alpha^2+28)\beta^2 \vspace{0.2cm} \\ 
		&+60(\alpha^4-28\alpha^2-72)\beta^4-4\alpha(\alpha^2-108)\beta^6-27\beta^8.
	\end{array}
\end{equation}

In what follows we list some analytic properties of the finite singularities of $Y$ when $y_2>0$.

\begin{proposition}\label{prop2X23}
		Let $q=(x_0,y_0)\in\mathbb{R}^2$ such that $y_0>0$. The following statements hold.
	\begin{enumerate}[label=(\alph*)]
		\item $q$ is a finite singularity of $Y$ if, and only if, 
			\[y_0=\frac{x_0+\beta}{p_1(x_0)}=x_0 p_1(x_0)=f(x_0).\]
		\item If $q$ is a finite singularity of $Y$, then $p_2(x_0)=0$ and $x_0 p_1(x_0)>0$.
		\item If $q_0=(x_0,f(x_0))$ is such that $p_2(x_0)=0$ and $x_0p_1(x_0)>0$, then $q_0$ is a finite singularity of $Y$.
		\item If $q$ is a non-hyperbolic finite singularity of $Y$, then $\beta=0$ or $R(\alpha,\beta)=0$.
	\end{enumerate}
\end{proposition}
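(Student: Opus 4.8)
The plan is to derive everything from the defining equations $\dot x_2=0$ and $\dot y_2=0$ of the system \eqref{3}, treating the polynomials in \eqref{2} as the algebraic shadow of these conditions. First I would establish statement $(a)$ directly. A point $q=(x_0,y_0)$ with $y_0>0$ is a singularity of $Y$ precisely when $-y_0+x_0(x_0^2+y_0^2+\alpha)=0$ and $x_0-y_0(x_0^2+y_0^2+\alpha)+\beta=0$. From the first equation, whenever $x_0\neq0$ one can solve $x_0^2+y_0^2+\alpha=y_0/x_0$, and substituting into the second gives $x_0-y_0\cdot(y_0/x_0)+\beta=0$, i.e.\ $x_0^2+\beta x_0-y_0^2=0$, so that $y_0^2=x_0(x_0+\beta)$ and hence $y_0=f(x_0)$. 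Feeding $x_0^2+y_0^2+\alpha=2x_0^2+\beta x_0+\alpha=p_1(x_0)$ back into the first equation yields $y_0=x_0 p_1(x_0)$; and solving the second for $y_0$ gives $y_0=(x_0+\beta)/p_1(x_0)$ (valid since $p_1(x_0)\neq0$ as $y_0>0$). Conversely these three equalities force both defining equations. The case $x_0=0$ I would dispatch separately and check it is consistent with the stated formulas.

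For statement $(b)$, the two expressions $y_0=x_0p_1(x_0)$ and $y_0=(x_0+\beta)/p_1(x_0)$ give, upon equating, $x_0 p_1(x_0)^2=x_0+\beta$, and multiplying out $p_1(x_0)^2=(2x_0^2+\beta x_0+\alpha)^2$ produces exactly the quintic $p_2(x_0)=0$ after collecting terms; I expect the coefficients to match those in \eqref{2} verbatim, so the verification is a routine expansion. The positivity $x_0p_1(x_0)>0$ is immediate from $y_0=x_0p_1(x_0)$ together with $y_0>0$. Statement $(c)$ is the converse: given $p_2(x_0)=0$ and $x_0p_1(x_0)>0$, I would set $y_0:=f(x_0)=\sqrt{x_0(x_0+\beta)}$ and show that the three equalities of part $(a)$ hold, so $q_0$ is a genuine singularity; here one must confirm that $p_2(x_0)=0$ is exactly the compatibility condition making $x_0p_1(x_0)$ and $(x_0+\beta)/p_1(x_0)$ agree and both equal $f(x_0)$, which is where the factorization underlying $p_2$ is used.

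Statement $(d)$ is the delicate one and I expect it to be the main obstacle. The idea is that a singularity $q$ is non-hyperbolic when $\det DY(q)=0$ or $\operatorname{tr}DY(q)=0$ at a point where $\det=0$; writing out $DY$ from \eqref{3} and restricting to the curve $y_0=f(x_0)$, the degeneracy condition becomes a polynomial relation in $(x_0,\alpha,\beta)$. The claim is that eliminating $x_0$ between this degeneracy relation and $p_2(x_0)=0$ yields the resultant $R(\alpha,\beta)$ (up to the factor $\beta$), so that non-hyperbolicity forces $R(\alpha,\beta)=0$ or $\beta=0$. Concretely I would compute $\operatorname{Res}_{x_2}\bigl(p_2(x_2),\,\tfrac{d}{dx_2}p_2(x_2)\bigr)$, i.e.\ the discriminant of $p_2$, which vanishes exactly when $p_2$ has a repeated root, corresponding to a saddle-node-type degeneracy; the hope is that this discriminant equals $R(\alpha,\beta)$ times a power of $\beta$. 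The hard part will be twofold: first, matching the large explicit coefficients of $R$ with the output of the resultant computation, which is only feasible with a computer algebra check rather than by hand; and second, accounting for the other mechanism of non-hyperbolicity (a trace-zero degeneracy on the $y_2>0$ locus) and showing it too is captured by $R=0$ or $\beta=0$, so that no degenerate case escapes the two listed conditions. I would therefore organize $(d)$ by separating the determinant-vanishing (double-root) case from the trace-vanishing case and verifying each lands in $\{\beta=0\}\cup\{R=0\}$.
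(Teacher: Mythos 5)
Your proposal is correct and takes essentially the same approach as the paper: parts (a)--(c) rest on exactly the algebra you describe (the paper's own proof uses the identity $p_2(x_0)=x_0p_1(x_0)^2-(x_0+\beta)$ that you derive, though when you expand you will get $4\beta x_0^4$ rather than the $5\beta x_2^4$ printed in \eqref{2}, which is a typo in the paper), and part (d) is handled by the same trace/determinant split combined with the resultant of $p_2$ and $p_2'$. The computation you fear in (d) collapses, exactly as the paper shows: restricted to the singularity locus one has the exact identities $Tr(x_0)=-2x_0\beta$ and $Det(x_0)=-p_2'(x_0)$, so trace-vanishing forces $\beta=0$ (since $x_0\neq0$), and determinant-vanishing forces $p_2$ and $p_2'$ to share the root $x_0$, hence $R(\alpha,\beta)=0$, with no discriminant expansion or coefficient matching needed beyond the paper's standing assertion that $R$ equals $\operatorname{Res}_{x_2}(p_2,p_2')$ up to a constant.
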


\begin{proof} Isolating $u=x_2^2+y_2^2+\alpha$ from $\dot x_2=0$ and $\dot y_2=0$ we see that a necessary condition for a point $(x_0,y_0)$ be a singularity of $Y$ is that it satisfies
	\[\frac{y_0}{x_0}=\frac{x_0+\beta}{y_0}.\]
Knowing that $y_0>0$ we obtain $y_0=f(x_0)$. Statement $(a)$ now follows from $Y(x_0,f(x_0))=(0,0)$.

From statement $(a)$ one concludes that if $q$ is a finite singularity of $Y$, then $x_0 p_1^2(x_0)-(x_0+\beta)=0$ and $x_0 p_1(x_0)=f(x_0)\geqslant0$. Statement $(b)$ now follows from the fact that $x_0\neq0$ and $p_2(x_0)=x_0 p_1^2(x_0)-(x_0+\beta)$.

To prove statement $(c)$ first observe that $p_2(x_0)=0$ implies $x_0p_1(x_0)^2=x_0+\beta$ and thus
	\[x_0p_1(x_0)^2=x_0+\beta=\frac{f(x_0)^2}{x_0}.\]
Hence, $x_0^2p_1(x_0)^2=f(x_0)^2$. Squaring both sides and knowing that $x_0p_1(x_0)>0$ we obtain statement $(c)$.

Now observe that the Jacobian matrix of $Y$ is given by
	\[DY(x_2,y_2)=\left(\begin{array}{cc} 3x_2^2+y_2^2+\alpha & 2x_2y_2-1 \\ 1-2x_2y_2 & -(x_2^2+3y_2^2+\alpha)\end{array}\right).\]
Replacing $y_0=f(x_0)$ and calculating the trace and the determinant and then replacing $f(x_0)=x_0 p_1(x_0)$ we obtain
\begin{equation}\label{4}
	Tr(x_0)=-2x_0\beta, \quad Det(x_0)=-p_2'(x_0).
\end{equation}

It follows from the \emph{Trace-Determinant Characterization} (see Section 4.1 of \cite{Devaney}) that a singularity $q$ is non-hyperbolic if $Tr(x_0)=0$, $Det(x_0)=0$ or both. Therefore, if $q=(x_0,y_0)$, $y_0>0$, is a singularity, then it follows from \eqref{3} that $x_0\neq0$ and thus $Tr(x_0)=0$ if, and only if, $\beta=0$. Moreover, since $R(\alpha,\beta)$ is the \emph{resultant} (except by a constant) with respect to the variable $x_2$ between $p_2$ and $p'_2$ it follows from \eqref{4} and from statement $(b)$ that $Det(x_0)=0$ if, and only if, $R(\alpha,\beta)=0$. \end{proof}

In what follows we list some properties of $R(\alpha,\beta)$ (see \eqref{2}) and the parabola $\alpha+\beta^2=0$.

\begin{proposition}
	The following statement holds.
	\begin{enumerate}[label=(\alph*)]
		\item $R(\alpha,\beta)=0$ has four branches (two positives and two negatives) if $\alpha\leqslant-1$, two branches (one positive and one negative) if $-1<\alpha\leq1$ and no branches at all if $\alpha>1$.
		\item  Let $\beta_+$ be the negative branch that borns at $\alpha=1$ and $\beta_-$ the negative branch that borns at $\alpha=-1$. Then $\mu_1\approx(-3.398..,-1.849..)$ is the unique intersection between $\beta_+$ and $\beta_-$.
		\item $\mu_2\approx(-3.389..,-1.841..)$ is the unique intersection between the para\-bola $\alpha+\beta^2=0$ and $R(\alpha,\beta)=0$ at $\beta<0$. Moreover it occurs at $\beta_-$. See Figure~\ref{Field23aSpecial2}.
	\end{enumerate}
\begin{figure}[ht]	
	\begin{center}
	\begin{overpic}[width=10cm]{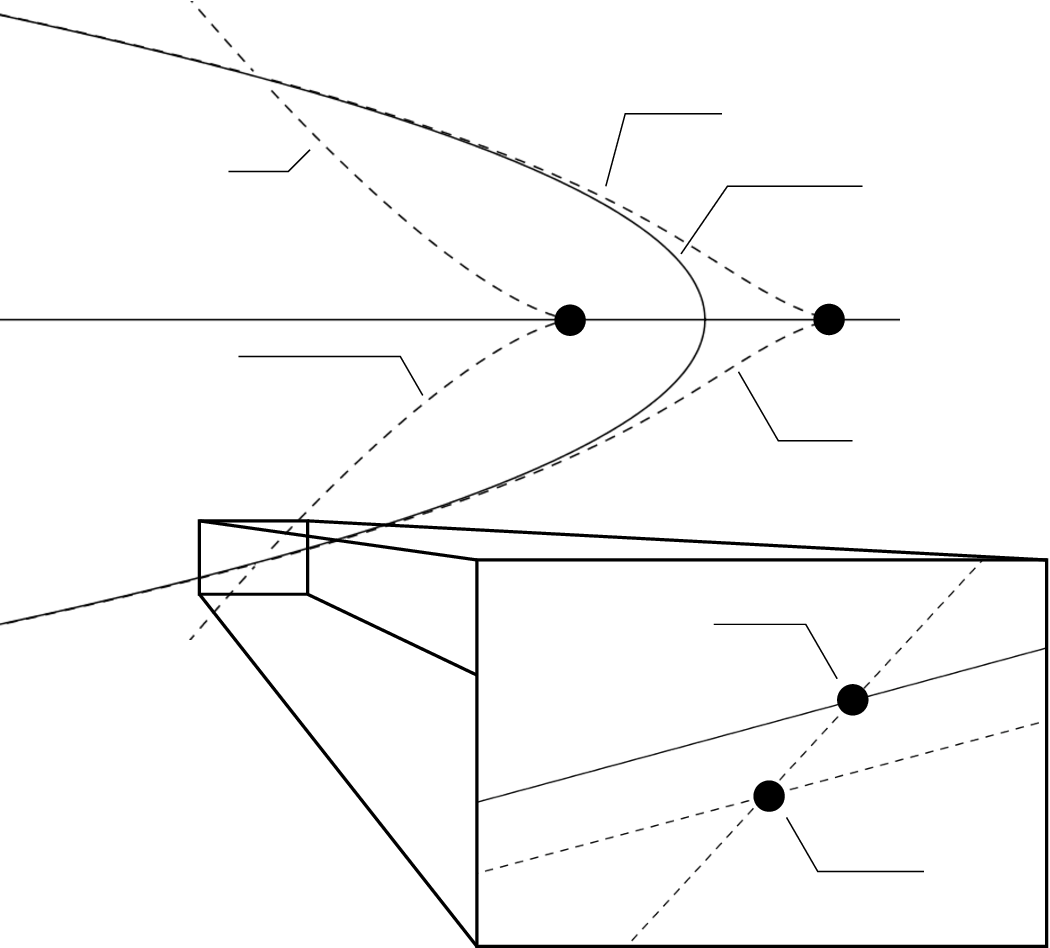} 
		\put(1,61){$\beta=0$}
		\put(56,60.5){$\alpha=-1$}
		\put(81,60.5){$\alpha=1$}
		\put(73,73.5){$\alpha+\beta^2=0$}
		\put(62,80.5){$-\beta_+$}
		\put(15.5,72.5){$-\beta_-$}
		\put(82,47.5){$\beta_+$}
		\put(20,55){$\beta_-$}
		\put(63.5,30){$\mu_2$}
		\put(88.5,6.5){$\mu_1$}
	\end{overpic}
	\end{center}	
	\caption{Plot of the sets $R(\alpha,\beta)=0$ and $\alpha+\beta^2=0$.}\label{Field23aSpecial2}
\end{figure}
\end{proposition}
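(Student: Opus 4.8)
The plan is to analyze the curve $R(\alpha,\beta)=0$ as an algebraic curve and extract the qualitative information in statement $(a)$, then pin down the intersection points $\mu_1$ and $\mu_2$ in statements $(b)$ and $(c)$ by combining elimination theory with numerical root isolation. For statement $(a)$, I would fix $\alpha$ and regard $R(\alpha,\beta)=0$ as a polynomial equation in $\beta$. Observe that $R$ is an even polynomial of degree $8$ in $\beta$ (only even powers of $\beta$ appear), so it is convenient to set $w=\beta^2\geqslant 0$ and write $R(\alpha,\beta)=\tilde R(\alpha,w)$, a degree-$4$ polynomial in $w$. Each positive root $w>0$ yields two branches $\beta=\pm\sqrt{w}$ (one positive, one negative), while a root $w=0$ gives the single branch $\beta=0$ and negative roots give no real branches. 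Thus counting branches reduces to counting positive roots of $\tilde R(\alpha,\cdot)$ as $\alpha$ varies. The values $\alpha=\pm 1$ appear naturally: at $\alpha=\pm 1$ the constant term $-256(\alpha^2-1)^3$ of $\tilde R$ (as a polynomial in $w$, this is the value at $w=0$) vanishes, which is exactly where branches appear or disappear at $\beta=0$. I would track the sign of $\tilde R(\alpha,0)=-256(\alpha^2-1)^3$ and the number of sign changes/positive roots using Descartes' rule of signs or a Sturm sequence on the regions $\alpha\leqslant -1$, $-1<\alpha\leqslant 1$, and $\alpha>1$, which should yield four, two, and zero positive roots respectively.

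For statements $(b)$ and $(c)$, the strategy is to characterize the relevant intersections as common zeros of two polynomials and then isolate them. For $(c)$, the intersection of the parabola $\alpha+\beta^2=0$ with $R(\alpha,\beta)=0$ is obtained by substituting $\alpha=-\beta^2$ into $R$, producing a one-variable polynomial $R(-\beta^2,\beta)$ in $\beta$ whose negative real roots I would isolate; establishing uniqueness of the relevant root (for $\beta<0$) amounts to showing this polynomial has exactly one negative root in the region of interest, again via a sign-variation or Sturm argument, and then confirming numerically that it lands on $\beta_-$ rather than $\beta_+$. For $(b)$, the intersection of the two negative branches $\beta_+$ and $\beta_-$ is more delicate because these are distinct branches of the \emph{same} curve $R=0$, so they meet precisely at a singular point of the curve where $R=0$ and $\partial R/\partial\beta=0$ simultaneously (a point where two sheets cross). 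I would therefore compute the system $R(\alpha,\beta)=\partial_\beta R(\alpha,\beta)=0$, eliminate one variable via the resultant to get a single polynomial condition, isolate its roots, and verify numerically that there is exactly one such crossing with $\beta<0$ lying on both $\beta_+$ and $\beta_-$.

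The main obstacle will be the two uniqueness claims in $(b)$ and $(c)$, rather than the branch count in $(a)$. Statement $(a)$ is essentially a routine real-root count for a quartic in $w$, controlled by explicit sign data at $\alpha=\pm 1$. By contrast, proving that $\mu_1$ and $\mu_2$ are the \emph{unique} intersections requires ruling out other real solutions of high-degree elimination polynomials globally over the relevant parameter region, and the numerical values $(-3.398\ldots,-1.849\ldots)$ and $(-3.389\ldots,-1.841\ldots)$ are so close together that any argument must carefully distinguish them. Since $R$ is an explicit degree-$8$ polynomial with large integer coefficients, I expect that a fully analytic uniqueness proof is impractical, and the honest approach is to combine an analytic localization of the roots (bounding the region where a crossing can occur using the monotonicity/sign behavior of the branches established in $(a)$) with rigorous numerical root isolation (for instance interval arithmetic or a Sturm sequence restricted to an explicit box), exactly in the spirit of the numerical computations invoked elsewhere in the paper for $X_{22}$. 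This lets us assert uniqueness within a provably exhaustive search region without attempting to factor the resultant by hand.
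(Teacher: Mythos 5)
Your overall skeleton matches the paper's: reduce to a quartic via the even-in-$\beta$ substitution, detect branch crossings through a double-root condition, and restrict $R$ to the parabola for $\mu_2$. But there is a genuine soft spot in your plan for statement $(a)$: Descartes' rule of signs cannot close the argument. For instance, for $\alpha$ slightly larger than $1$ the coefficient signs of the quartic $\tilde R(\alpha,\cdot)$ give two sign changes, so Descartes yields ``two or zero'' positive roots --- exactly the ambiguity you need to resolve --- and a Sturm sequence with a free parameter $\alpha$ is not practicable. What is missing is a reason why the positive-root count is \emph{constant} on each of the three $\alpha$-regions; roots can appear or disappear not only through $w=0$ (which you do track via the constant term $-256(\alpha^2-1)^3$) but also by real roots colliding and moving into the complex realm, which happens precisely at zeros of the discriminant. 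The paper supplies exactly this ingredient: it computes the discriminant $D(\alpha)$ of the quartic $R_1(\alpha,b)$, shows $D(\alpha)\leqslant 0$ with equality only at $\alpha=\pm 3.398..$ (so there are always exactly two real and two complex roots), and then uses the product identity $27\,b_1b_2b_3b_4=256(\alpha^2-1)^3$ to get $\operatorname{sign}(b_1b_2)=\operatorname{sign}(\alpha^2-1)$, so that sign changes of the real roots can only occur at $\alpha=\pm1$; sample checks at $\alpha=\pm2$ then fix the counts. You should incorporate this (or an equivalent root-collision control) to make $(a)$ rigorous.

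For $(b)$ and $(c)$ your route is workable but heavier than necessary, and your pessimism about analyticity is not borne out by the paper. Your singular-point condition $R=\partial_\beta R=0$ for $(b)$ is equivalent to the vanishing of the discriminant already computed in $(a)$, so uniqueness of $\mu_1$ does not require a global numerical search: crossings of $\beta_+$ and $\beta_-$ force a double positive root of $R_1$, hence $D(\alpha)=0$, hence $\alpha=\pm3.398..$; the case $\alpha>1$ is excluded because there are no positive roots there, and substituting $\alpha=-3.398..$ into $R_1$ exhibits the double positive root. For $(c)$, note that substituting $\beta^2=-\alpha$ (rather than $\alpha=-\beta^2$, which inflates the degree to $12$) collapses $R$ to the explicit biquadratic $256+288\alpha^2-27\alpha^4$, whose unique root with $\alpha\leqslant-1$ is $\alpha_0=-\tfrac{4}{3}\sqrt{3+2\sqrt{3}}$ in closed form --- no interval arithmetic needed. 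Your worry about distinguishing the nearby points $\mu_1$ and $\mu_2$ also dissolves in this formulation: they are roots of two different one-variable polynomials ($D(\alpha)$ and the biquadratic), and the assignment of $\mu_2$ to the branch $\beta_-$ follows, as in the paper, from the relative position of $\mu_1$, $\mu_2$ and the parabola together with continuity of the branches, not from numerics.
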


\begin{proof} Fortunately $R(\alpha,\beta)$ is biquadratic at $\beta$ and thus we can do the change of variables $\beta\mapsto\pm\sqrt{b}$ (both give the same result) and obtain the quartic polynomial in $b$ 
	\[\begin{array}{rl}
		R_1(\alpha,b)=&-256(\alpha^2-1)^3-192\alpha(\alpha^4+7\alpha^2+28)b \vspace{0.2cm} \\ 
		&+60(\alpha^4-28\alpha^2-72)b^2-4\alpha(\alpha^2-108)b^3-27b^4.
	\end{array}\]
Given $\alpha_0\in\mathbb{R}$ fixed we want to know how many positive real roots the polynomial $R_1(\alpha_0,b)$ has. We recall that if
	\[P(x)=a_4x^4+a_3x^3+a_2x^2+a_1x+a_0\]
is a real polynomial of degree four and $r_1$, $r_2$, $r_3$ and $r_4$ are its roots, then its discriminant (see \cite{Dickson} p. 157) is given by
	\[a_4^6\prod_{i<j}(r_i-r_j)^2.\]
The discriminant in $b$ of $R_1$ is given (see chapter $12$ of \cite{Dickson} for a expression of the discriminant in function of the coefficients), except by a constant, by
	\[\begin{array}{rl}
		D(\alpha)=& -1\;871\;773\;696-10\;034\;479\;104\;\alpha^2-19\;980\;402\;688\;\alpha^4 \vspace{0.2cm} \\ 
		&-17\;398\;321\;152\;\alpha^6-5\;393\;464\;832\;\alpha^8+250\;599\;680\;\alpha^{10} \vspace{0.2cm} \\ 
		&+64\;492\;352\;\alpha^{12}+870\;480\;\alpha^{14}-190\;633\;\alpha^{16}-9\;567\;\alpha^{18} \vspace{0.2cm} \\
		&-167\;\alpha^{20}-\alpha^{22}.
	\end{array}\]
Computations shows that $D(\alpha)\leqslant0$ with the equality happening only if $\alpha=\pm3.398..$. It is well known (see \cite{Dickson} p. 45) that if $D<0$, then $R_1$ has two distinct real roots and two complex conjugate roots. If $\{b_1,b_2,b_3,b_4\}$ are the four roots in $b$ of $R_1$, then 
	\[R_1(\alpha,\beta)=-27(b-b_1)(b-b_2)(b-b_3)(b-b_4)\]
and thus $27b_1b_2b_3b_4=256(\alpha^2-1)^3$. Supposing that $b_1$, $b_2$ are the real solutions and $b_3$, $b_4$ the complex solutions we conclude that $sign(b_1b_2)=sign(\alpha^2-1)$. Hence, there is a unique positive solution when $-1<\alpha<1$. Moreover $b_1$ and $b_2$ can change signal only if $\alpha=\pm1$. Choosing arbitrarily $\alpha=\pm2$ we see that there is no positive solutions for $\alpha=2$ and there is two positive real solutions for $\alpha=-2$. Hence we conclude that there is no branch of positive real solutions of $R_1$ if $\alpha>1$; one branch if $-1<\alpha\leqslant1$ and two branches if $\alpha\leqslant-1$. Squaring those branches and then reflecting them at the $x$-axis one can conclude statement $(a)$. For more details about the nature of the roots of a polynomial of degree four see \cite{Quartic}.

If the two branches $\beta_\pm$ intersect each other, then we have a double positive real solution of $R_1$ which requires $D(\alpha)=0$ and thus $\alpha=-3.398..$. Replacing this at $R_1$ one can see that there is two complex conjugate solutions and a double positive real root and thus we have statement $(b)$.

Knowing that $R(\alpha,-\sqrt{-\alpha})=256+288\alpha^2-27\alpha^4$ one can calculate its roots and see that the unique root $\alpha_0\leqslant-1$ is given by $\alpha_0=-\frac{4}{3}\sqrt{3+2\sqrt{3}}$ and thus $\mu_2=(\alpha_0,-\sqrt{-\alpha_0})$ is the unique intersection between the sets $\alpha+\beta^2=0$ and $R(\alpha,\beta)=0$. The relative position of $\mu_1$, $\mu_2$ and the parabola $\alpha+\beta^2=0$ and the fact that $\beta_\pm$ are the graphs of some continuous function implies that $\mu_2\in\beta_-$ and thus we have statement $(c)$. \end{proof}

Numerical computations show that the branches $-\beta_\pm$ of $R(\alpha,\beta)=0$ perturb the double roots $x_0$ of $p_2$ which do not satisfy $x_0p_1(x_0)>0$ and thus perturb the roots that are not related to the finite singularities of $Y$ and thus can be ignored. Moreover, the negative branches $\beta_\pm$ perturb the double roots $x_0$ of $p_2$ which do satisfy $x_0p_1(x_0)>0$ and thus perturb the roots that are related to the finite singularities. One can now draw the backbone of the bifurcation diagram of $X_{23}$, with $a=1$, and thus obtain the solid lines of Figure~\ref{23a1}.

\begin{proposition}
	Let $(\alpha_0,\beta_0)\in\mathbb{R}^2$ such that $\beta_0=\beta_\pm(\alpha_0)$ (i.e. it is a point in one of the brunches $\beta_\pm$), with $\beta_0<0$, and $\gamma(t)=(t+\alpha_0,\beta_0)$, $|t|<\varepsilon$, a transversal segment through $\beta_\pm$. Then a saddle-node bifurcation happens at $\gamma(0)$ in such way that it vanishes when $t$ increases.
\end{proposition}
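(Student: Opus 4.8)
The plan is to reduce everything to the quintic $p_2$ from \eqref{2}, since by Proposition~\ref{prop2X23} the finite singularities of $Y$ in the half-plane $y_2>0$ are in bijection with the roots $x_0$ of $p_2$ satisfying $x_0p_1(x_0)>0$, each corresponding to $q_0=(x_0,f(x_0))$. Being on a branch $\beta_\pm$ at $(\alpha_0,\beta_0)$ means $R(\alpha_0,\beta_0)=0$, so $p_2(\,\cdot\,;\alpha_0,\beta_0)$ has a double root $x_0$ with $x_0p_1(x_0)>0$; in particular $p_2'(x_0)=0$. By \eqref{4} this yields $Det(x_0)=-p_2'(x_0)=0$, while $Tr(x_0)=-2x_0\beta_0\neq0$ because $x_0\neq0$ (as $x_0p_1(x_0)>0$) and $\beta_0<0$. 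Hence at $\gamma(0)$ the singularity $q_0$ of $Y$ has exactly one zero eigenvalue and one nonzero eigenvalue, the configuration required for a saddle-node. The first step is to record this and to carry the conclusion back to $X_{23}$ via the equivalence of $Y$ and $X_{23}$ for $y>0$ (and the reflection $\varphi(x,y)=(x,-y)$ for the mirror singularity with $y<0$).

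The second step is to unfold the double root along $\gamma$. Fixing $\beta=\beta_0$ and setting $\alpha=\alpha_0+t$, the singularities of $Y$ near $q_0$ are governed by the roots of $p_2(x;\alpha_0+t,\beta_0)$ near $x_0$, so that $\partial/\partial t=\partial/\partial\alpha$. A direct differentiation of \eqref{2} gives
\[\frac{\partial p_2}{\partial\alpha}(x_0)=4x_0^3+2\beta_0x_0^2+2\alpha_0x_0=2x_0\,p_1(x_0)>0.\]
Since the tangent to the discriminant locus $R=0$ at $(\alpha_0,\beta_0)$ is proportional to $\bigl(\partial_\beta p_2(x_0),-\partial_\alpha p_2(x_0)\bigr)$, the inequality $\partial_\alpha p_2(x_0)\neq0$ shows that the horizontal segment $\gamma$ is automatically transversal to $\beta_\pm$, matching the hypothesis. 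Expanding $p_2(x_0+\xi;\alpha_0+t,\beta_0)=\frac{1}{2}p_2''(x_0)\xi^2+2x_0p_1(x_0)\,t+O(\xi^3,\xi t,t^2)$ shows that the equation $p_2=0$ has two real roots $x_0+\xi$ precisely when $t/p_2''(x_0)<0$ and none when $t/p_2''(x_0)>0$; by continuity both roots still satisfy $x_0p_1>0$, so they are genuine singularities of $Y$. At these two roots $p_2'$ has opposite signs (a simple zero of $p_2'$ sits at the double root), so by \eqref{4} one of them has $Det<0$ and is a saddle while the other has $Det>0$; since $Tr$ stays bounded away from $0$ the discriminant $Tr^2-4Det$ is positive there, so the latter is a node. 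This exhibits the collision and annihilation of a saddle and a node. Equivalently, the two inequalities $p_2''(x_0)\neq0$ and $\partial_\alpha p_2(x_0)\neq0$ are exactly the nondegeneracy and transversality hypotheses of the classical saddle-node bifurcation theorem.

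The only remaining point, and the genuine obstacle, is to pin down the direction, i.e. to prove that the pair exists for $t<0$ and disappears for $t>0$. By the expansion above this is equivalent to $p_2''(x_0)>0$, that is, to $x_0$ being a local minimum of $p_2$ (geometrically: since $\partial_\alpha p_2(x_0)>0$, increasing $\alpha$ lifts the graph of $p_2$ near $x_0$, which destroys the pair of crossings exactly when $x_0$ is a minimum). I would establish $p_2''(x_0)>0$ by a continuity argument along each branch: the sign of $p_2''(x_0)$ can change only at a parameter where $x_0$ becomes a triple root of $p_2$, i.e. at a cusp of the curve $R=0$, and one checks that no such cusp lies on the negative branches $\beta_\pm$ in the relevant range (the self-intersection $\mu_1$ is an ordinary node of $R=0$, not a cusp). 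It then suffices to compute the sign of $p_2''$ at a single convenient point of $\beta_+$ and of $\beta_-$---where the double root can be located explicitly or numerically, as is already done elsewhere in the paper---to conclude $p_2''(x_0)>0$ throughout, and hence that the saddle-node pair vanishes as $t$ increases. This is also consistent with the global picture: increasing $\alpha$ pushes $(\alpha,\beta)$ toward the region $\alpha>1$, where $R=0$ has no branches and the corresponding pair of singularities is absent.
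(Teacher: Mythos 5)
Your proposal is correct and follows essentially the same route as the paper: both reduce the bifurcation to the root structure of the quintic $p_2$, using Proposition~\ref{prop2X23} to identify the double root $x_0$ with $x_0p_1(x_0)>0$ as a non-hyperbolic singularity, the formulas $Tr(x_0)=-2x_0\beta$, $Det(x_0)=-p_2'(x_0)$ from \eqref{4} to get exactly one zero eigenvalue, and the splitting/disappearance of the root pair as $t$ varies to conclude the saddle-node. Where the paper simply states ``computations show'' that the roots split for $t<0$ and become complex for $t>0$, you make this explicit via $\partial_\alpha p_2(x_0)=2x_0p_1(x_0)>0$ and the criterion $p_2''(x_0)>0$ (verified by a no-cusp continuity argument plus a one-point check), which is a sharper bookkeeping of the same computation rather than a different method.
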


\begin{proof} We know that at $t=0$ we have a double real root $x_0$ of $p_2$ which satisfy $x_0 p_1(x_0)>0$. Therefore, it follows from statement $(c)$ of Proposition~\ref{prop2X23} that $q_0=(x_0,f(x_0))$ is a non-hyperbolic finite singularity of $X_{23}$. Since $\beta_0<0$ we know that the trace of $DX_{23}(q_0)$ is not zero and thus $DX_{23}(q_0)$ has only one eigenvalue equal zero. Hence, $q_0$ is a non-hyperbolic saddle, a non-hyperbolic node or a saddle-node.

Computations show that if $t<0$, then the double root $x_0$ splits in two simple real roots $x_-$, $x_+$ satisfying $x_\pm p_1(x_\pm)>0$. Hence, it follows from Proposition~\ref{prop2X23} that $q_\pm=(x_\pm,f(x_\pm))$ are both hyperbolic finite singularities of $X_{23}$. The proof now follows from the fact that if $t>0$, then the double real root $x_0$ goes to the complex realm and thus the finite singularity $q_0$ vanishes. \end{proof}

Calculating the infinity of $X_{23}$, with $a=1$, one can see that the only singularities are the origins of the charts $U_1$ and $U_2$. While the origin of $U_2$ is an unstable node the local phase portrait at the origin of $U_1$ is given by Figure~\ref{Field23aEastPole}.
\begin{figure}[ht]	
	\begin{center}
		\begin{overpic}[width=3cm]{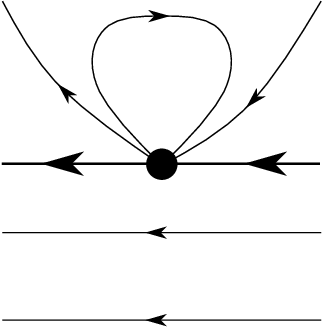} 
		\end{overpic}
	\end{center}	
	\caption{Local phase portrait of $p(X_{23})$, with $a=1$, at the origin of $U_1$.}\label{Field23aEastPole}
\end{figure}
To prove this last claim one must do a quasihomogeneous directional blow up at the direction $x^+$ with weight $(3,2)$ and then a quasihomogenous blow up with $(\alpha,\beta)=(1,4)$ at the unique singularity that appear. Hence it follows from Theorem~\ref{PH} that \emph{the sum of the Poincar\'e indices of all the finite singularities must always be equal to $-1$}.

It follows from the characterization given at Definition~\ref{Def4} that $X_{23}$ with $a=1$ is topologically equivalent at $p=(-\lambda,0)$ to the origin of:
\begin{enumerate}[label=(\alph*)]
	\item $X_{12}$ when $\alpha_0+\beta_0^2=0$, $|\beta_0|\geqslant1$ and $|\beta-\beta_0|<\varepsilon$.
	\item $X_{13}$ when $\beta_0=0$, $|\alpha_0|>1$ and $|\beta|<\varepsilon$.
	\item $X_{14}$ when $\alpha_0+\beta_0^2=0$, $0<|\beta_0|<1$ and $|\beta-\beta_0|<\varepsilon$.
\end{enumerate}
Although we have the local equivalence between $p$ at $X_{23}$ and the origin of $X_{12}$, $X_{13}$ or $X_{14}$ we do not have the direction of the bifurcation, i.e. as $\beta$ increases do $\lambda$ increase or decrease? The following proposition solves this question.

\begin{proposition}
 	Let $\lambda$ be the parameter of $X_{12}$, $X_{13}$ and $X_{14}$ and $(\alpha_0,\beta_0)\in\mathbb{R}^2$ fixed. The following statement holds.
\begin{enumerate}[label=(\alph*)]
	\item If $\alpha_0+\beta_0^2=0$, $|\beta_0|\geqslant1$ and $|\beta-\beta_0|<\varepsilon$, then as $|\beta|$ increases $\lambda$ also increases.
	\item If $\beta_0=0$, $|\alpha_0|>1$ and $|\beta|<\varepsilon$, then as $\beta$ increases $\lambda$ increases if $\alpha_0>1$ and decreases if $\alpha_0<-1$.
	\item If $\alpha_0+\beta_0^2=0$, $0<|\beta_0|<1$ and $|\beta-\beta_0|<\varepsilon$, then as $\beta$ increases $\lambda$ also increases.
\end{enumerate}
\end{proposition}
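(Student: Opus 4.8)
The plan is to pin down the sign of the normal‑form parameter $\lambda$ by using the local type of the symmetric singularity $p$ as a topological invariant: a topological equivalence must carry a saddle to a saddle and a center to a center, so the side of the bifurcation locus on which $p$ is a saddle must match the sign of $\lambda$ for which $p$ is a saddle in the normal form. First I would record, by a one‑line Jacobian computation at the symmetric singularity on $y=0$ of $X_{12}$, $X_{13}$ and $X_{14}$, that in all three normal forms this singularity has purely imaginary eigenvalues for $\lambda>0$ and real eigenvalues of opposite sign for $\lambda<0$; hence it is a center for $\lambda>0$ and a hyperbolic saddle for $\lambda<0$, exactly as in Proposition~\ref{X12finite}(a). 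On the other hand, Proposition~\ref{prop1X23}(a) says that $p$ is a hyperbolic saddle of $X_{23}$ when $\beta(\alpha+\beta^2)<0$ and a center when $\beta(\alpha+\beta^2)>0$. Comparing the two dichotomies forces $\operatorname{sign}\lambda=\operatorname{sign}\bigl(\beta(\alpha+\beta^2)\bigr)$, at least in the saddle case~(b) and the focal case~(c), where the normal form carries no extra sign.

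With this correspondence in hand the direction of the bifurcation is read off from the monotonicity of $g(\beta)=\beta(\alpha+\beta^2)$ along the transversal $\alpha=\alpha_0$, since $g'(\beta)=\alpha+3\beta^2$. For item~(b) the locus is the line $\beta=0$ and $g'(0)=\alpha_0$, which is positive when $\alpha_0>1$ and negative when $\alpha_0<-1$; thus $\lambda$ increases with $\beta$ in the former case and decreases in the latter. For items~(a) and~(c) the locus is the parabola $\alpha+\beta^2=0$, so $\alpha_0=-\beta_0^2$ and $g'(\beta_0)=2\beta_0^2>0$; in the focal case~(c), where the sign correspondence above applies verbatim, this immediately gives that $\lambda$ increases with $\beta$ regardless of the sign of $\beta_0$, which is the stated claim.

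The delicate point, and the step I expect to be the real obstacle, is item~(a), the nodal case governed by $X_{12}$. Here the normal form carries the extra sign $\delta\in\{-1,1\}$, and the two choices are interchanged by $(x,y;\lambda)\mapsto(-x,-y;-\lambda)$, which reverses the role of $\lambda$: the saddle occurs at $\lambda<0$ when $\delta=1$ but at $\lambda>0$ when $\delta=-1$. Consequently the saddle/center dichotomy alone cannot fix the sign of $\lambda$, and I must also decide which $\delta$ realizes the equivalence at $p$. I would do this through the symmetric pair of nodes that is born with the saddle when the nodal singularity unfolds, since their stability selects $\delta$ and, after the reparametrizing division that produces the reduced field $Y$, is governed by the trace $\operatorname{Tr}(x_0)=-2x_0\beta$ recorded in~\eqref{4}. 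I expect this to give $\delta=1$ for $\beta_0>0$ and $\delta=-1$ for $\beta_0<0$, so that the sign correspondence is $\operatorname{sign}\lambda=\operatorname{sign}\bigl(\beta(\alpha+\beta^2)\bigr)$ when $\beta_0>0$ and its negative when $\beta_0<0$; combined with $g'(\beta_0)=2\beta_0^2>0$ this yields that $\lambda$ increases with $\beta$ for $\beta_0>0$ and with $-\beta$ for $\beta_0<0$, the two cases merging into the single assertion that $\lambda$ increases as $|\beta|$ increases. The care needed to compute this node stability correctly, and to confirm that the eigenspaces stay transverse to $y=0$ so that Definition~\ref{Def4}(b) keeps applying across the range, is where the bulk of the work lies.
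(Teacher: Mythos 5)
Your first two paragraphs are exactly the paper's proof: the paper deduces all three items from Proposition~\ref{prop1X23}(a) together with the fact that the symmetric singularity of $X_{12}$, $X_{13}$, $X_{14}$ is a hyperbolic saddle for $\lambda<0$ and a center for $\lambda>0$, i.e.\ from the sign correspondence $\operatorname{sign}\lambda=\operatorname{sign}\bigl(\beta(\alpha+\beta^2)\bigr)$, with the directions then read off from $g'(\beta)=\alpha+3\beta^2$ just as you do. For items (b) and (c) your argument is complete and coincides with the paper's.

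The gap is in your treatment of item (a). The computation you propose in order to select $\delta$ gives the opposite of what you expect. The pair of nodes that coexists with the saddle collides with $p$ at the bifurcation, so on the saddle side their abscissa satisfies $x_0\to-\beta_0$; hence by \eqref{4} the trace at the node lying in $y>0$ is $\operatorname{Tr}(x_0)=-2x_0\beta\to 2\beta_0^2>0$, independently of the sign of $\beta_0$. Equivalently, the singularity of the reduced field $Y$ at $(-\beta_0,0)$ on the parabola has Jacobian $\left(\begin{array}{cc}2\beta_0^2 & -1\\ 1 & 0\end{array}\right)$, an unstable node whenever $|\beta_0|>1$. Since for $X_{12}$ with $\delta=1$, $\lambda<0$, the node in $y>0$ is unstable (Proposition~\ref{X12finite}(b)), whereas for $\delta=-1$ it is stable, the stability invariant selects $\delta=+1$ on \emph{both} branches of the parabola, not $\delta=-1$ on the lower one. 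Carried out correctly, your argument therefore yields that $\lambda$ increases with $\beta$ on both branches --- which is also all that the bare saddle/center matching gives, since $g'(\beta_0)=2\beta_0^2>0$ means each crossing is saddle-to-center as $\beta$ increases --- and it does not produce the asymmetry between $\beta_0\geqslant1$ and $\beta_0\leqslant-1$ that the formulation of item (a) in terms of $|\beta|$ requires. So item (a) as stated, for $\beta_0\leqslant-1$, is not established by your proposal. Your instinct that the nodal case is the delicate one is sound (the $\delta$-ambiguity is real, and the paper's own two-line proof also only yields monotonicity in $\beta$, not in $|\beta|$), but the resolution you sketch does not close it.
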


\begin{proof} All statements follow from statement $(a)$ of Proposition~\ref{prop1X23} and the fact that if $\lambda>0$, then $X_{12}$, $X_{13}$ and $X_{14}$ has a lonely center while if $\lambda<0$ it have a hyperbolic saddle. \end{proof} 

Finally we point out that at $\beta=0$ and $\alpha<-1$ we have a center-focus problem at the singularity $p_0=\frac{1}{\sqrt{2}}(-\sqrt{-1-\alpha},\sqrt{-1-\alpha})$ of $Y$. Translating this singularity to the origin one obtain the vector field $Y_1$ whose differential system is
\[\begin{array}{rl}
	\dot x =& \displaystyle -(2+\alpha)x+\alpha y-\frac{3}{\sqrt{2}}\sqrt{-1-\alpha}x^2+\sqrt{2}\sqrt{-1-\alpha}xy-\frac{\sqrt{-1-\alpha}}{\sqrt{2}}y^2 \vspace{0.2cm} \\
	&+x^3+xy^2 \vspace{0.2cm} \\
	\dot y =& \displaystyle -\alpha x+(2+\alpha)y-\frac{\sqrt{-1-\alpha}}{\sqrt{2}}x^2+\sqrt{2}\sqrt{-1-\alpha}xy-\frac{3}{\sqrt{2}}\sqrt{-1-\alpha}y^2 \vspace{0.2cm} \\
	&-x^2y-y^3.
\end{array}\]
Knowing that the vector field $Y_1$ is $\varphi$-reversible with $\varphi(x,y)=(-y,-x)$ we conclude that $p_0$ is a center.

\begin{proposition}
	The phase portraits of $X_{23}$ with $a=1$ are the one given in Figures~\ref{23a1}, \ref{23a2}, \ref{23a3} and \ref{23a4}.
\end{proposition}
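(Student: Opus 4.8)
The plan is to follow the approaching method of Section~\ref{sec2}, assembling the local information already obtained into the global portraits region by region in the parameter plane. The curves $\beta=0$, $\alpha=\pm1$, $\alpha+\beta^2=0$ and the branches $\beta_\pm$ of $R(\alpha,\beta)=0$ partition the $(\alpha,\beta)$-plane into finitely many open regions. On each such region Propositions~\ref{prop1X23} and \ref{prop2X23}, together with the description of the roots of $p_2$ and the sign of $x_0p_1(x_0)$, fix the number and the hyperbolic type (saddle, node, or centre/focus) of every finite singularity; the infinity is fixed throughout, with unstable nodes at the north and south poles and the local portrait of Figure~\ref{Field23aEastPole} at the east and west poles. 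First I would record this local skeleton in every region, using the saddle--node proposition to account for the appearance and disappearance of the pair $q_\pm$ across $\beta_\pm$, and the equivalences at $p$ with the origins of $X_{12}$, $X_{13}$ and $X_{14}$ (with the bifurcation directions already computed) to describe the local unfoldings across $\alpha+\beta^2=0$ and $\beta=0$.

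Next I would exclude limit cycles. By reversibility the phase portrait is symmetric about $y=0$, so any periodic orbit meeting this axis belongs to a one-parameter symmetric family and cannot be isolated; hence a limit cycle must lie entirely in $y>0$ or $y<0$ and the two half-planes carry symmetric pairs. It therefore suffices to rule them out for the cubic field $Y$ of \eqref{3} on $y_2>0$, to which $X_{23}$ is topologically equivalent there. The only singularity that could be enclosed by such a cycle is the focus $p_0$ present for $\alpha<-1$ (and its continuation for $\beta\neq0$); but $p_0$ has already been shown to be a genuine centre on $\beta=0$ via the reversibility of $Y_1$, and off $\beta=0$ its trace $-2x_0\beta$ (see~\eqref{4}) has a fixed sign, so it is a hyperbolic focus of definite stability. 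Combining this with a Dulac/transversal-curve analysis on the sets $S_x$ and $S_y$, in the spirit of the treatment of $X_{22}$ in Section~\ref{sec4}, I would conclude that no limit cycle is created.

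With the finite singularities, the infinity and the absence of limit cycles settled, I would then determine the $\alpha$- and $\omega$-limits of every separatrix by choosing convenient straight lines and curves (again the level sets $S_x$, $S_y$) and checking the direction in which the flow crosses them, exactly as in Section~\ref{sec2}; the Poincar\'e index identity (the sum of the finite indices equals $-1$, forced by Figure~\ref{Field23aEastPole} and Theorem~\ref{PH}) serves as a global consistency check for each candidate configuration, and the symmetry about $y=0$ halves the work since each connection in $y>0$ is mirrored in $y<0$. Sorting the resulting configurations by the parameter ranges $\alpha>1$, $-1<\alpha\leqslant1$ and $\alpha\leqslant-1$, where the extra branches $\beta_\pm$ and the points $\mu_1,\mu_2$ enter, yields the four Figures~\ref{23a1}--\ref{23a4}.

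The main obstacle will be the heteroclinic and homoclinic connection curves, the dashed lines in those figures, which separate topologically distinct global portraits inside a single region of the algebraic bifurcation set and which cannot be written explicitly. As for $X_{21}$ and $X_{22}$, I would prove the existence and local uniqueness of such a connection by introducing a displacement map $d(\alpha,\beta)$ transverse to the connecting orbit and showing, following~\cite{PerkoPaper}, that a directional derivative such as $\partial d/\partial\alpha$ keeps a fixed sign along it; the precise global shape and relative position of these curves would then be complemented by numerical computation, as was done for $X_{22}$. Keeping every connection paired under the reflection $y\mapsto-y$, so that all intermediate portraits remain symmetric, is the book-keeping difficulty that accompanies this step.
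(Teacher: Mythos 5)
Your overall frame (the local skeleton from Propositions~\ref{prop1X23} and~\ref{prop2X23}, separatrix analysis along $S_x$, $S_y$, and the index identity as a consistency check) agrees with the paper's method, but two of your key steps contain genuine gaps. The first is the limit-cycle exclusion. Your reduction to the cubic field $Y$ of \eqref{3} on $y_2>0$ is fine, but the argument you give there does not close: the dangerous regime is exactly $\alpha<-1$, $\beta\approx 0$, where $p_0$ is a center of $Y$ (proved via the reversibility of $Y_1$) and where perturbing to $\beta\neq 0$ destroys that reversibility --- precisely the situation in which limit cycles bifurcate from a period annulus. Observing from \eqref{4} that the trace $-2x_0\beta$ is nonzero, so that $p_0$ becomes a hyperbolic focus of definite stability, excludes nothing; hyperbolic foci are exactly what limit cycles surround. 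Moreover the Dulac/Bendixson route you gesture at is not available off the shelf: the divergence of $Y$ is $2(x_2^2-y_2^2)$, which changes sign in $y_2>0$, so the Bendixson argument that worked for $X_{22}$ does not apply, and no Dulac function is produced. This step needs an actual proof (a Melnikov-type analysis of the annulus, or an explicit Dulac function), not a plan.

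The second gap is the displacement-map treatment of the dashed connection curves. For $X_{21}$ the monotonicity of $d$ came from the pointwise identity $X_{21}\wedge\partial_\alpha X_{21}=\frac{1}{2}y$, which is sign-definite off the symmetry axis. For $X_{23}$ with $a=1$ the analogous quantities are
\begin{equation*}
X_{23}\wedge\partial_\alpha X_{23}=\tfrac{1}{2}\,y\left(y^4-x^2-\beta x\right), \qquad X_{23}\wedge\partial_\beta X_{23}=\tfrac{1}{2}\,P,
\end{equation*}
neither of which has a fixed sign along a connection, so Perko's integral has no a priori sign and the fixed-sign derivative $\partial d/\partial\alpha$ you invoke is unavailable. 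The paper never uses this tool here: it fixes a few anchor portraits ($1$--$5$, $13$, $23$, $31$, $32$) directly, obtains all the others by continuity along the bifurcation set (collapsing and splitting saddle-nodes across $\beta_\pm$, interpolating between adjacent regions), and it \emph{explicitly leaves open} whether the dashed curves through portraits $28$ and $30$ in Figure~\ref{23a1} intersect; Figure~\ref{23a4} is only the conditional portrait realized if they do. Your plan to pin these curves down by monotonicity plus numerics would, if it worked, settle an open problem the paper states it cannot; as written it is not a proof, and it misses the conditional status of Figure~\ref{23a4}. (A minor error in the same spirit of care: by the reversibility $y\mapsto-y$ the south pole is a \emph{stable} node, the time-reversed mirror of the unstable node at the north pole, not a second unstable node.)
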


\begin{proof} With an analysis of $f(x_2)$ and $x_2p_1(x_2)$ (see the functions defined at \eqref{2}) one can see that in region $1$ of Figure~\ref{23a1} there is only one pair of singularities aside $p=(-\lambda,0)$ and in region $23$ there is no other singularity other than $p$.  
	
Using the same techniques as in the previous systems, and the information that we already have, one can obtain the phase portraits $1$, $2$, $3$, $4$, $5$, $13$, $23$, $31$ and $32$. Looking at $13$ and $23$ and knowing that at $22$ we have the collapse of a node and a saddle in a saddle-node one can obtain $22$. Again if we collapse the saddle and the node in $5$ in a saddle-node we obtain the phase portrait $14$. As we get down at $\beta_-$ this saddle-node get away from $p$ and phase portraits $15$ and $16$ rise and with them $6$ and $7$, respectively.

At the intersection of $\beta_+$ and the $\beta=0$ if get down at $\beta_+$, then a saddle-node born at $p$. Calculations shows that near $\beta=0$ the direction of the unstable manifold of this saddle-node is very near the direction of the straight line $y=x$ and thus we obtain $24$. As we follow $\beta_+$ the direction of the unstable manifolds approach the direction of $y=0$ and then we can get $25$ and $26$. Looking at $26$ and $22$ we can have $21$. If we split the right hand saddle-node of $21$ in a saddle and a node we obtain $20$. Again if we split the left hand saddle-node of $20$ we obtain $11$. From $20$ and $32$ and knowing the bifurcation that happens at $p$ at $\alpha+\beta^2=0$ we can obtain $19$. Now we can get $17$ and $18$ if we look at $16$ and $19$. From them one can obtain $8$, $9$ and $10$. From $11$ and $13$ we have $12$. From $24$ we obtain $27$. From $26$ we obtain $29$. From $27$ and $29$ we obtain $28$. From $29$ and $31$ we obtain $30$.
	
We observe that it follows from the continuity that it is impossible that the curves defined by $8$ and $6$ intercept each other. On the other hand we do not know if the curves defined by $28$ and $30$ intercept each other. But combining the bifurcation of $28$ and $30$ we know that if the curves intercept each other, then we have the phase portrait of Figure~\ref{23a4}, and thus region defined by $29$ is disconnected. \end{proof}

Let $X_{23a}=(P,Q)$ be $X_{23}$ with $a=1$ and $X_{23b}$ be $X_{23}$ with $a=-1$. Observe that if we do the change of coordinates $(x,y)\mapsto(-x,y)$ at $X_{23a}$, then we obtain $X_{23b}=(-P,Q)$. Hence a huge amount of information of $X_{23a}$ can be carry on to $X_{23b}$. The most important ones are the following.
\begin{enumerate}[label=\arabic*.]
	\item The relative position of the finite singularities are the same. Hence, the solid lines of Figure~\ref{23a1} are carried onto the bifurcation diagram of $X_{23b}$.
	\item The determinant of the Jacobian matrices at the finite singularities are the same except by a factor of $-1$. Therefore, if $q$ is a saddle (focus/node) of $X_{23a}$, then it is a focus/node (saddle) of $X_{23b}$.
	\item If $\beta(\alpha+\beta^2)\neq0$, then $p=(-\beta,0)$ is center (saddle) at $X_{23a}$ if, and only if, it is a saddle (center) at $X_{23b}$.
\end{enumerate}
With the same approach as the previous cases, mainly $X_{23a}$, one can conclude the following proposition.

\begin{proposition}
	The phase portraits of $X_{23}$ with $a=-1$ are the ones given in Figures~\ref{23b1} and \ref{23b2}.
\end{proposition}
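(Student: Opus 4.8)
The plan is to transfer the entire analysis of $X_{23a}$ to $X_{23b}$ through the correspondence $(x,y)\mapsto(-x,y)$, which sends $X_{23a}=(P,Q)$ to $X_{23b}=(-P,Q)$, and then to repair only those features that this correspondence does not preserve. By observation $1$ above the relative positions of the finite singularities coincide, so the solid curves of Figure~\ref{23a1} (the branches $\beta_\pm$, the parabola $\alpha+\beta^2=0$ and the lines $\beta=0$, $\alpha=\pm1$) are exactly the partition of the $(\alpha,\beta)$-plane for $X_{23b}$; this supplies the skeleton of the bifurcation diagram underlying Figures~\ref{23b1} and \ref{23b2}.

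First I would settle the local phase portrait at every finite singularity. The non-symmetric singularities of $X_{23b}$ come, as before, in pairs symmetric about the $x$-axis, and by observation $2$ the sign of the determinant of the Jacobian is reversed: each hyperbolic saddle of $X_{23a}$ becomes a singularity with positive determinant (hence a node or a focus, to be distinguished by the sign of $\mathrm{Tr}^2-4\,\mathrm{Det}$), while each node or focus of $X_{23a}$ becomes a hyperbolic saddle. For the symmetric singularity $p$ on the $x$-axis I would invoke Proposition~\ref{prop1X23} together with observation $3$: since $p$ is a symmetric singularity it cannot be a focus (see~\cite[Section~$2$]{Teixeira2}), so it is a center exactly when it was a saddle for $X_{23a}$ and a saddle exactly when it was a center, while its degenerate portraits (the analogues of Figure~\ref{23aOrigin}) follow from the same quasihomogeneous blow up. The saddle-node, cusp and remaining non-hyperbolic transitions along $\beta_\pm$, along $\alpha+\beta^2=0$ and along $\beta=0$ are then inherited up to the type-exchange just described.

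Next I would recompute the behaviour at infinity directly for $X_{23b}$, because the reflection $x\mapsto -x$ moves the directions at infinity and because $\mathrm{div}\,X_{23b}=-\partial_xP+\partial_yQ$ differs from $\mathrm{div}\,X_{23a}$; following the same procedure as for $X_{23a}$ (the Poincar\'e compactification of Appendix~\ref{PC} together with quasihomogeneous directional blow ups) one locates the singularities at infinity and their local portraits. With the finite and infinite local data in hand, and using the reversibility of $X_{23b}$ with $\varphi(x,y)=(x,-y)$, I would then glue the separatrices exactly as in Section~\ref{sec2}, tracking the flow across the auxiliary curves $S_x$ and $S_y$ to obtain each region's global portrait, and hence Figures~\ref{23b1} and \ref{23b2}.

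The hard part will be the limit cycles, the one piece that does \emph{not} transfer. Where $X_{23a}$ carried only saddles and reversibility forced centers, $X_{23b}$ may now carry genuine foci (the former saddles with $\mathrm{Tr}^2<4\,\mathrm{Det}$), and such foci can in principle be encircled by small limit cycles arising in symmetric pairs under $\varphi$. I would therefore redo the Bendixson-type estimate with the new divergence $-\partial_xP+\partial_yQ$, combine it with the index obstruction of Appendix~\ref{index} (a limit cycle must enclose finite singularities of total index $+1$), and, for any symmetric singularity that becomes a candidate center-focus, resolve it by exhibiting an extra reversibility, as was done for $X_{23a}$ at $\beta=0$, $\alpha<-1$. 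Ruling out (or precisely locating) these cycles, and distinguishing node from focus among the former saddles by a trace computation, are the only genuinely new steps; everything else descends from the analysis of $X_{23a}$.
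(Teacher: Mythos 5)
Your proposal follows essentially the same route as the paper: the paper likewise passes from $X_{23a}$ to $X_{23b}$ via the reflection $(x,y)\mapsto(-x,y)$ giving $X_{23b}=(-P,Q)$, invokes exactly the three observations you cite (same singularity positions, hence the same solid bifurcation curves; determinant sign flip exchanging saddles with nodes/foci; center/saddle exchange at the symmetric singularity $p$), and then concludes by ``the same approach as the previous cases, mainly $X_{23a}$.'' Your more explicit accounting of what does not transfer --- the behaviour at infinity, the separatrix gluing, the trace computation distinguishing nodes from foci, and the limit-cycle/center-focus analysis handled by Bendixson-type, index and reversibility arguments --- is a faithful expansion of what the paper leaves implicit, not a different argument.
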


\section{Systems $X_{24}$ and $X_{25}$}\label{sec6}

Let us remember that $X_{24}$ is given by 
	\[\dot x = axy+\alpha y^3, \quad \dot y = \frac{\beta}{2}+\frac{1}{2}x+\frac{a}{2}y^2,\]
where $a\in\{-1,1\}$. First we observe that if we replace $a=-1$ and then do the change of variables
	\[(x,y,\alpha,\beta)=(-x_1,-y_1,\alpha_1,-\beta_1),\]
then we get the same system with $a=1$. Therefore from now on we assume $a=1$. In this system we point out that it is the unique system which the maximum degree depends on the parameters. Observe that the maximum degree is three if $\alpha\neq0$ and two if $\alpha=0$. Therefore, one must do an analysis at each case. Here we only point out some information about the local phase portrait at the origin of chart $U_1$ for $\alpha=1$ and $\beta\neq0$. In this case the local phase portrait of the blow up is incomplete due to the existence of two singularities with a unique eigenvalue (the radial one) equal zero. See Figure~\ref{Field24EastPole3}.
\begin{figure}[ht]	
	\begin{center}
		\begin{overpic}[height=3cm]{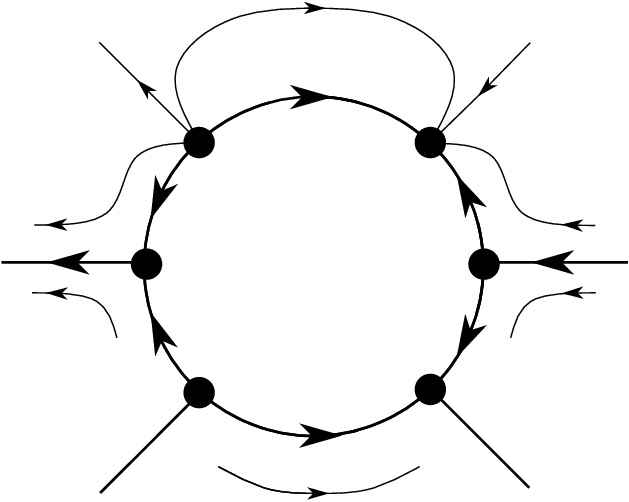} 
		\end{overpic}
	\end{center}	
	\caption{Unfinished blow up of the origin of chart $U_1$ of $X_{24}$ with $\alpha=1$.}\label{Field24EastPole3}
\end{figure}
Calculations show that in this case a center manifold of the origin of the chart $U_1$ such that $v<0$, $|v|<\varepsilon$, is given by the graph of
	\[v(u)=-u^2-\frac{\beta}{2}u^4+O(u^6).\]
This in addition with the fact that
	\[\left.\dot v\right|_{\alpha=1} = -uv(u^2+v),\]
it is enough to calculate the direction of the flow on the center manifold and thus to complete the blow up when $\beta\neq0$. See Figure~\ref{Field24EastPole4}.
\begin{figure}[ht]
	\begin{multicols}{2}				
		\begin{center}
			\includegraphics[height=3cm]{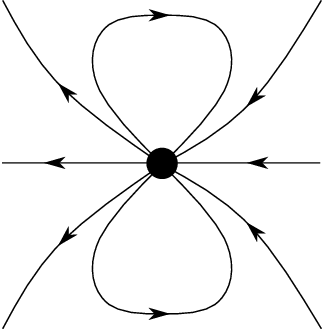}
			
			$\beta<0$
		\end{center}
		\columnbreak		
		\begin{center}
			\includegraphics[height=3cm]{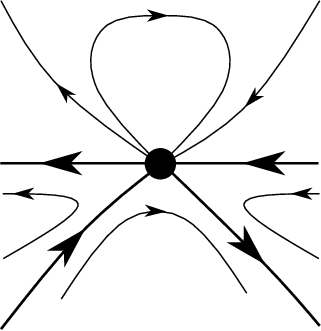}
			
			$\beta>0$
		\end{center}				
	\end{multicols}
	\caption{Local phase portrait of $X_{24}$ at the origin of the chart $U_1$ with $a=1$, $\alpha=1$ and $\beta\neq0$.}\label{Field24EastPole4}
\end{figure}

Let us remember that system $X_{25b}$ is given by
	\[\dot x = axy, \quad \dot y = \frac{\beta}{2}+\frac{\alpha}{2}x+\frac{\delta}{2}x^2+\frac{b}{2}y^2,\]
where $ab>0$, $\delta\in\{-3,3\}$ and 
	\[\bigl\{\left(a\in\{-1,1\},\, b\in\{-3,3\}\right) \text{ or } \left(a\in\{-3,3\},\, b\in\{-1,1\}\right)\bigr\}.\]
Here we observe that with the change of variables
	\[(x,y,\alpha,\beta)\rightarrow(x_1,-y_1,-\alpha_1,-\beta_1),\]
it is enough to study only the four cases given by
\begin{enumerate}[label=(\alph*)]
	\item $a=1$, $b=3$ and $\delta=3$.
	\item $a=1$, $b=3$ and $\delta=-3$.
	\item $a=3$, $b=1$ and $\delta=3$.
	\item $a=3$, $b=1$ and $\delta=-3$.
\end{enumerate}

\section*{Acknowledgments}

This work is supported by Agencia Estatal de Investigaci\'on grant PID2022-136613NB-100, AGAUR (generalitat de Catalunha) grant 2021SGR00113 and by the Reial Acadèmia de Cièncias i Arts de Barcelona, by CNPq grant 304798/2019-3, by Agence Nationale de la Recherche (ANR), project ANR-23-CE40-0028, and S\~ao Paulo Research Foundation (FAPESP), grants 2021/01799-9 and 2023/02959-5.

\appendix

\section{Local linearization of the involution}\label{Linearization}

Let $Q\colon\mathbb{R}^m\to\mathbb{R}^m$ be a linear involution, i.e. a linear map such that $Q=Q^{-1}$ (also known as \emph{involuntory matrix}). Then the following statements hold.
\begin{enumerate}[label=(\roman*)]
	\item If $\lambda\in\mathbb{C}$ is a eigenvalue of $Q$, then $\lambda\in\{-1,1\}$.
	\item $Q$ is diagonalizable.
\end{enumerate}

Indeed, let $\lambda\in\mathbb{C}$ be a eigenvalue of $Q$ and $x\in\mathbb{R}^m$, $x\neq0$, one of its eigenvectors. Since $Q^2$ is the identity map it follows that,
	\[x=Q^2x=Q(Qx)=\lambda Qx=\lambda^2x.\]
In particular we have that $\lambda^2=1$ and thus $\lambda\in\{-1,1\}$, proving statement $(i)$. 

We now prove statement $(ii)$. Suppose by contradiction that $Q$ is not diagonalizable. Then its Jordan canonical form (see~\cite[Chapter $4.5$]{Jordan}) has at least one Jordan block of size $k\geqslant2$. Hence there is a eigenvalue $\lambda\in\mathbb{C}$ of $Q$ and vectors $u$, $v\in\mathbb{R}^m\setminus\{0\}$ such that 
	\[Qu=\lambda u, \quad Qv=u+\lambda v.\]
Now observe that,
	\[v=Q^2v=Q(u+\lambda v)=\lambda u+\lambda(u+\lambda v)=2\lambda u+\lambda^2v.\]
From statement $(i)$ we know that $\lambda^2=1$ and thus we obtain $0=2\lambda u$, which in turn implies $\lambda=0$ contradicting $(i)$.

\begin{proposition}
	Let $\varphi\colon\mathbb{R}^m\to\mathbb{R}^m$ be an involution such that $\operatorname{Fix}(\varphi)$ is an embedded sub-manifold of dimension $r\in\{0,\dots,m\}$. If $p\in\operatorname{Fix}(\varphi)$, then there is a neighborhood of $p$ such that $\varphi$ is $C^k$-conjugated to
		\[\Phi(x_1,\dots,x_m)=(x_1,\dots,x_r,-x_{r+1},\dots,-x_m).\]
\end{proposition}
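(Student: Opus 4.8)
The plan is to linearize $\varphi$ near the fixed point by Bochner's averaging trick and then to diagonalize the resulting linear involution. After translating $p$ to the origin we may assume $p=0$ and $\varphi(0)=0$. Differentiating the identity $\varphi^2=\operatorname{Id}$ at the origin yields $(D\varphi(0))^2=\operatorname{Id}$, so $Q:=D\varphi(0)$ is a linear involution. By the two statements established just before this proposition, $Q$ is diagonalizable and all its eigenvalues lie in $\{-1,1\}$; in particular $Q^{-1}=Q$.

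The central step is to produce a local change of coordinates conjugating $\varphi$ to its linear part $Q$. To this end I would define, on a neighborhood of the origin, the averaged map
\[h(x)=\tfrac12\bigl(x+Q\varphi(x)\bigr).\]
A direct computation, using $\varphi^2=\operatorname{Id}$ and $Q^2=\operatorname{Id}$, shows that $h\circ\varphi=Q\circ h$, so that $h$ conjugates $\varphi$ to $Q$. Moreover $Dh(0)=\tfrac12(I+Q^2)=I$, so the inverse function theorem guarantees that $h$ is a $C^k$-diffeomorphism on a (possibly smaller) neighborhood of the origin. Thus $\varphi$ is locally $C^k$-conjugate to the linear involution $Q$.

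It remains to put $Q$ into the normal form $\Phi$. Since $Q$ is diagonalizable with eigenvalues $\pm1$, the space splits as $\mathbb{R}^m=\ker(Q-I)\oplus\ker(Q+I)$, and one chooses a linear isomorphism $L$ sending $\ker(Q-I)$ onto $\mathbb{R}^r\times\{0\}$ and $\ker(Q+I)$ onto $\{0\}\times\mathbb{R}^{m-r}$, so that $LQL^{-1}=\Phi$. The composition $L\circ h$ is then the desired $C^k$-conjugacy between $\varphi$ and $\Phi$, once we know that $\dim\ker(Q-I)=r$. This last point is the step I would treat most carefully: because $h$ is a local diffeomorphism satisfying $h\circ\varphi=Q\circ h$, it carries $\operatorname{Fix}(\varphi)$ onto $\operatorname{Fix}(Q)=\ker(Q-I)$ near the origin; hence the linear subspace $\ker(Q-I)$ is locally diffeomorphic to the $r$-dimensional manifold $\operatorname{Fix}(\varphi)$ and therefore has dimension exactly $r$. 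Consequently $Q$ has precisely $r$ eigenvalues equal to $+1$ and $m-r$ equal to $-1$, which is exactly what the normal form $\Phi$ requires.

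The only genuinely delicate points are the construction of the averaging map $h$ and the verification of the conjugation identity $h\circ\varphi=Q\circ h$ together with $Dh(0)=I$; everything after that is linear algebra and a dimension count. I expect the dimension-matching argument to be the main conceptual obstacle, since it is precisely where the hypothesis that $\operatorname{Fix}(\varphi)$ is an embedded sub-manifold of dimension $r$ is actually used.
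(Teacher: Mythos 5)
Your proposal is correct and takes essentially the same route as the paper: your averaged map $h(x)=\tfrac12\bigl(x+Q\varphi(x)\bigr)$ is just one half of the paper's conjugating map $\sigma(x)=x+Q\varphi(x)$, and you use the same conjugation identity, the same appeal to the inverse function theorem (the paper gets $D\sigma(0)=2\operatorname{Id}$ instead of the identity, which changes nothing), and the same dimension count identifying $\dim\ker(Q-\operatorname{Id})=r$ by transporting $\operatorname{Fix}(\varphi)$ onto $\operatorname{Fix}(Q)$ through the local diffeomorphism. The final diagonalization of $Q$ via the eigenvalues $\pm1$ likewise matches the two preliminary statements the paper establishes before the proposition.
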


\begin{proof} 
	Let $T(x)=x+p$ and $\psi(x)=T^{-1}\circ \varphi \circ T$. Observe that $\psi(0)=0$ and $\psi=\psi^{-1}$. Therefore $\varphi$ is conjugate to another involution $\psi$ such that $\psi(0)=0$ and thus we can suppose $p=0$. Let $Q=D\varphi(0)$ and $\sigma(x)=x+Q\varphi(x)$. Since $\varphi=\varphi^{-1}$ and $\varphi(0)=0$ it follows that,
	\begin{equation}\label{5}
		Q^{-1}=\left[D\varphi(0)\right]^{-1}=D\varphi^{-1}(\varphi(0))=D\varphi(\varphi(0))=D\varphi(0)=Q.
	\end{equation}
	In particular we have that $D\sigma(0)=2\operatorname{Id}_{\mathbb{R}^m}$ and thus from the Inverse Function Theorem it follows that $\sigma$ is locally a $C^k$-difeomorphism at the origin. Observe that
		\[\sigma(\varphi(x))=\varphi(x)+Qx=Q(Q\varphi(x)+x)=Q\sigma(x).\]
	Therefore $\sigma$ conjugates $\varphi$ and $Q$. In particular it follows that $\operatorname{Fix}(Q)\subset\mathbb{R}^m$ is sub vector space of dimension $r$. From \eqref{5} we know that $Q$ is an involution. Thus from statements $(i)$ and $(ii)$ we have that there is a linear change of variables such that in these new variables $Q$ is given by,
		\[Q(x_1,\dots,x_m)=(x_1,\dots,x_r,-x_{r+1},\dots,-x_m).\]
	The result now follows from the fact that $\varphi$ is $C^k$-conjugated to $Q$ by $\sigma$. 
\end{proof}

\section{Poincar\'e Compactification}\label{PC}

Let $X$ be a planar \emph{polynomial} vector field of degree $n$ as our polynomial differential systems of Theorem~\ref{T1}. The \emph{Poincar\'e compactified vector field} $p(X)$ is an analytic vector field on $\mathbb{S}^2$ constructed as follow (for more details see~\cite[Chapter~$5$]{DumLliArt2006}).

First we identify $\mathbb{R}^2$ with the plane $(x_1,x_2,1)$ in $\mathbb{R}^3$ and define the \emph{Poincar\'e sphere} as $\mathbb{S}^2=\{y=(y_1,y_2,y_3)\in\mathbb{R}^3:y_1^2+y_2^2+y_3^2=1\}$. We define the \emph{northern hemisphere}, the \emph{southern hemisphere} and the \emph{equator} respectively by $H_+=\{y\in\mathbb{S}^2:y_3>0\}$, $H_-=\{y\in\mathbb{S}^2:y_3<0\}$ and $\mathbb{S}^1=\{y\in\mathbb{S}^2:y_3=0\}$.

Consider now the projections $f_\pm:\mathbb{R}^2\rightarrow H_\pm$ given by
	\[f_\pm(x_1,x_2)=\pm \Delta(x_1,x_2)(x_1,x_2,1),\]
where $\Delta(x_1,x_2)=(x_1^2+x_2^2+1)^{-\frac{1}{2}}$. These two maps define two copies of $X$, one copy $X^+$ in $H_+$ and one copy $X^-$ in $H_-$. Consider the vector field $X'=X^+\cup X^-$ defined in $\mathbb{S}^2\backslash\mathbb{S}^1$. Note that the \emph{infinity} of $\mathbb{R}^2$ is identified with the equator $\mathbb{S}^1$. The Poincar\'e compactified vector field $p(X)$ is the analytic extension of $X'$ from $\mathbb{S}^2\backslash\mathbb{S}^1$ to $\mathbb{S}^2$ given by $y_3^{n-1}X'$. The \emph{Poincar\'e disk} $\mathbb{D}$ is the projection of the closed northern hemisphere to $y_3=0$ under $(y_1,y_2,y_3)\mapsto(y_1,y_2)$ (the vector field given by this projection is also denoted by $p(X)$). Note that to know the behavior $p(X)$ near $\mathbb{S}^1$ is the same than to know the behavior of $X$ near the infinity. We define the local charts of $\mathbb{S}^2$ by $U_i=\{y\in\mathbb{S}^2:y_i>0\}$ and $V_i=\{y\in\mathbb{S}^2:y_i<0\}$ for $i\in\{1,2,3\}$. In these charts we define $\phi_i:U_i\rightarrow\mathbb{R}^2$ and $\psi_i:V_i\rightarrow\mathbb{R}^2$ by 
	\[\phi_i(y_1,y_2,y_3)=-\psi_i(y_1,y_2,y_3) = \left(\frac{y_m}{y_i},\frac{y_n}{y_i}\right),\]
where $m\neq i$, $n\neq i$ and $m<n$. Denoting by $(u,v)$ the image of $\phi_i$ and $\psi_i$ in every chart (therefore $(u,v)$ play different roles in each chart) one can see the following expressions for $p(X)$:
\begin{equation*}
	\begin{aligned}
		&v^n\;m(u,v)\left(Q\left(\frac{1}{v},\frac{u}{v}\right)-uP\left(\frac{1}{v},\frac{u}{v}\right),-vP\left(\frac{1}{v},\frac{u}{v}\right)\right) \text{ in } U_1, \\
		&v^n\;m(u,v)\left(P\left(\frac{u}{v},\frac{1}{v}\right)-uQ\left(\frac{u}{v},\frac{1}{v}\right),-vQ\left(\frac{u}{v},\frac{1}{v}\right)\right) \text{ in } U_2, \\
		&m(u,v)(P(u,v),Q(u,v)) \text{ in } U_3,
	\end{aligned}
\end{equation*}
where $m(u,v)=(u^2+v^2+1)^{-\frac{1}{2}(n-1)}$. We can omit the term $m(u,v)$ by a time rescaling of $p(X)$. Therefore, we obtain a polynomial expression of $p(X)$ in each $U_i$. The expressions of $p(X)$ in each $V_i$ is the same as that for each $U_i$, except by a multiplicative factor of $(-1)^{n-1}$. In these coordinates for $i\in\{1,2\}$, $v=0$ always represents the points of $\mathbb{S}^1$ and thus the infinity of $\mathbb{R}^2$. Note that $\mathbb{S}^1$ is invariant under the flow of $p(X)$.

\section{Blow Up Technique}\label{BlowUp}

If the origin is an isolated singularity of a \emph{polynomial} vector field $X$, then we can apply the change of coordinates $\phi:\mathbb{R}_+\times \mathbb{S}^1\rightarrow \mathbb{R}^2$ given by $\phi(\theta,r)=(r\cos\theta,r\sin\theta)=(x,y)$, where $\mathbb{R}_+=\{r\in\mathbb{R}:r>0\}$. Therefore, we can induce a vector field $X_0$ in $\mathbb{R}_+\times\mathbb{S}^1$ by pullback, i.e. $X_0=D\phi^{-1}  X$. One can see that if the $k$-jet of $X$ (i.e. the Taylor expansion of order $k$ of $X$, denoted by $j_k$) is zero at the origin, then the $k$-jet of $X_0$ is also zero in every point of $\{0\}\times\mathbb{S}^1$. Thus, taking the first $k\in\mathbb{N}$ satisfying $j_k(0,0)=0$ and $j_{k+1}(0,0)\neq0$ we can define the vector field $\hat X=\frac{1}{r^k}X_0$. Therefore, to know the behavior of $\hat X$ near $\mathbb{S}^1$ is the same than to know the behavior of $X$ near the origin. One can also see that $\mathbb{S}^1$ is invariant under the flow of $\hat X$. Observe that $\hat X$ is given by 
	\[\dot r = \frac{x\dot x+y\dot y}{r^{k+1}}, \quad \dot \theta = \frac{x\dot y-y\dot x}{r^{k+2}}.\]
There is a generalization of the Blow Up Technique, known as \emph{Quasihomogeneous Blow Up}, where the change of variables is given by 
	\[\psi(\theta,r)=(r^\alpha \cos\theta,r^\beta \sin\theta)=(x,y),\]
for $(\alpha,\beta) \in \mathbb{N}^2$. The pair $(\alpha,\beta)$ is called the \emph{weight} of the quasihomogeneous blow up. Similarly to the previous technique, we can induce a vector field $X_0$  in $\mathbb{R}_+\times \mathbb{S}^1$. For some $k \in \mathbb{N}$ maximal one can define $X_{\alpha,\beta}=\frac{1}{r^k}X_0$ and see that this vector field is given by
	\[\dot r = \xi(\theta)\frac{\cos\theta\;r^\beta\dot x+\sin\theta\;r^\alpha\dot y}{r^{\alpha+\beta+k-1}}, \quad \dot \theta = \xi(\theta)\frac{\alpha \cos\theta\;r^\alpha\dot y-\beta \sin\theta\;r^\beta\dot x}{r^{\alpha+\beta+k}},\]
where $\xi(\theta)=(\beta \sin^2\theta+\alpha \cos^2\theta)^{-1}$. Observe that the factor $\xi(\theta)$ can be cancel out by time rescaling. Similarly to the previous technique, to know the behavior of $X_{\alpha,\beta}$ near $\mathbb{S}^1$ (which is invariant) is the same than to know the behavior of $X$ near the origin. 

There is another blow up technique, known as \emph{Quasihomogeneous directional Blow Up}. This time we consider the change of coordinates 
\[\begin{cases}
	(x,y) \mapsto (x_1^\alpha, x_1^\beta y_1) \quad \text{positive $x$-direction} \\
	(x,y) \mapsto (-x_1^\alpha, x_1^\beta y_1) \quad \text{negative $x$-direction} \\	
	(x,y) \mapsto (x_1 y_1^\alpha, y_1^\beta) \quad \text{positive $y$-direction} \\
	(x,y) \mapsto (x_1 y_1^\alpha, -y_1^\beta) \quad \text{negative $y$-direction}, \\
\end{cases}\]
where $(\alpha,\beta)\in\mathbb{N}^2$. Again the pair $(\alpha,\beta)$ is called the \emph{weight} of the quasihomogeneous directional blow up. For example, if we do a Quasihomogeneous blow up at the positive $x$-direction, leading to a vector field $X_+^x$, then to understand the positive $x$-direction, i.e. $x>0$, of $X$ is the same than to understand the positive $x_1$-direction of vector field $X_+^x$. Similarly to the previous techniques, for some $k\in\mathbb{N}$ maximal one can divide $X_+^x$ and $X_-^x$ by $x^k$ (and $X_+^y$, $X_-^y$ by $y_k$) and study a regularized version of these systems. Moreover, these vectors fields (not regularized) are given by
\[\begin{array}{lll}
	\dot x_1 = \dfrac{1}{\alpha} x_1^{1-\alpha} \dot x, & \dot y_1 = \dfrac{x_1^\alpha \dot y - \frac{\beta}{\alpha}\dot x y}{x_1^{\alpha+\beta}} & \text{ at the positive $x$-direction,} \\
	&& \\
	\dot x_1 = -\dfrac{1}{\alpha} x_1^{1-\alpha} \dot x, & \dot y_1 = \dfrac{x_1^\alpha \dot y + \frac{\beta}{\alpha}\dot x y}{x_1^{\alpha+\beta}} & \text{ at the negative $x$-direction,} \\
	&& \\
	\dot x_1 = \dfrac{y_1^\beta \dot x - \frac{\alpha}{\beta}\dot y x}{y_1^{\alpha+\beta}}, & \dot y_1 = \dfrac{1}{\beta} y_1^{1 - \beta} \dot y & \text{ at the positive $y$-direction,} \\
	&& \\
	\dot x_1 = \dfrac{y_1^\beta \dot x + \frac{\alpha}{\beta}\dot y x}{y_1^{\alpha+\beta}}, & \dot y_1 = -\dfrac{1}{\beta} y_1^{1 - \beta} \dot y & \text{ at the negative $y$-direction.} \\
\end{array}\]

For a more detailed study of the blow up technique we refer to~\cite[Chapter~$3$]{DumLliArt2006} and~\cite{Alvarez}. For a nice generalization of this technique in which the infinity is ``exploded'' instead of the origin, we refer to the recent work~\cite{Otavio}.

\section{Markus-Neumann Theorem}\label{MN}

Let $X$ be a \emph{polynomial} vector field and $p(X)$ be its compactification defined on $\mathbb{D}$ and $\phi$ the flow defined by $p(X)$. The separatrices of $p(X)$ are given by the following.

\begin{enumerate}[label=\arabic*.]
	\item All the orbits contained in $\mathbb{S}^1$, i.e. at infinity.
	\item All the singular points.
	\item All the separatrices of the hyperbolic sectors of the finite and infinite singular points.
	\item All the limit cycles of $X$.
\end{enumerate}

Denote by $\mathcal{S}$ the set of all separatrices. It is known that $\mathcal{S}$ is closed, see for instance \cite{DumLliArt2006}. Each connected component of $\mathbb{D}\backslash\mathcal{S}$ is called a \emph{canonical region} of the flow $(\mathbb{D},\phi)$. The \emph{separatrix configuration} $\mathcal{S}_c$ of a flow $(\mathbb{D},\phi)$ is the union of all the separatrices $\mathcal{S}$ of the flow together with one orbit belonging to each canonical region. The separatrix configuration $\mathcal{S}_c$ of the flow $(\mathbb{D},\phi)$ is topologically equivalent to the separatrix configuration $\mathcal{S}_c^*$ of the flow $(\mathbb{D},\phi^*)$ if there exists a homeomorphism from $\mathbb{D}$ to $\mathbb{D}$ which transforms orbits of $\mathcal{S}_c$ into orbits of $\mathcal{S}_c^*$ and preserves or reverses the orientation of all these orbits.

\begin{theorem}[Markus-Neumann]
	Let $p(X)$ and $p(Y)$ be two Poin\-car\'e compactifications in the Poincar\'e disk $\mathbb{D}$ of two polynomial vector fields $X$ and $Y$ with finitely many singularities, respectively. Then the phase portraits of $p(X)$ and $p(Y)$ are topologically equivalent if and only if their separatrix configurations are topologically equivalent.
\end{theorem}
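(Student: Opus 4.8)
The plan is to prove the two implications separately, with essentially all the work lying in the ``if'' direction.

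For the ``only if'' direction I would argue that a topological equivalence preserves the defining features of the separatrix set. Suppose $h\colon\mathbb{D}\to\mathbb{D}$ is a topological equivalence between $p(X)$ and $p(Y)$. Since $h$ carries orbits to orbits and respects the qualitative (topological) structure of the flow, it must send each of the four types of separatrices enumerated above to separatrices of the same type: the equator $\mathbb{S}^1$ is invariant and topologically distinguished, singular points are mapped to singular points, the separatrices bounding hyperbolic sectors are characterized by purely local topological behavior and are therefore preserved, and limit cycles are the isolated periodic orbits and hence preserved. Thus $h$ maps $\mathcal{S}$ of $p(X)$ onto $\mathcal{S}$ of $p(Y)$; because it also maps canonical regions onto canonical regions, the restriction of $h$ to a separatrix configuration $\mathcal{S}_c$ of $p(X)$ is a topological equivalence of separatrix configurations.

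For the ``if'' direction I would start from a homeomorphism $h_0\colon\mathbb{D}\to\mathbb{D}$ realizing the equivalence of the separatrix configurations, so that $h_0$ sends $\mathcal{S}$ of $p(X)$ onto $\mathcal{S}$ of $p(Y)$ orbit-by-orbit and preserving orientation. The goal is to extend $h_0$ across each canonical region to a global equivalence. The structural fact I would invoke is that, since a canonical region $R$ contains no separatrices and no singular points, the flow on $R$ is \emph{parallelizable}: it is topologically equivalent to one of a short list of standard parallel flows (a strip flow on $\mathbb{R}\times(0,1)$ with horizontal orbits, an annular flow, or a spiral flow). This is the core of Neumann's argument and reduces the extension to the model cases. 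Concretely, for each canonical region $R$ of $p(X)$ and the corresponding region $R'$ of $p(Y)$, I would choose a cross-section meeting every orbit of $R$ exactly once, transport it by $h_0$ together with the boundary correspondence already prescribed on the bounding separatrices, and then use the flow parametrization along orbits to extend $h_0$ to a homeomorphism $R\to R'$ that carries orbits to orbits and agrees with $h_0$ on $\partial R$. The parallelizable model guarantees that such an extension exists and matches the prescribed accumulation behavior on the boundary. Performing this on every canonical region and gluing produces the global topological equivalence $h$.

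The main obstacle is the gluing step: one must ensure that the extensions built over two canonical regions sharing a common separatrix agree on that separatrix, so that the assembled map is continuous across $\mathcal{S}$. This forces me to choose the cross-sections and the flow-box coordinates near each separatrix compatibly from both sides, so that the single boundary correspondence $h_0$ is matched exactly by each regional extension. The finiteness of the singular set, which makes $\mathcal{S}$ closed and the decomposition of $\mathbb{D}$ into canonical regions locally finite, is what keeps this gluing manageable and guarantees that the resulting $h$ is a well-defined global homeomorphism and hence a topological equivalence of $p(X)$ and $p(Y)$.
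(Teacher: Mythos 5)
The first thing to note is that the paper does not prove this statement at all: the theorem is quoted as a classical result, and for its proof the paper refers to \cite{Mar1954}, \cite{Neu1975}, the correction \cite{CorrectionMN}, and the recent complete account \cite{Braun}. So your attempt can only be measured against that literature, and your outline does follow its standard architecture (Neumann's, in particular): the easy direction by showing an equivalence preserves each of the four classes of separatrices, and the hard direction by parallelizing the canonical regions, extending the given homeomorphism along flow coordinates from a cross-section, and gluing region by region.

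The genuine gap is the gluing step, which you correctly identify as the main obstacle but do not resolve. Saying that one must ``choose the cross-sections and the flow-box coordinates near each separatrix compatibly from both sides'' restates the problem rather than solves it: the map built inside a canonical region $R$ from a cross-section and the flow parametrization is a priori defined only on the open region, and there is no automatic reason why it should extend continuously to the separatrices in the boundary of $R$, let alone agree there both with $h_0$ and with the extension built from the neighboring region. Making this work requires controlling how the orbits of $R$ accumulate on the boundary separatrices --- in the present polynomial setting this uses the finite sector decomposition at the finitely many singularities and the finiteness of the separatrix set --- and then choosing the regional homeomorphisms to respect those limits. This is precisely the delicate point on which the original arguments of Markus and Neumann were later found to be deficient, which is why the paper cites \cite{CorrectionMN} and \cite{Braun} rather than reproducing a proof. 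A careful write-up would also need to justify, rather than merely invoke, the parallelization lemma (every canonical region is equivalent to the strip, the annular, or the spiral/radial model), since it is the other load-bearing ingredient of your argument; it holds here exactly because a polynomial vector field on the Poincar\'e disk with finitely many singularities has only finitely many separatrices, and that hypothesis must enter explicitly.
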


For a proof of this theorem see \cite{Mar1954,Neu1975,CorrectionMN}. For a nice digestion of the complete proof we also refer to the recent work~\cite{Braun}.

\begin{remark}
	In Figures~\ref{cod0and1} to \ref{23b2} we have written the separatrix configurations of the corresponding Poincar\'e compactifications.
\end{remark}

\section{Index of Singularities of a Vector Field}\label{index}

Let $p$ be an isolated singularity of a \emph{polynomial} vector field $X$. Let $e$ and $h$ denote the number of elliptical and hyperbolic sectors of $p$, respectively. The \emph{Poincar\'e index} of $p$ is given by
	\[i_p=\frac{e-h}{2}+1.\]
It is known that $i_p\in\mathbb{Z}$. See for instance~\cite[Chapter~$6$]{DumLliArt2006}.

\begin{proposition}\label{LC}
	Let $\Gamma$ be a limit cycle of a planar polynomial vector field $X$. Then there is at least one singularity in the bounded region limited by it. Moreover if there is a finite number of singularities in the bounded region limited by $\Gamma$, then the sum of their Poincar\'e indices is $1$.
\end{proposition}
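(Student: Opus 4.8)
The plan is to work with the rotation (winding) number of $X$ along closed curves and its homotopy invariance, following~\cite[Chapter~6]{DumLliArt2006}. For a simple closed curve $C$ carrying no singularity of $X$, write $I(X,C)$ for the degree of the Gauss map $C\to\mathbb{S}^1$, $p\mapsto X(p)/|X(p)|$; this is an integer, and it is invariant under homotopies of $C$ that avoid singularities. The first step is to compute $I(X,\Gamma)$. Since $\Gamma$ is a periodic orbit, $X$ is everywhere tangent to $\Gamma$; because $\Gamma$ is a simple closed curve, the theorem on turning tangents (Hopf's Umlaufsatz) guarantees that the unit tangent field makes exactly one positive turn as $\Gamma$ is traversed once. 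Hence $I(X,\Gamma)=1$.

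The existence statement then follows by contradiction: if the open region $D$ bounded by $\Gamma$ contained no singularity, then $X/|X|$ would be defined on the whole closed disk $\overline{D}$, so its restriction to $\partial D=\Gamma$ would be null-homotopic and $I(X,\Gamma)=0$, contradicting $I(X,\Gamma)=1$. For the ``moreover'' part I would assume that the singularities $p_1,\dots,p_k$ inside $\Gamma$ are finite in number and surround each $p_j$ by a small circle $C_j\subset D$, chosen pairwise disjoint and with disjoint bounded interiors. On the multiply connected region obtained from $D$ by removing those interiors the field $X$ has no singularity, so additivity of the degree over its oriented boundary yields $I(X,\Gamma)=\sum_{j=1}^{k} I(X,C_j)$. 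Identifying each $I(X,C_j)$ with the Poincaré index $i_{p_j}=\frac{e-h}{2}+1$ of Appendix~\ref{index} gives $\sum_{j=1}^{k} i_{p_j}=I(X,\Gamma)=1$, as claimed.

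The main obstacle is exactly this last identification: reconciling the analytic winding-number index $I(X,C_j)$ with the combinatorial sector count $\frac{e-h}{2}+1$ that serves as the \emph{definition} of the Poincaré index in Appendix~\ref{index}. This is Bendixson's index formula, whose proof requires decomposing a punctured neighborhood of $p_j$ into hyperbolic, parabolic, and elliptic sectors and tracking the net rotation of $X$ across each one. I would invoke it from~\cite[Chapter~6]{DumLliArt2006} rather than reprove it; the remaining ingredients (turning tangents, homotopy invariance of the degree, and its additivity over a multiply connected region) are standard and would be cited from the same source.
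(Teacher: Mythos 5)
Your proof is correct, but note that the paper itself contains no proof of Proposition~\ref{LC}: it is stated in Appendix~\ref{index} as a known fact, and the reader is simply referred to \cite[Chapter~6]{DumLliArt2006}. Your degree-theoretic argument --- the index of $X$ along a periodic orbit equals $1$ by the theorem on turning tangents, absence of interior singularities would make the Gauss map extend to the closed disk and hence be null-homotopic on $\Gamma$, and additivity of the degree over the boundary of the punctured region reduces $I(X,\Gamma)$ to the sum of the indices at the singularities --- is precisely the standard argument developed in that reference, so in effect you have filled in the citation rather than taken a different route. You also correctly isolate the one nontrivial bridging step: since Appendix~\ref{index} \emph{defines} the Poincar\'e index by the sector count $\frac{e-h}{2}+1$, identifying it with the winding number $I(X,C_j)$ is exactly Bendixson's index formula, which is legitimate to quote from \cite[Chapter~6]{DumLliArt2006}; it applies here because an isolated singularity of a polynomial (hence analytic) vector field admits a finite sectorial decomposition. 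Two minor points you could make explicit: first, $X$ restricted to $\Gamma$ may point along the negative tangent direction, but composing with the fixed rotation by $\pi$ does not change the degree, so $I(X,\Gamma)=1$ in either case; second, in the existence part no finiteness or isolatedness of singularities is needed, since the contradiction only uses that $X$ is nonvanishing on the closed region, which is exactly why the proposition can state that part without any finiteness hypothesis.
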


\begin{theorem}[Poincar\'e-Hopf Theorem]\label{PH}
	Let $X$ be a planar polynomial vector field and $p(X)$ its compactification defined on $\mathbb{S}^2$. If $p(X)$ has a finite number of singularities, then the sum of their Poincar\'e indices is $2$.
\end{theorem}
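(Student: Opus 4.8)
The plan is to reduce the statement to the classical planar Poincaré index theorem---that the index of $X$ along a simple closed curve (the winding number, i.e. the degree, of the normalized field $X/|X|$ as one traverses the curve) equals the sum of the Poincaré indices of the singularities it encloses---and then to apply this on both hemispheres of $\mathbb{S}^2$. Since $p(X)$ has only finitely many singularities, I would first choose a simple closed curve $\gamma\subset\mathbb{S}^2$ avoiding all of them and splitting $\mathbb{S}^2$ into two closed topological disks $D_1$ and $D_2$ with $\partial D_1=\partial D_2=\gamma$, arranged so that each pole is interior to one disk. This is always possible because the singular set is finite.

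Next I would use the two stereographic charts from the poles (equivalently, suitable charts $U_i,V_i$ of Appendix~\ref{PC}) to express $p(X)$ as a genuine planar field on each disk. Applying the planar index theorem in the chart adapted to $D_j$ gives $\sum_{p\in D_j} i_p=\deg_\gamma(X)$, the winding number of the field along $\gamma$ computed in that chart. This first step is essentially local and routine: the sectorial formula $i_p=\frac{e-h}{2}+1$ of Appendix~\ref{index} agrees with the winding-number index at each isolated singularity, so each $\sum_{p\in D_j}i_p$ is exactly the boundary degree.

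The crux is to compare the two boundary degrees. The charts adapted to $D_1$ and $D_2$ overlap on an annular neighborhood of $\gamma$, where they are related by the inversion transition map $T$ of the Poincaré sphere, and $\gamma$ inherits \emph{opposite} orientations as the boundary of $D_1$ and of $D_2$. Pushing the field from one chart to the other introduces the Jacobian $DT$, so the two degrees do not simply cancel: writing $X_2=DT\cdot X_1$ along $\gamma$ and using that the winding of such a product adds, one finds that the two contributions combine into the rotation number of $DT$ along $\gamma$. I expect this to be the main obstacle---carefully tracking how the components of $X$ transform under $T$, and how the two induced orientations of $\gamma$ interact with the sign of $\deg(DT)$---because it is precisely here that the nontrivial topology of $T\mathbb{S}^2$ enters and forces the correction term to equal $2$ rather than $0$.

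Adding the two contributions then yields $\sum_{\text{all }p} i_p=2$, which is the assertion. As an independent check---and an alternative route that sidesteps the transition-map bookkeeping---one can instead verify the value $2$ on a single model field, for instance the north--south gradient flow of the height function on $\mathbb{S}^2$, which has a source and a sink of index $+1$ each, and then invoke the homotopy invariance of the total index under deformations through fields with isolated zeros. The substantive content of that alternative is the invariance itself, which is morally equivalent to the orientation-and-Jacobian computation above; for the purposes of this paper the cleanest self-contained write-up is the two-disk argument, with the numerical value pinned down by the model field.
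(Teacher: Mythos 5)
First, a point of comparison: the paper does not actually prove Theorem~\ref{PH}. It is recalled in Appendix~\ref{index} as a classical fact, and for its proof (together with Proposition~\ref{LC}) the authors simply refer to \cite[Chapter~$6$]{DumLliArt2006}. So your proposal can only be measured against the classical argument in that reference, and your two-disk plan is essentially that classical argument: split $\mathbb{S}^2$ by a curve $\gamma$ avoiding the finitely many singularities, apply the planar index theorem in a chart adapted to each disk, and account for the transition map. The overall structure is correct, and you have correctly identified where the number $2$ must come from.

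That said, the step you yourself flag as the crux --- that the two boundary degrees combine into the rotation of $DT$ along $\gamma$, and that this rotation forces the total to be $2$ --- is asserted rather than carried out, and it is the entire content of the theorem, so as written this is a correct plan rather than a complete proof. The missing computation is short once the charts are chosen well: take the two charts to be stereographic projections realizing $\mathbb{S}^2$ as the Riemann sphere, so that the transition map on the overlap is $w=T(z)=1/z$ and $DT(z)$ acts as complex multiplication by $-1/z^2$ (in particular orientation preserving), and take $\gamma=\{|z|=1\}$ with $D_1=\{|z|\leqslant1\}$, $D_2=\{|w|\leqslant1\}$. The planar index theorem in the $z$-chart gives $\sum_{p\in D_1} i_p = n_1$, the winding of $X_1$ along $\gamma$ traversed counterclockwise in $z$. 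In the $w$-chart, $\partial D_2$ is $\gamma$ traversed counterclockwise in $w$, which is clockwise in $z$, and $X_2=(-1/z^2)\,X_1$ along $\gamma$; since windings of products of nonvanishing complex-valued maps add, and the winding of $-1/z^2$ along $\gamma$ (counterclockwise in $z$) equals $-2$, one gets $\sum_{p\in D_2} i_p = -\bigl((-2)+n_1\bigr) = 2-n_1$, whence the total is $2$. Two facts you use implicitly should also be made explicit or cited: that the sectorial index $i_p=\frac{e-h}{2}+1$ of Appendix~\ref{index} coincides with the winding-number index (Bendixson's index formula), and the planar index theorem itself; both appear in \cite[Chapter~$6$]{DumLliArt2006}. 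Finally, your fallback route (a model field with indices $1+1$ plus homotopy invariance of the total index) is also classical, but, as you note, the invariance is precisely what the transition-map computation establishes, so invoking it without proof would simply relocate the same gap.
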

For a proof of Proposition~\ref{LC} and Theorem~\ref{PH} we refer to~\cite[Chapter~$6$]{DumLliArt2006}. 

\section{The Cardano-Tartaglia Formula}\label{TGSection}

Given a cubic equation
\begin{equation}\label{TG}
	x^3+px +q=0,
\end{equation}
if we make the change of variable $x=u+v$ on \eqref{TG} we obtain
	\[u^3+v^3+(3uv+p)(u+v)+q=0.\]
Assuming that $3uv+p=0$ one obtain $u^3+v^3=-q$. The combination of the equations
	\[u^3v^3=-\frac{1}{27}p^3, \quad u^3+v^3=-q\]
imply that $u^3$ and $v^3$ are the zeros of 
	\[z^2+qz-\frac{1}{27}p^3=0.\]
Therefore,
	\[u^3=-\frac{q}{2}+\left(\frac{q^2}{4}+\frac{p^3}{27}\right)^\frac{1}{2}, \quad v^3=-\frac{q}{2}-\left(\frac{q^2}{4}+\frac{p^3}{27}\right)^\frac{1}{2}.\]

\begin{remark}
	We observe that $x^\frac{1}{n}$ denotes the standard $nth$ root of $x$, i.e. if $x=re^{i\theta}$, then $x^\frac{1}{n}=\sqrt[n]{r}e^{i\frac{\theta}{n}}$, where $\sqrt[n]{x}$ denotes the $nth$ real root (when it exists) of $x$. For example, $8^\frac{1}{3}=\sqrt[3]{8}=2$, but $(-8)^\frac{1}{3}=1+i\sqrt{3}$ while $\sqrt[3]{-8}=-2$. In general both functions coincide when $x$ is a non-negative real number.
\end{remark}

Let $D=\frac{1}{4}q^2+\frac{1}{27}p^3$ and observe that if $D\geqslant0$, then $u^3$ and $v^3$ are real numbers and thus we can take
	\[\mu=\sqrt[3]{u^3}, \quad \nu=\sqrt[3]{v^3}.\]
Observe that these values do satisfy $\mu^3+\nu^3=-q$ and $3\mu\nu+p=0$. Hence 
	\[x=\mu+\nu=\sqrt[3]{-\frac{q}{2}+\left(\frac{q^2}{4}+\frac{p^3}{27}\right)^\frac{1}{2}}+\sqrt[3]{-\frac{q}{2}-\left(\frac{q^2}{4}+\frac{p^3}{27}\right)^\frac{1}{2}}\]
is a (real) solution of \eqref{TG}. Let $\xi = e^{i\frac{\pi}{3}}$ be a root of the unity and observe that the three solutions of \eqref{TG}, when $D\geqslant0$, are given by
	\[x_k=\xi^k\sqrt[3]{-\frac{q}{2}+\left(\frac{q^2}{4}+\frac{p^3}{27}\right)^\frac{1}{2}}+\xi^{2k}\sqrt[3]{-\frac{q}{2}-\left(\frac{q^2}{4}+\frac{p^3}{27}\right)^\frac{1}{2}},\]
where $k\in\{0,1,2\}$. If $D<0$ we cannot take any third root of $u^3$ and $v^3$ because not every choosing of roots satisfy $3\mu\nu+p=0$. To satisfy this it is enough that $u$ be the conjugate of $v$. Therefore we can take
	\[\mu=\left(u^3\right)^\frac{1}{3}, \quad \nu=\left(v^3\right)^\frac{1}{3}.\]
Hence the solutions of \eqref{TG} when $D<0$ are given by 
	\[x_k=\xi^k\left(-\frac{q}{2}+\left(\frac{q^2}{4}+\frac{p^3}{27}\right)^\frac{1}{2}\right)^\frac{1}{3}+\xi^{2k}\left(-\frac{q}{2}-\left(\frac{q^2}{4}+\frac{p^3}{27}\right)^\frac{1}{2}\right)^\frac{1}{3},\]
where $k\in\{0,1,2\}$. Observe that if we take these roots when $D\geqslant0$, then $\mu$ is not necessarily the conjugate of $\nu$ (for example if both $u^3$ and $v^3$ are negative real numbers) and thus we would not have a solution of \eqref{TG}. At the other hand, we cannot always apply the $\sqrt[3]{\cdot}$ when $D<0$ because not every complex number has a third real root.

If we denote by $S$ and $T$ the \emph{convenient root} of $u^3$ and $v^3$ (i.e. denotes the root according to the logic of this section), respectively, then the solutions of \eqref{TG} are given by
\[\begin{cases}
	x_1=S+T. \\
	x_2=-\frac{1}{2}(S+T)+\frac{1}{2}\sqrt{3}(S-T)i. \\
	x_3=-\frac{1}{2}(S+T)-\frac{1}{2}\sqrt{3}(S-T)i.
\end{cases}\]
One can see that if $D<0$, then all solutions are real and simple. If $D=0$ and $q\neq0$, then $x_1$ is a real simple solution and $x_2=x_3$ is a double real solution. If $D=q=0$, then $x_1=x_2=x_3$ is a real triple solution. Finally if $D>0$, then $x_1$ is a real simple solution and $x_2$, $x_3$ are two complex conjugates solutions.

\begin{figure}[ht]
	\begin{center}
		\begin{minipage}{3cm}
			\begin{center}
				\includegraphics[width=3cm]{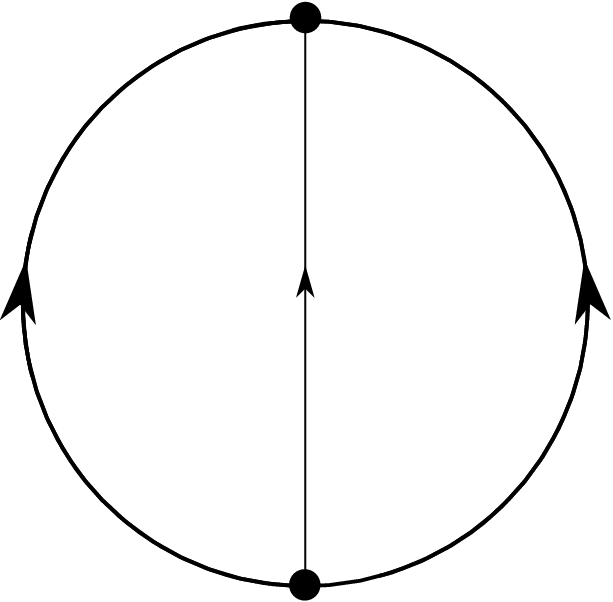}
				
				$X_{01}$
			\end{center}	
		\end{minipage}
		\begin{minipage}{3cm}
			\begin{center}
				\includegraphics[width=3cm]{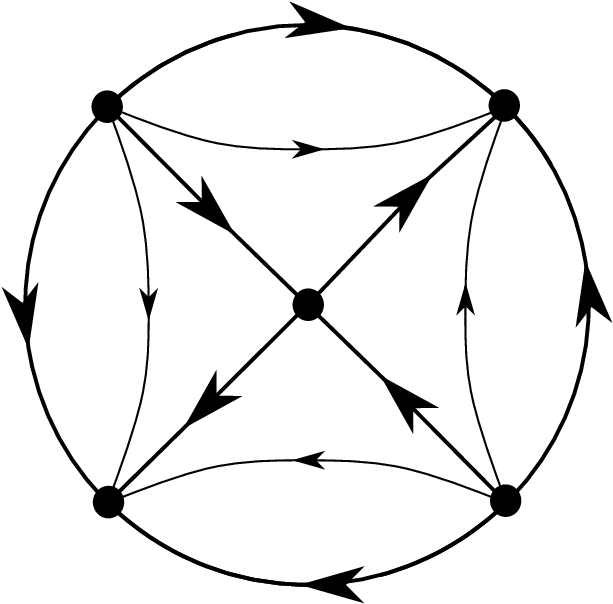}
				
				$X_{02}$, $\delta=1$
			\end{center}
		\end{minipage}
		\begin{minipage}{3cm}
			\begin{center}
				\includegraphics[width=3cm]{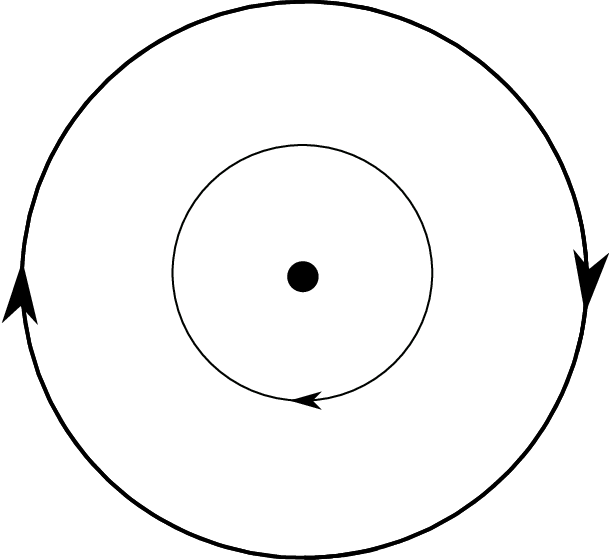}
				
				$X_{02}$, $\delta=-1$
			\end{center}
		\end{minipage}
	\end{center}
	
	\begin{center}
		\begin{minipage}{3cm}		
			\begin{center}
				\includegraphics[width=3cm]{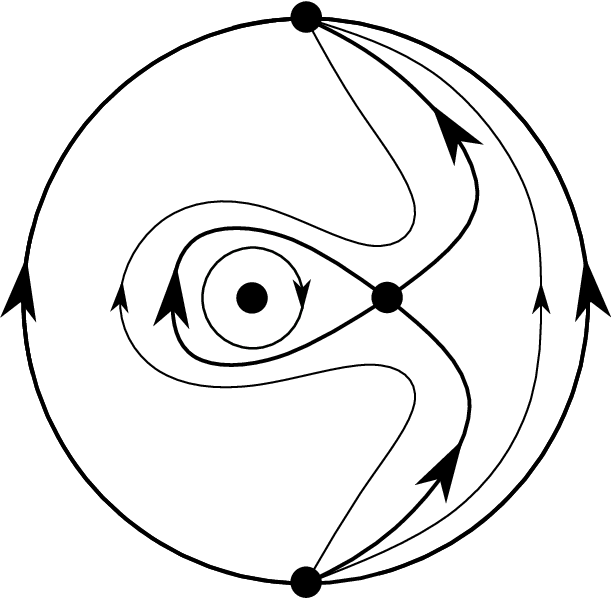}
				
				$X_{11}$, $\lambda<0$
			\end{center}		
		\end{minipage}
		\begin{minipage}{3cm}	
			\begin{center}
				\includegraphics[width=3cm]{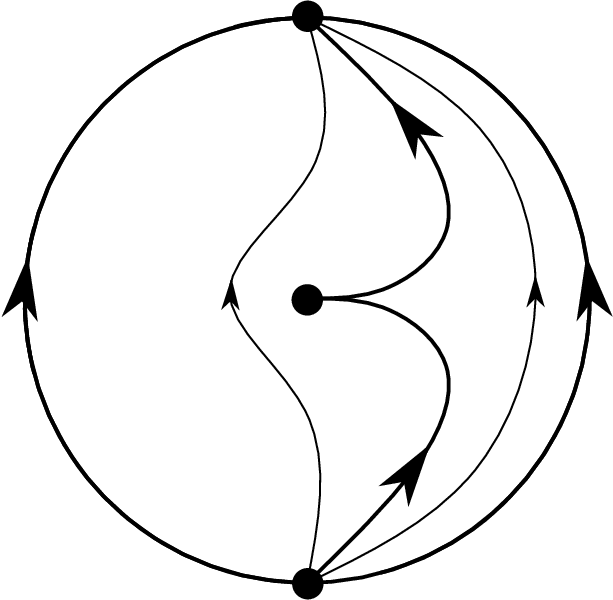}
				
				$X_{11}$, $\lambda=0$
			\end{center}		
		\end{minipage}
		\begin{minipage}{3cm}		
			\begin{center}
				\includegraphics[width=3cm]{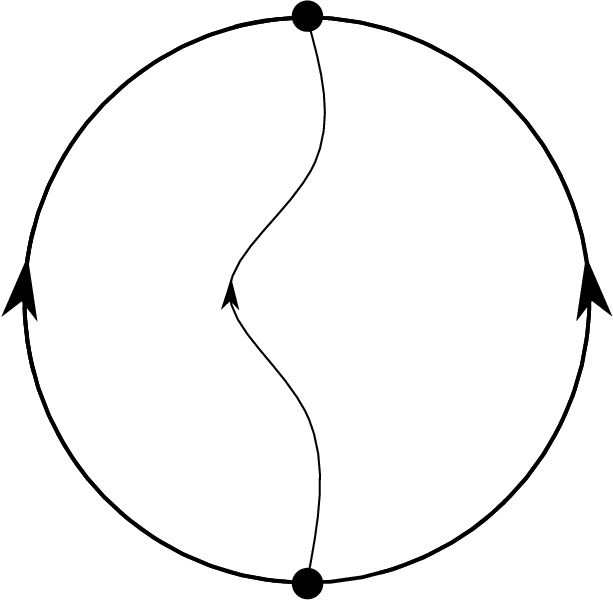}
				
				$X_{11}$, $0<\lambda$
			\end{center}		
		\end{minipage}
	\end{center}
	
	\begin{center}
		\begin{minipage}{3cm}
			\begin{center}
				\includegraphics[width=3cm]{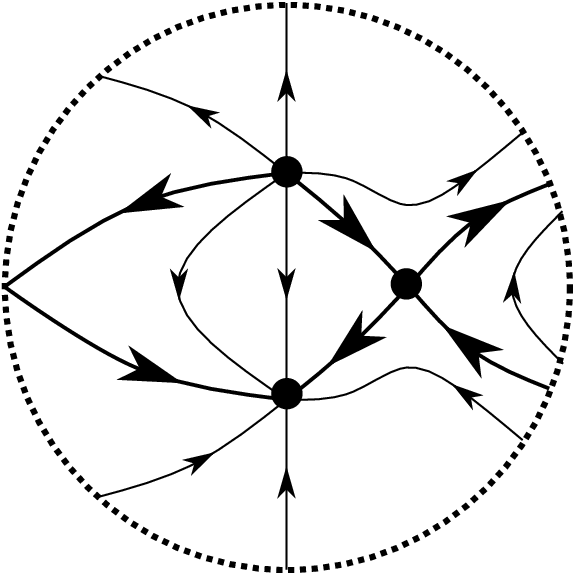}
				
				$X_{12}$
				
				$\lambda<0, \; \delta=1$
			\end{center}		
		\end{minipage}
		\begin{minipage}{3cm}		
			\begin{center}
				\includegraphics[width=3cm]{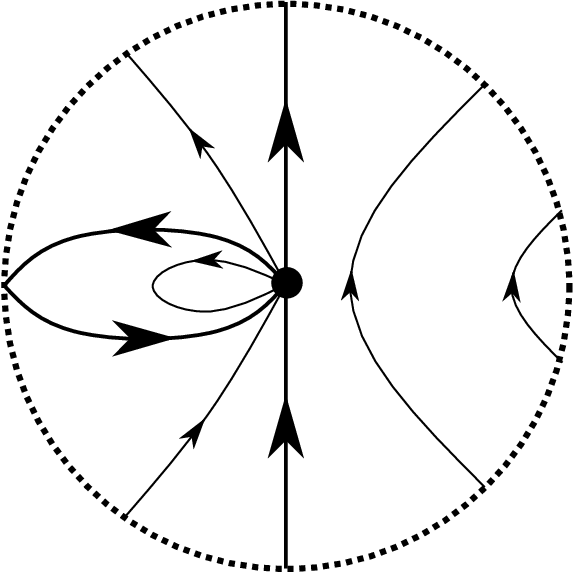}
				
				$X_{12}$
				
				$\lambda=0, \; \delta=1$
			\end{center}		
		\end{minipage}
		\begin{minipage}{3cm}		
			\begin{center}
				\includegraphics[width=3cm]{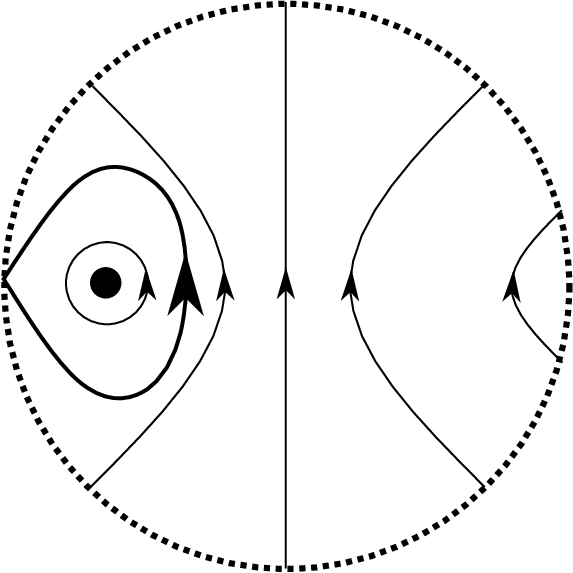}
				
				$X_{12}$
				
				$0<\lambda, \; \delta=1$
			\end{center}		
		\end{minipage}
	\end{center}
	
	\begin{center}
		\begin{minipage}{3cm}	
			\begin{center}
				\includegraphics[width=3cm]{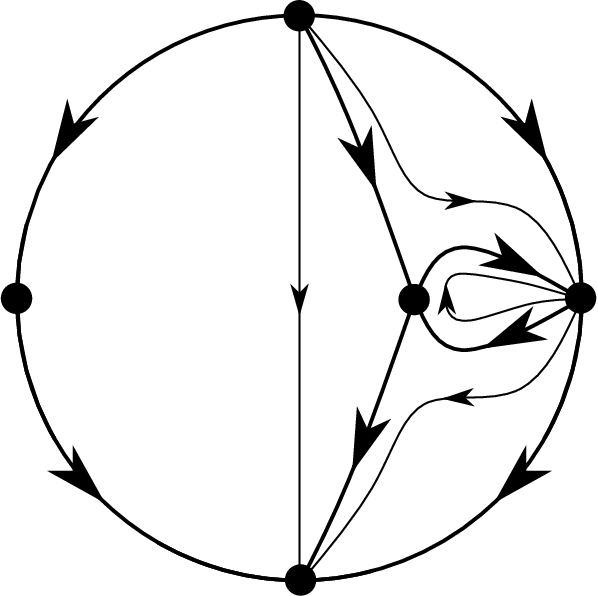}
				
				$X_{13}$, $\lambda<0$
			\end{center}		
		\end{minipage}
		\begin{minipage}{3cm}		
			\begin{center}
				\includegraphics[width=3cm]{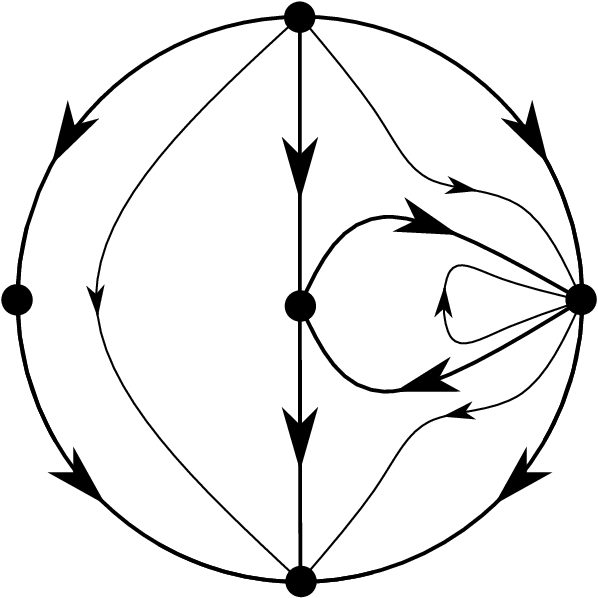}
				
				$X_{13}$, $\lambda=0$
			\end{center}		
		\end{minipage}
		\begin{minipage}{3cm}	
			\begin{center}
				\includegraphics[width=3cm]{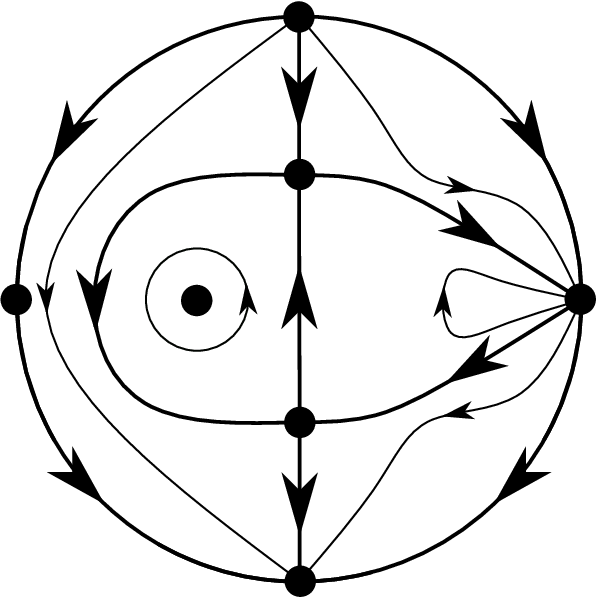}
				
				$X_{13}$, $0<\lambda$
			\end{center}		
		\end{minipage}
	\end{center}
	
	\begin{center}
		\begin{minipage}{3cm}
			\begin{center}
				\includegraphics[width=3cm]{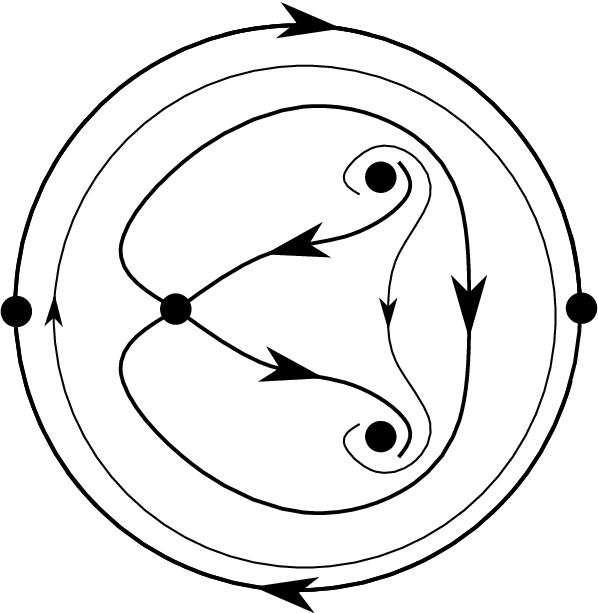}
				
				$X_{14}$, $\lambda<0$
			\end{center}		
		\end{minipage}	
		\begin{minipage}{3cm}	
			\begin{center}
				\includegraphics[width=3cm]{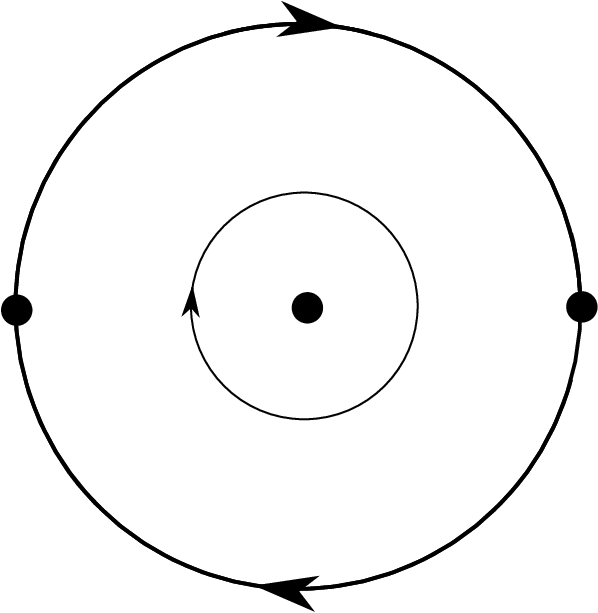}
				
				$X_{14}$, $0\leqslant\lambda$
			\end{center}
		\end{minipage}
	\end{center}
	\caption{Phase portraits of $X_{01}$, $X_{02}$, $X_{11}$, $X_{12}$, $X_{13}$ and $X_{14}$.}\label{cod0and1}
\end{figure}

\begin{figure}[ht]	
	\begin{center}
		\begin{overpic}[width=12cm]{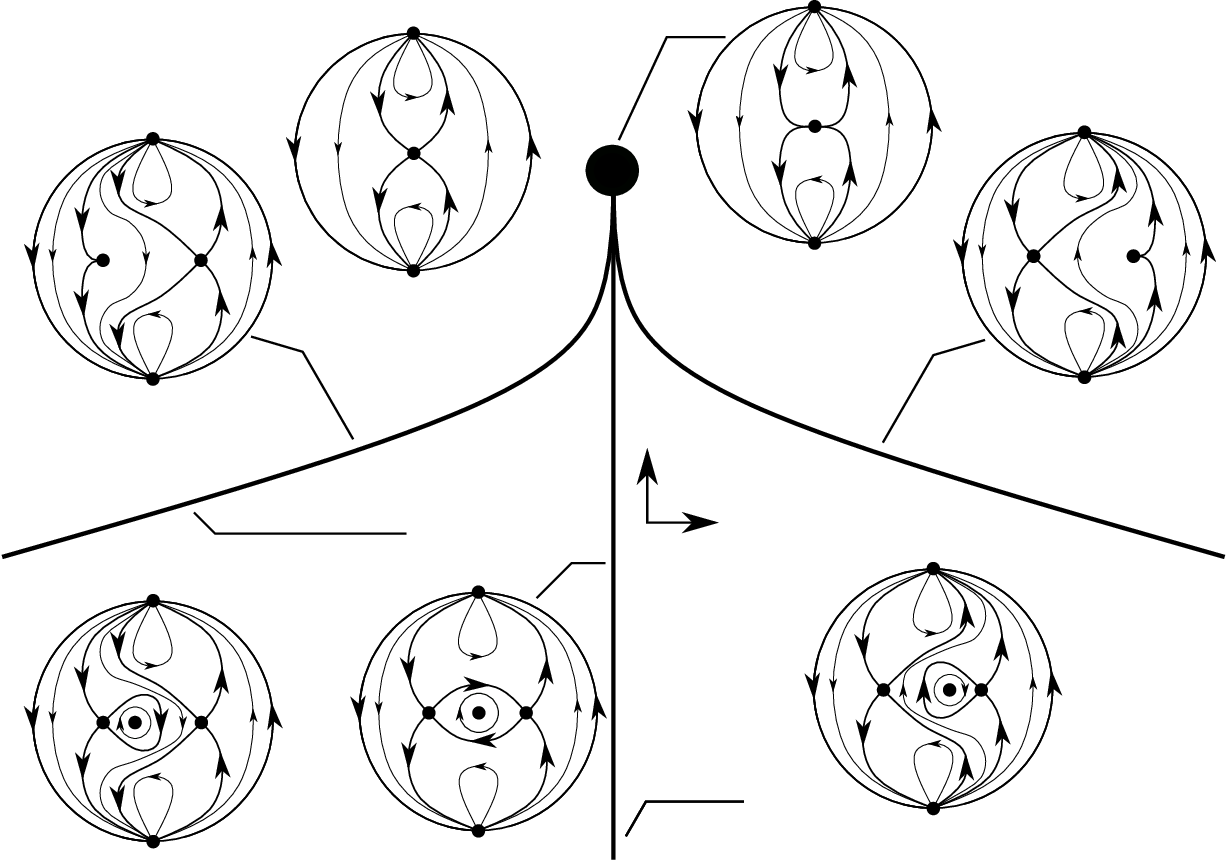} 
			\put(53.5,32){$\beta$}
			\put(58,28){$\alpha$}
			\put(54,5.5){$\alpha=0$}
			\put(20,27.5){$27\alpha^2+4\beta^3=0$}
		\end{overpic}
	\end{center}	
	\caption{Phase portrait of $X_{21}$ with $b=1$.}\label{21a}
	\vspace{0.5cm}
	\begin{center}
		\begin{overpic}[width=12cm]{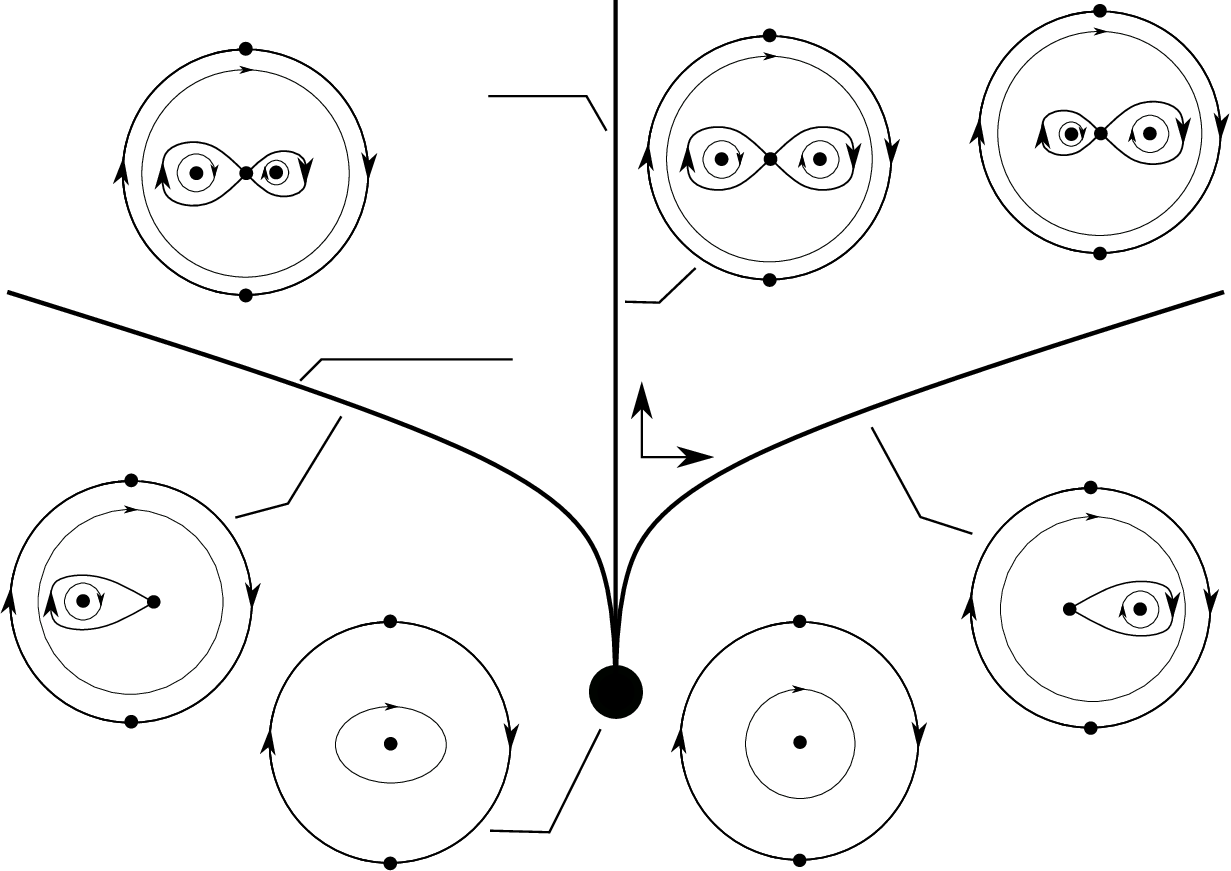} 
			\put(53,38){$\beta$}
			\put(57,34){$\alpha$}
			\put(39,63.5){$\alpha=0$}
			\put(26,42.5){$27\alpha^2+4\beta^3=0$}
		\end{overpic}
	\end{center}	
	\caption{Bifurcation diagram of $X_{21}$ with $b=-1$.}\label{21b}
\end{figure}

\begin{figure}[ht]	
	\begin{center}
	\begin{overpic}[width=12cm]{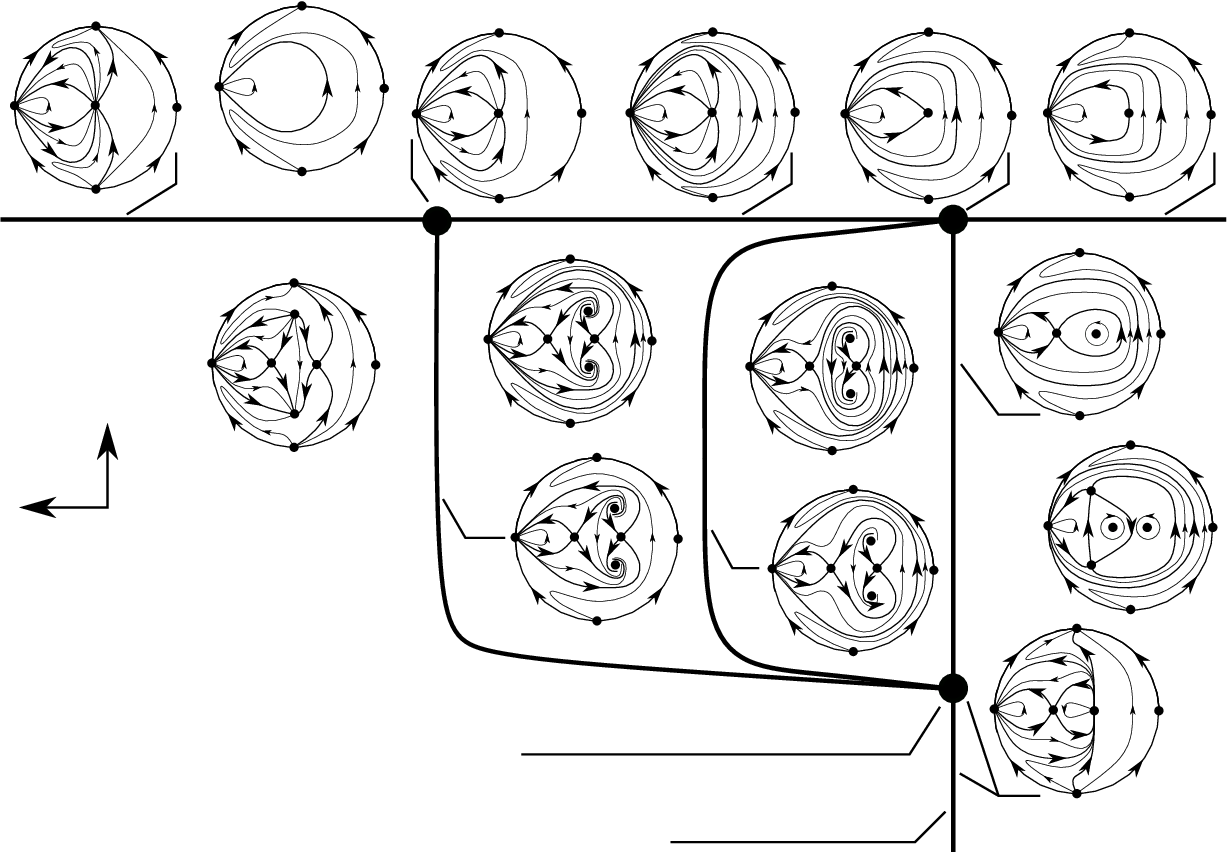} 
			\put(6,33){$\alpha$}
			\put(2,29){$\beta$}
			\put(55,2){$\beta=-1$}
			\put(1,49){$\alpha=0$}
			\put(42,9){$(\alpha,\beta)=(-16,-1)$}
		\end{overpic}
	\end{center}	
	\caption{Bifurcation diagram of $X_{22a}$ with $a=1$.}\label{22a}
	\vspace{0.5cm}
	\begin{center}
		\begin{overpic}[width=12cm]{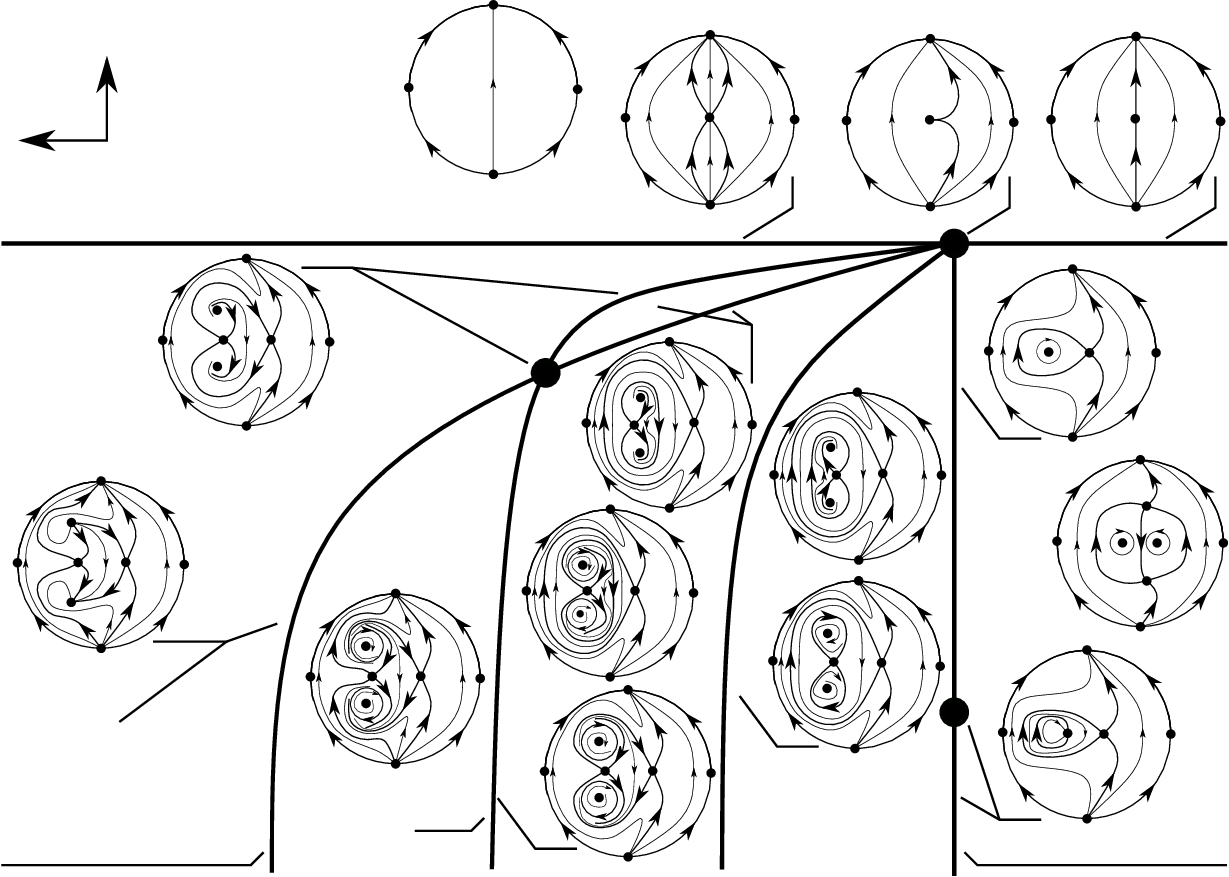} 
			\put(6,65){$\alpha$}
			\put(2,61){$\beta$}
			\put(0,2){$\beta=1+2\sqrt{-\alpha}$}
			\put(93,2){$\beta=1$}
			\put(1,49){$\alpha=0$}
			\put(25.5,3){$\beta_1(\alpha)$}
		\end{overpic}
	\end{center}	
	\caption{Bifurcation diagram of $X_{22a}$ with $a=-1$.}\label{22b}
\end{figure}

\begin{figure}[ht]	
	\begin{center}
		\begin{overpic}[width=12cm]{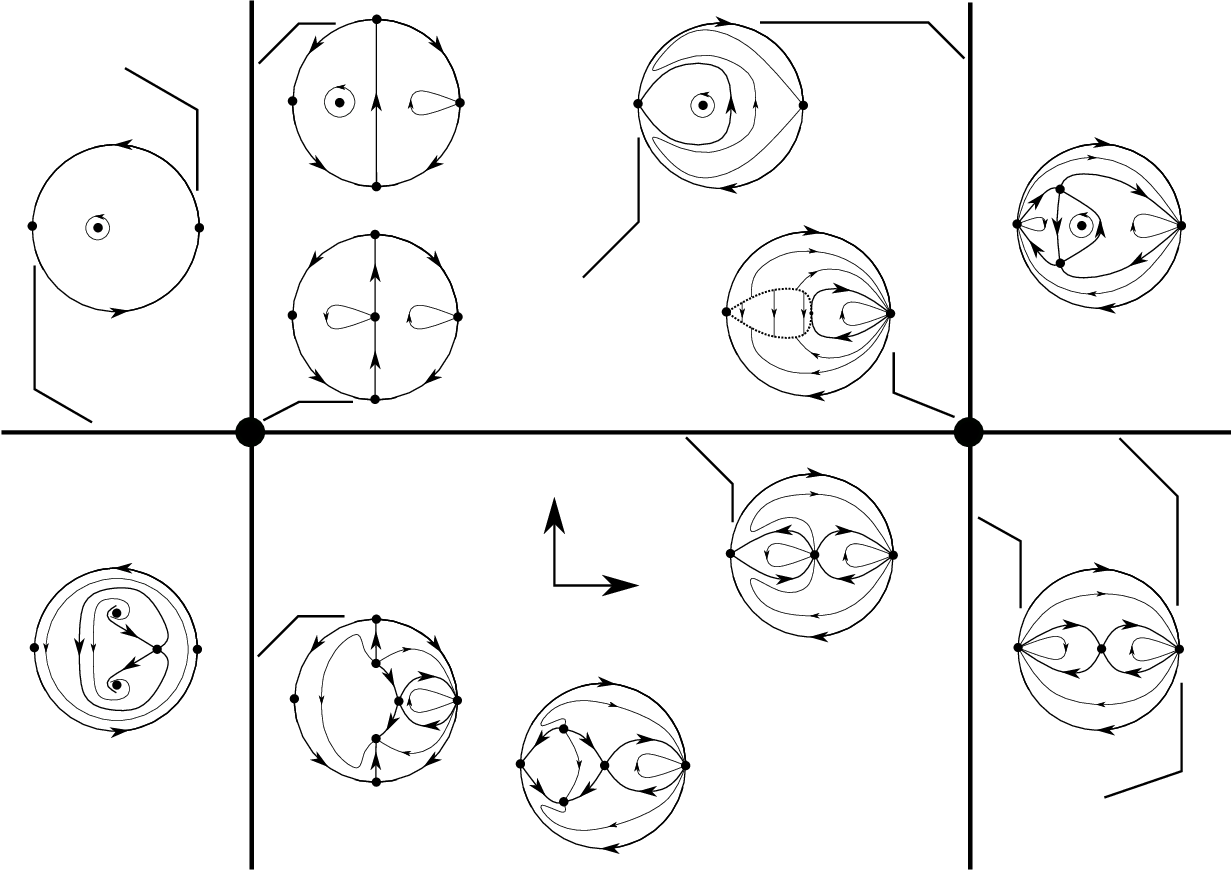} 
			\put(50,24){$\alpha$}
			\put(46,29){$\beta$}
			\put(21,1){$\alpha=0$}
			\put(79.5,1){$\alpha=1$}
			\put(90,37){$\beta=0$}
		\end{overpic}
	\end{center}	
	\caption{Bifurcation diagram of $X_{24a}$ with $a=1$.}\label{24a}
	\vspace{0.5cm}	
	\begin{center}
		\begin{overpic}[width=12cm]{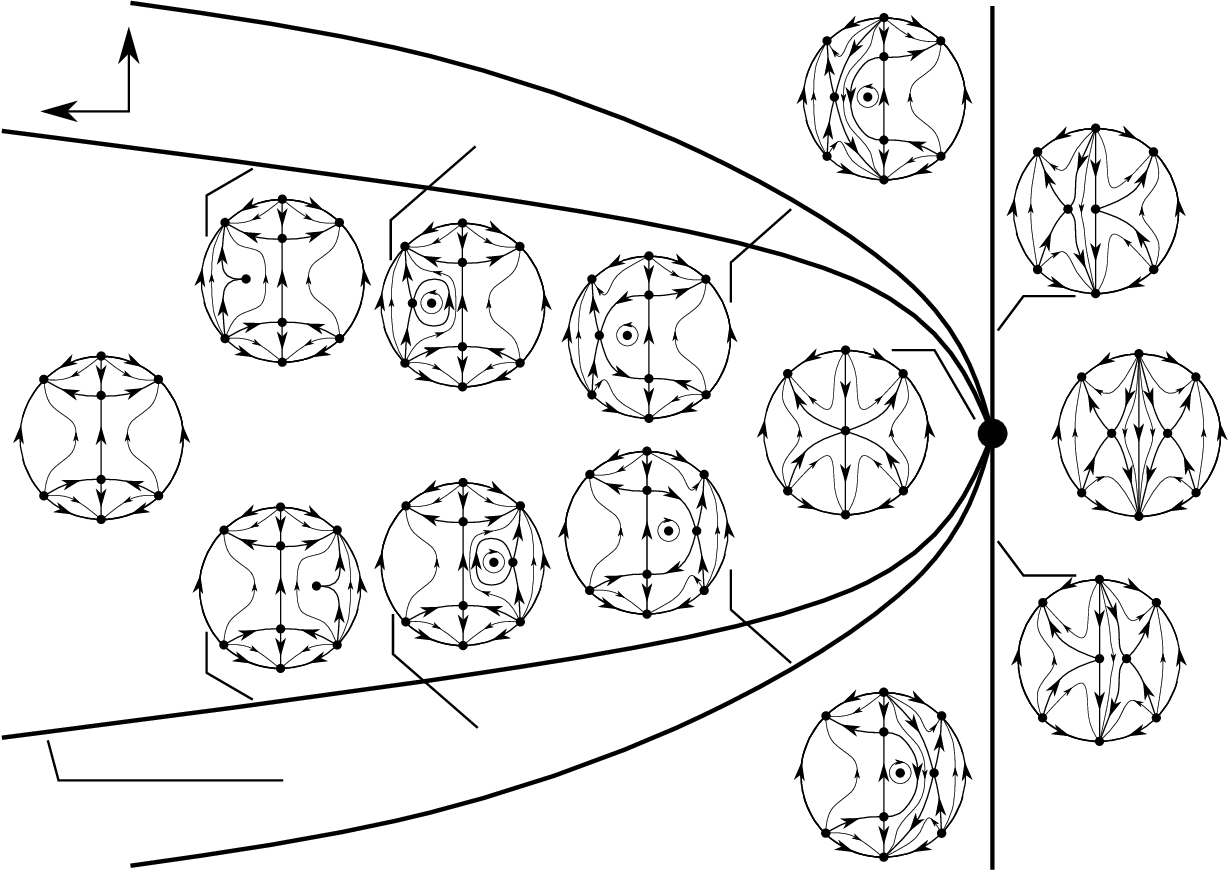} 
			\put(8,9){$\beta=\frac{1}{4}\alpha^2$}
			\put(4,63){$\beta$}
			\put(7.5,67){$\alpha$}
			\put(81,1){$\beta=0$}
		\end{overpic}
	\end{center}	
	\caption{Bifurcation diagram of $X_{25a}$ with $a=1$.}\label{25a1}
\end{figure}

\begin{figure}[ht]	
	\begin{center}
		\begin{overpic}[width=12cm]{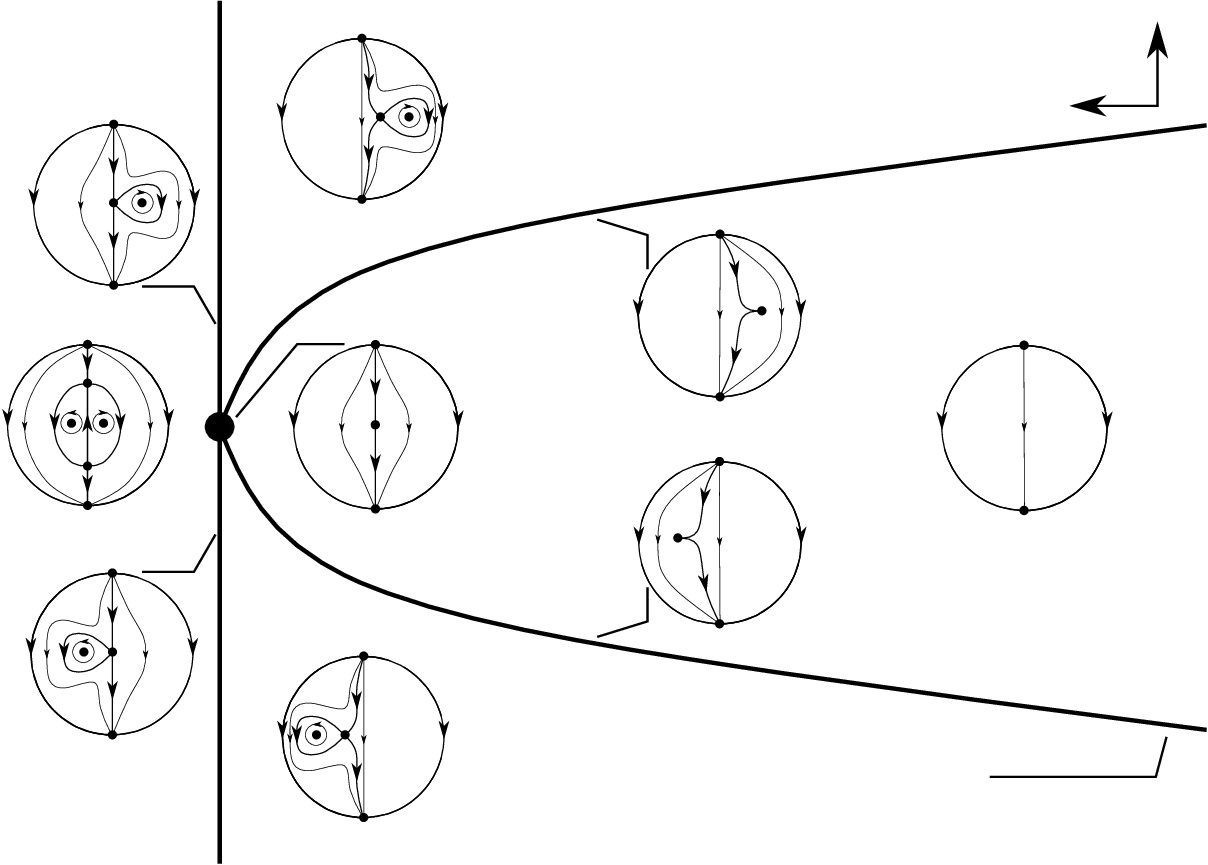} 
			\put(80,8.5){$\beta=-\frac{1}{4}\alpha^2$}
			\put(89,64){$\beta$}
			\put(93,68){$\alpha$}
			\put(10,69){$\beta=0$}
		\end{overpic}
	\end{center}	
	\caption{Bifurcation diagram of $X_{25a}$ with $a=-1$.}\label{25a2}
	\vspace{0.5cm}	
	\begin{center}
		\begin{overpic}[width=12cm]{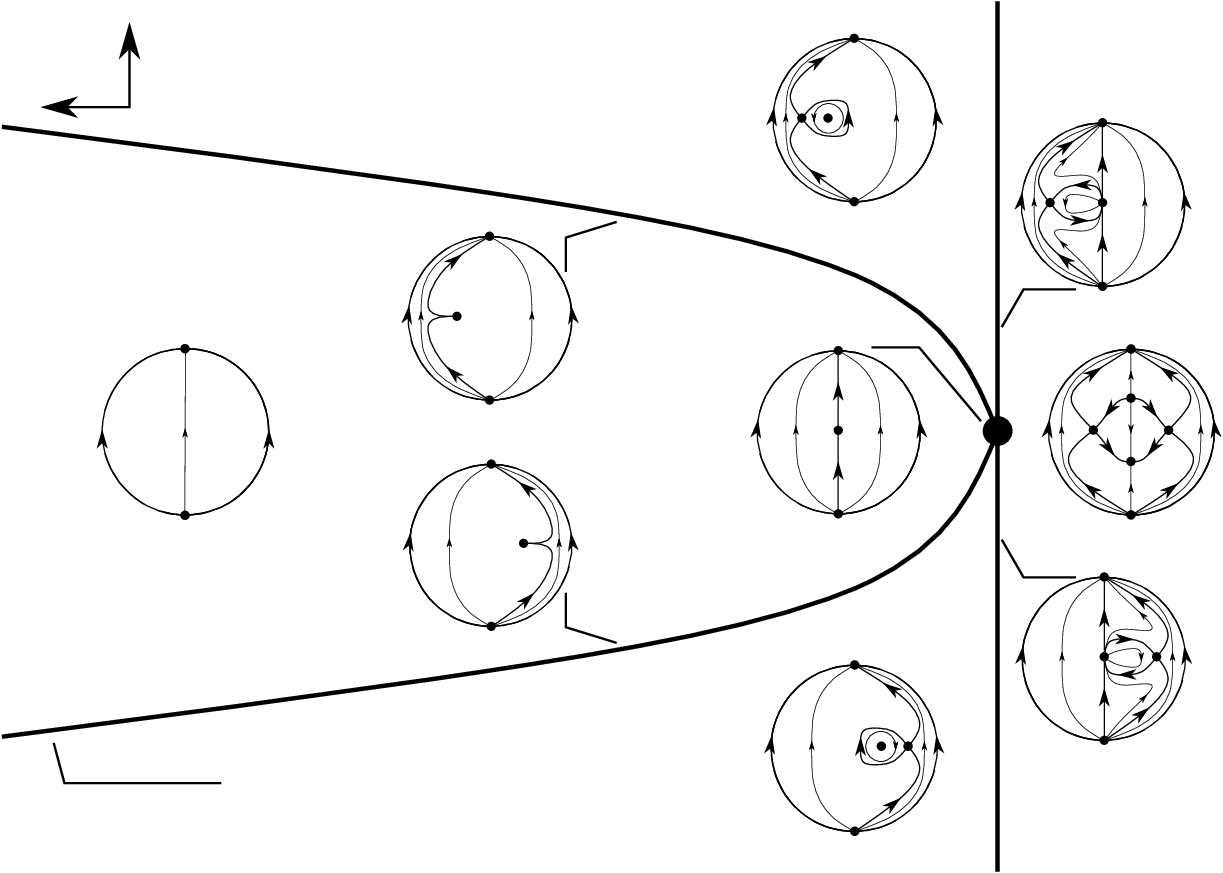} 
			\put(8,68){$\alpha$}
			\put(4,64){$\beta$}
			\put(6,9){$\beta=\frac{1}{12}\alpha^2$}
			\put(82,1){$\beta=0$}
		\end{overpic}
	\end{center}	
	\caption{Bifurcation diagram of $X_{25b}$ with $(a,b,\delta)=(1,3,3)$.}\label{25b1}
\end{figure}

\begin{figure}[ht]	
	\begin{center}
		\begin{overpic}[width=12cm]{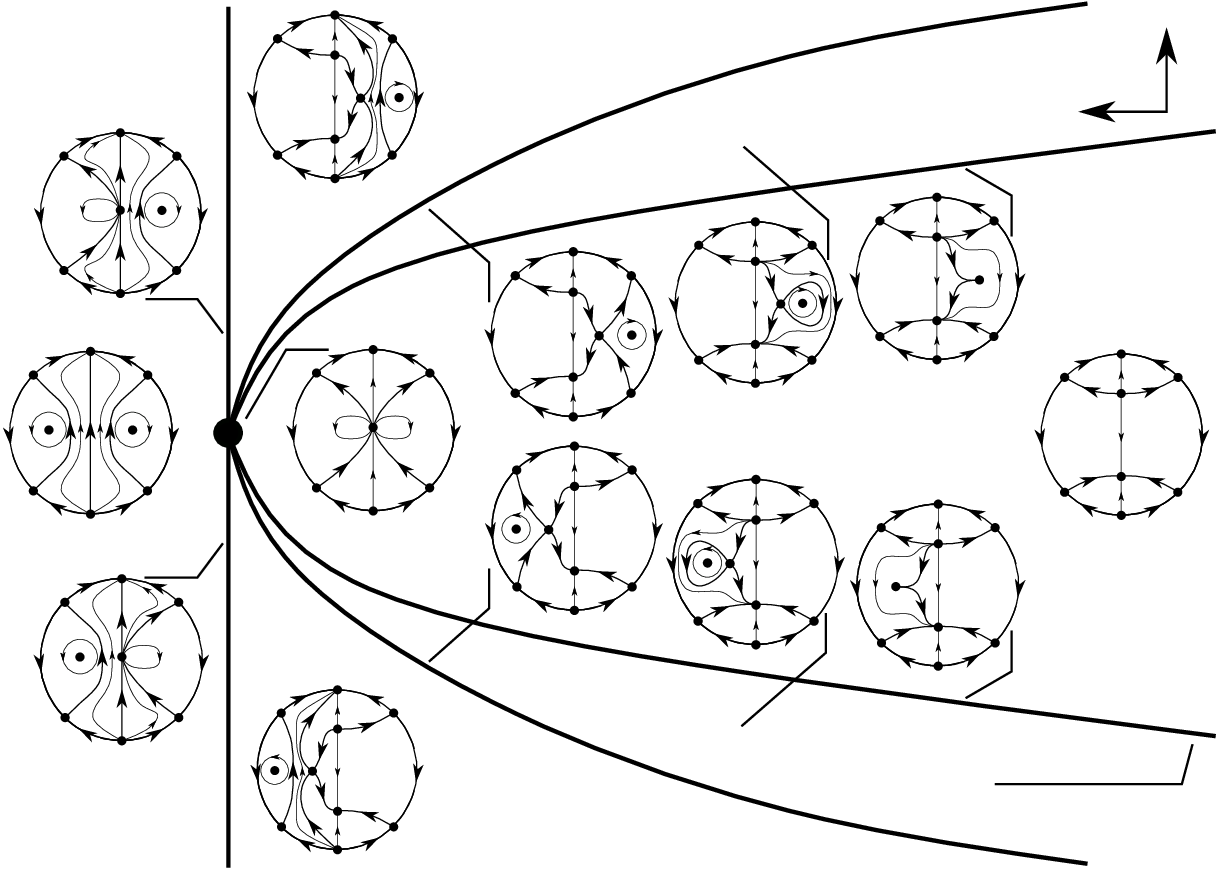} 
			\put(93,68){$\alpha$}
			\put(89,63.5){$\beta$}
			\put(80,8.5){$\beta=-\frac{1}{12}\alpha^2$}
			\put(10,69){$\beta=0$}
		\end{overpic}
	\end{center}	
	\caption{Bifurcation diagram of $X_{25b}$ with $(a,b,\delta)=(1,3,-3)$.}\label{25b2}
	\vspace{0.5cm}	
	\begin{center}
		\begin{overpic}[width=12cm]{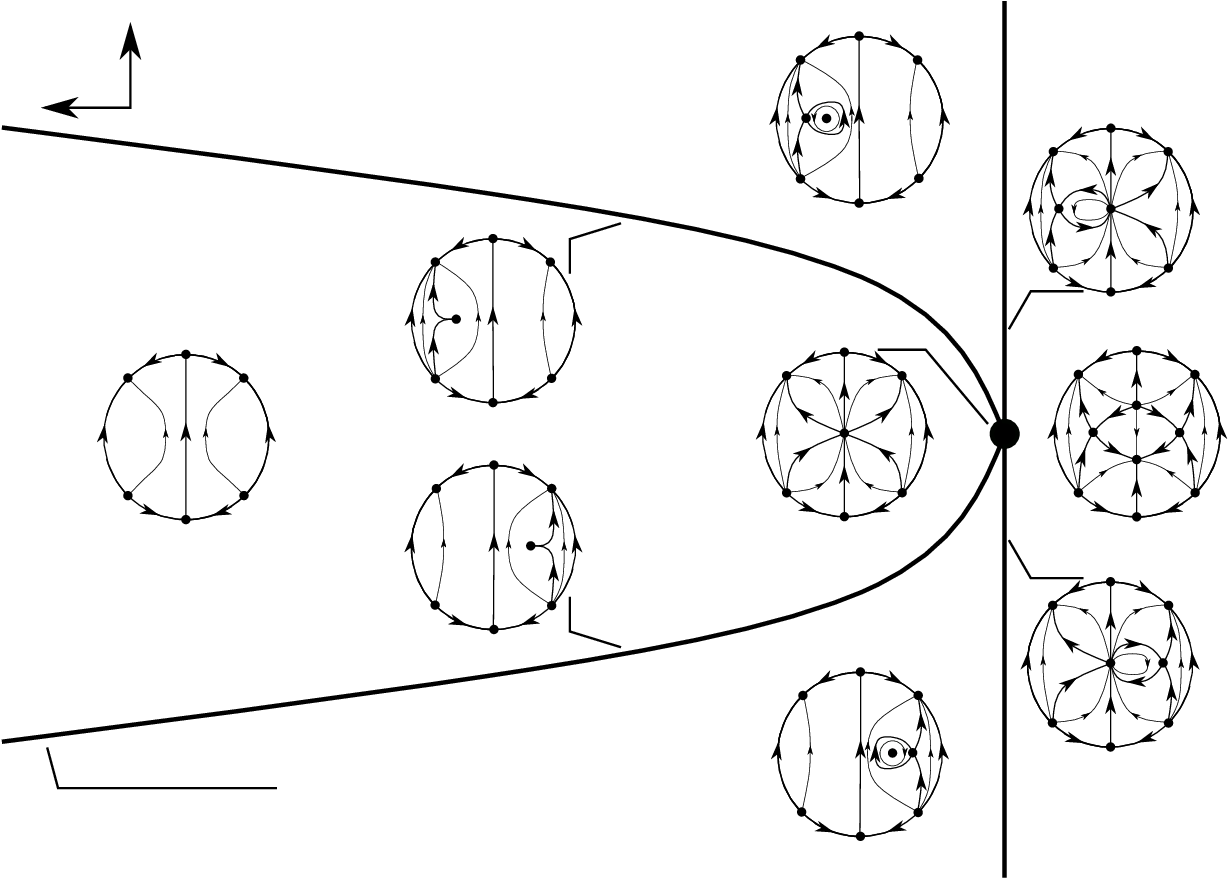} 
			\put(7.5,68){$\alpha$}
			\put(4,64){$\beta$}
			\put(8,9){$\beta=\frac{1}{12}\alpha^2$}
			\put(82,1){$\beta=0$}
		\end{overpic}
	\end{center}	
	\caption{Bifurcation diagram of $X_{25b}$ with $(a,b,\delta)=(3,1,3)$.}\label{25b3}
\end{figure}

\begin{figure}[ht]	
	\begin{center}
		\begin{overpic}[width=12cm]{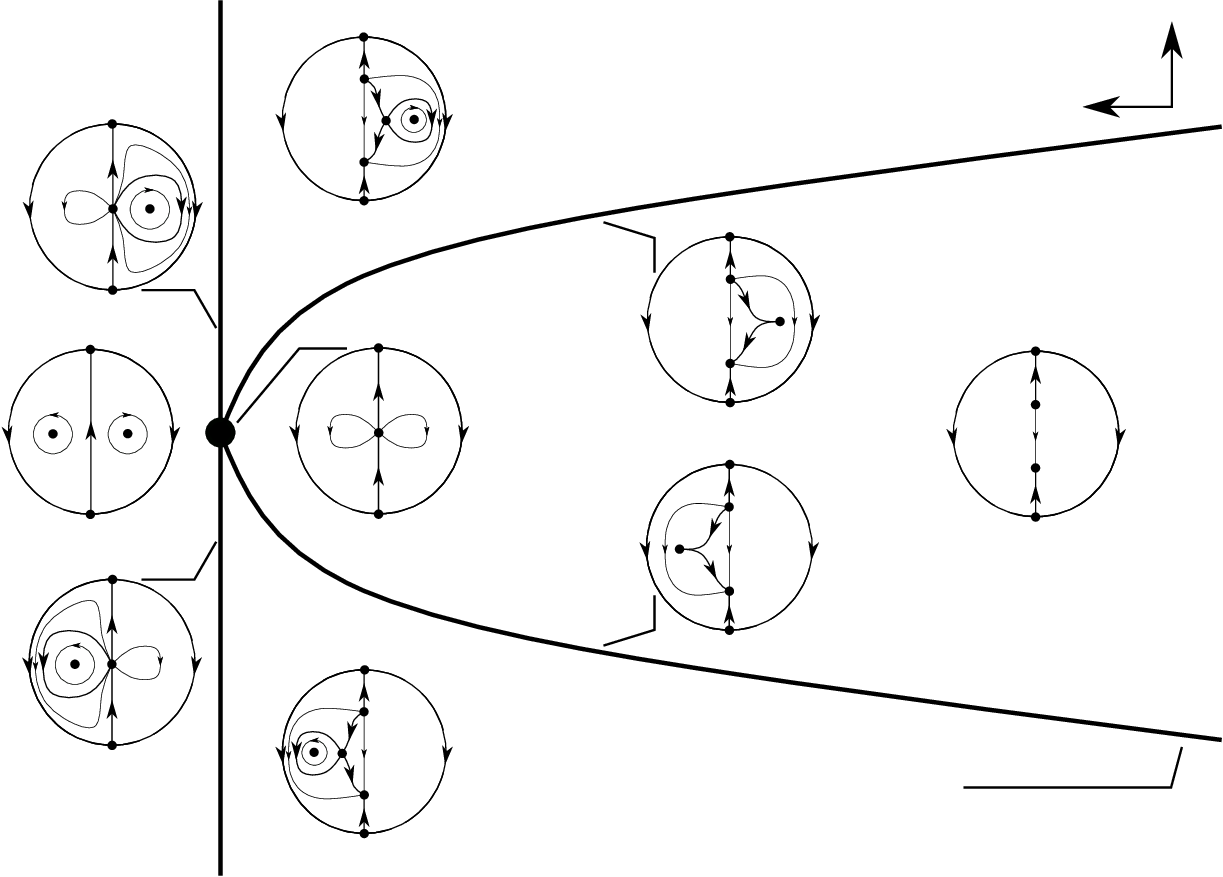} 
			\put(93,68){$\alpha$}
			\put(89,64){$\beta$}
			\put(79,8.5){$\beta=-\frac{1}{12}\alpha^2$}
			\put(9.5,69){$\beta=0$}
		\end{overpic}
	\end{center}	
	\caption{Bifurcation diagram of $X_{25b}$ with $(a,b,\delta)=(3,1,-3)$.}\label{25b4}
	\begin{center}
		\begin{overpic}[width=12cm]{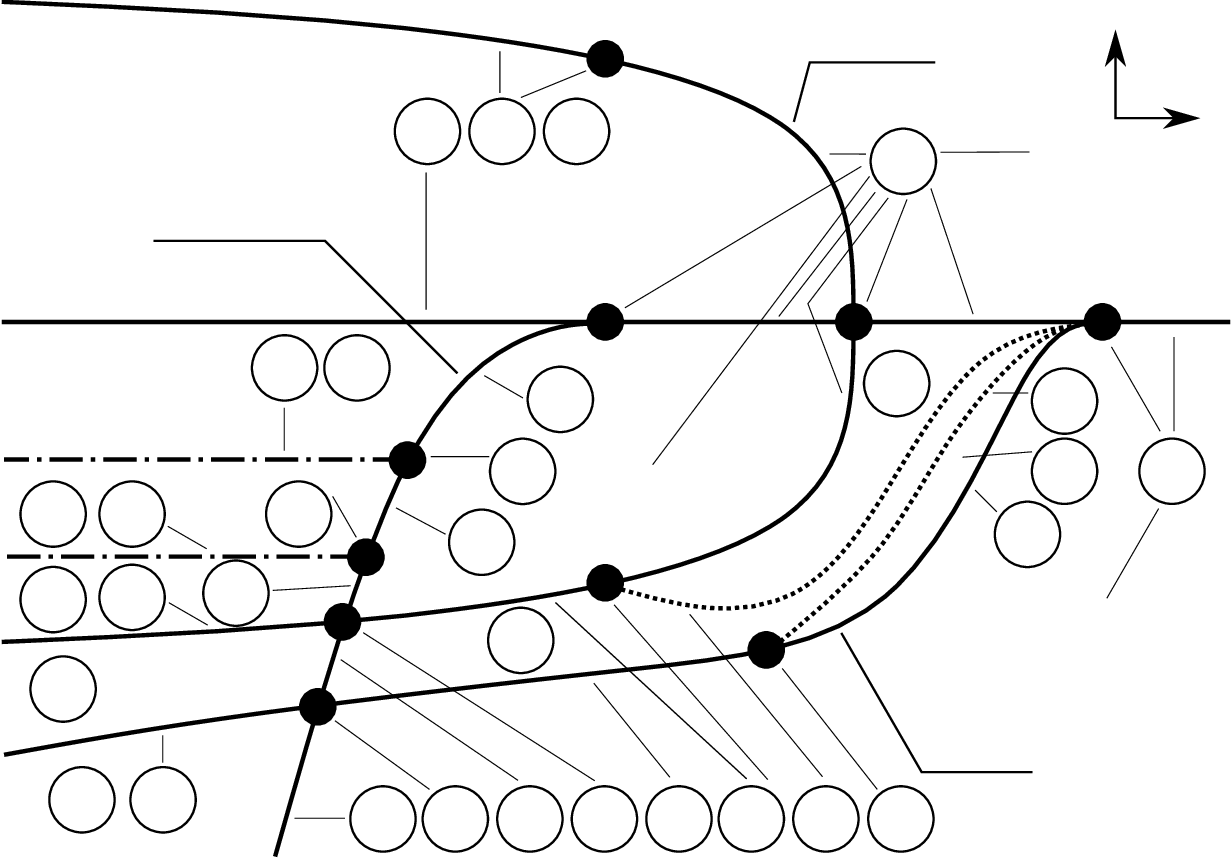} 
			\put(92,65){$\beta$}
			\put(96,61){$\alpha$}
			\put(93,44.5){$\beta=0$}
			\put(66,65.5){$\alpha+\beta^2=0$}
			\put(76,8){$\beta_+(\alpha)$}
			\put(14,51){$\beta_-(\alpha)$}
			\put(72.5,55.5){$1$}
			\put(40,58){$2$}
			\put(46,58){$3$}
			\put(34,58){$4$}
			\put(28,39){$5$}
			\put(22,39){$6$}
			\put(3.5,27){$7$}
			\put(10,27){$8$}
			\put(3.5,20){$9$}
			\put(9,20){$10$}
			\put(3.5,12.5){$11$}
			\put(11.5,3.5){$12$}
			\put(5,3.5){$13$}
			\put(44,36){$14$}
			\put(41,30.5){$15$}
			\put(37.5,24.5){$16$}
			\put(22.5,27){$17$}
			\put(17.5,20.5){$18$}
			\put(47.5,2){$19$}
			\put(41.5,2){$20$}
			\put(35.5,2){$21$}
			\put(29.5,2){$22$}
			\put(93.5,30.5){$23$}
			\put(82,25){$24$}
			\put(71.5,2){$25$}
			\put(53.5,2){$26$}
			\put(85,30.5){$27$}
			\put(85,36){$28$}
			\put(40.75,16.75){$29$}
			\put(65.5,2){$30$}
			\put(71.25,37.25){$31$}
			\put(59.5,2){$32$}
		\end{overpic}
	\end{center}	
	\caption{Bifurcation diagram of $X_{23}$ with $a=1$. We observe that it may be an intersection between the dotted lines defined by phase portraits $28$ and $30$. This is an open problem.}\label{23a1}
\end{figure}

\begin{figure}[ht]	
	\begin{center}
		\begin{minipage}{3cm}		
			\begin{center}
				\includegraphics[width=3cm]{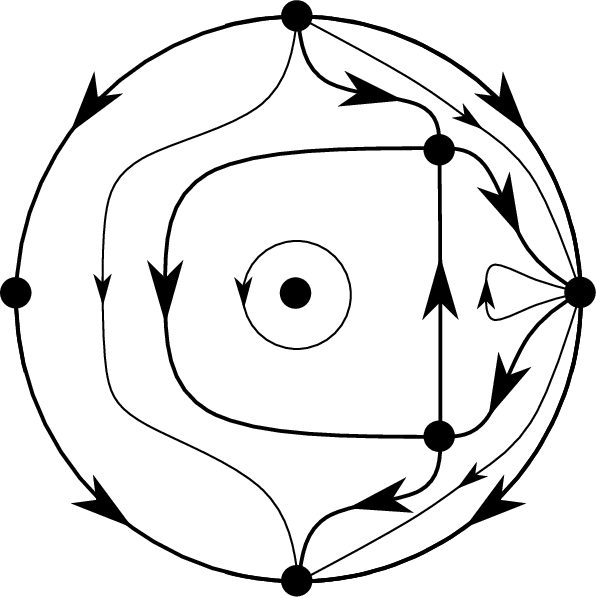}
				
				$1$
			\end{center}		
		\end{minipage}
		\begin{minipage}{3cm}		
			\begin{center}
				\includegraphics[width=3cm]{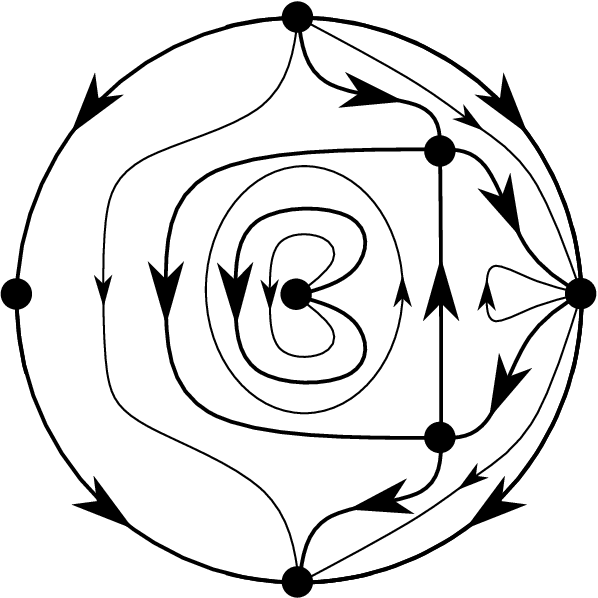}
				
				$2$
			\end{center}		
		\end{minipage}
		\begin{minipage}{3cm}		
			\begin{center}
				\includegraphics[width=3cm]{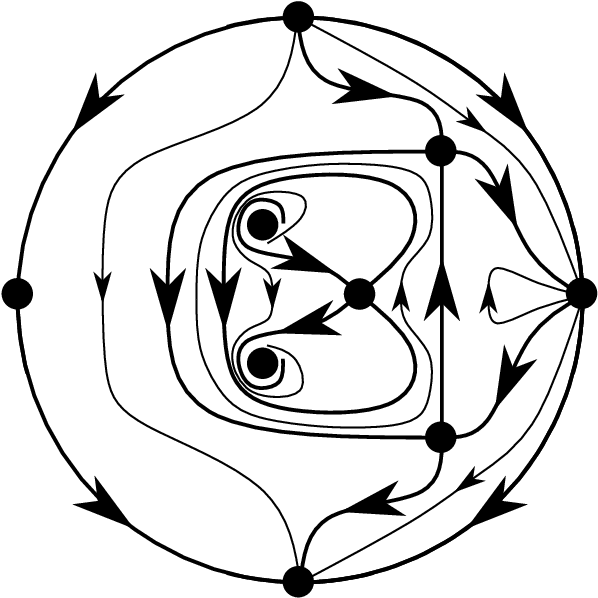}
				
				$3$
			\end{center}
		\end{minipage}
		\begin{minipage}{3cm}	
			\begin{center}
				\includegraphics[width=3cm]{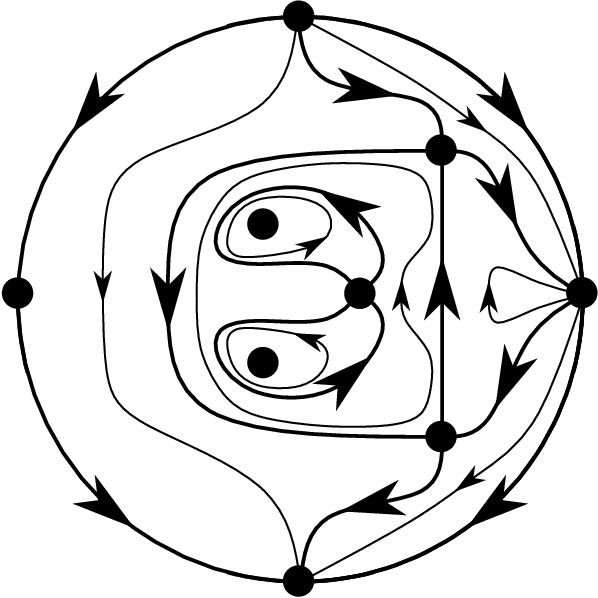}
				
				$4$
			\end{center}
		\end{minipage}
	\end{center}
	\vspace{0.2cm}
	\begin{center}
		\begin{minipage}{3cm}	
			\begin{center}
				\includegraphics[width=3cm]{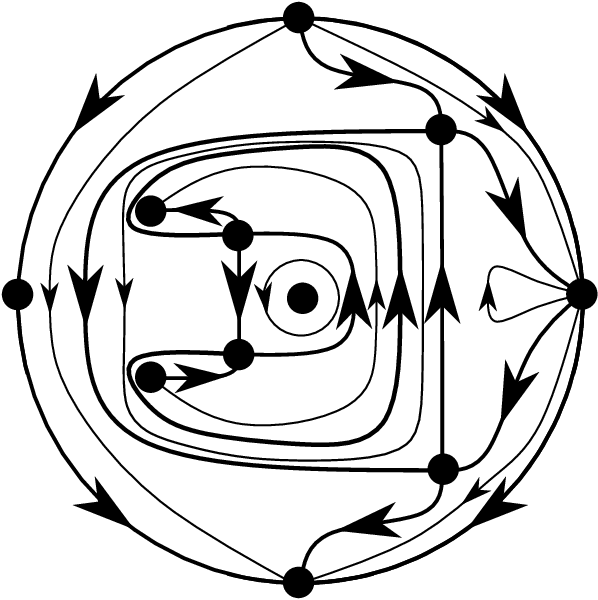}
				
				$5$
			\end{center}		
		\end{minipage}
		\begin{minipage}{3cm}		
			\begin{center}
				\includegraphics[width=3cm]{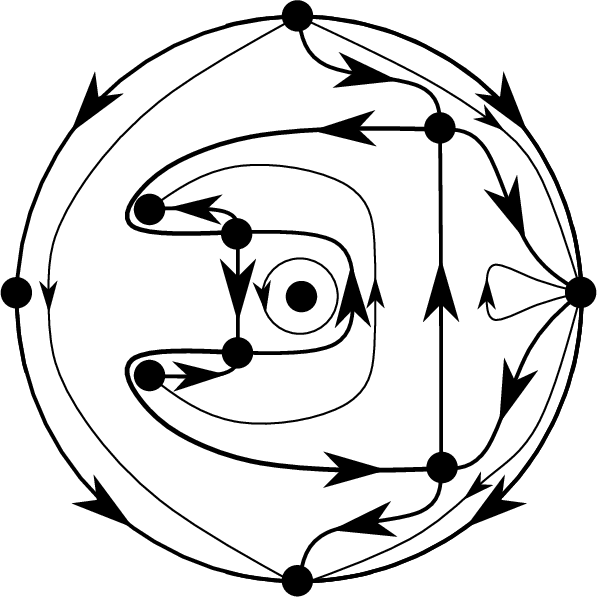}
				
				$6$
			\end{center}		
		\end{minipage}
		\begin{minipage}{3cm}	
			\begin{center}
				\includegraphics[width=3cm]{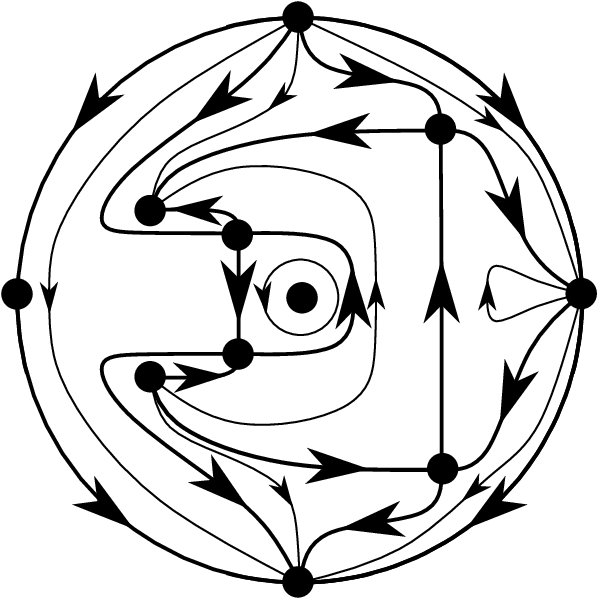}
				
				$7$
			\end{center}		
		\end{minipage}
		\begin{minipage}{3cm}			
			\begin{center}
				\includegraphics[width=3cm]{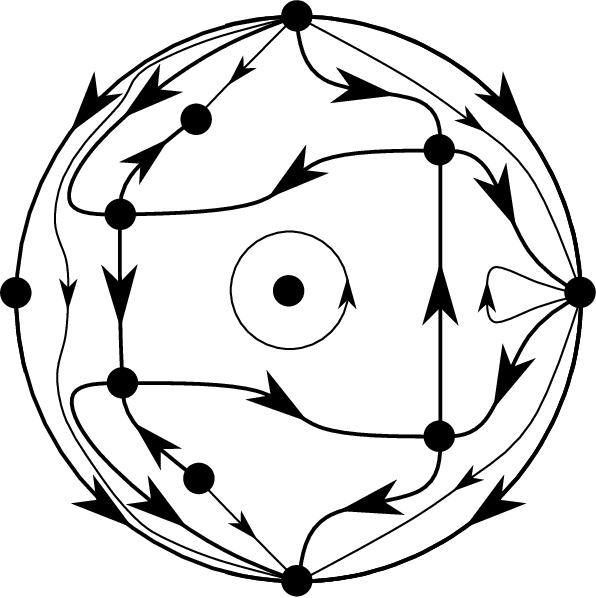}
				
				$8$
			\end{center}
		\end{minipage}
	\end{center}
	\vspace{0.2cm}
	\begin{center}	
		\begin{minipage}{3cm}	
			\begin{center}
				\includegraphics[width=3cm]{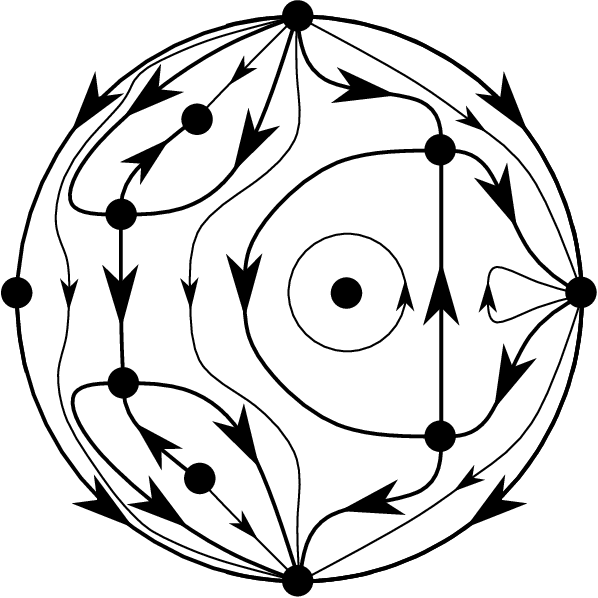}
				
				$9$
			\end{center}		
		\end{minipage}
		\begin{minipage}{3cm}			
			\begin{center}
				\includegraphics[width=3cm]{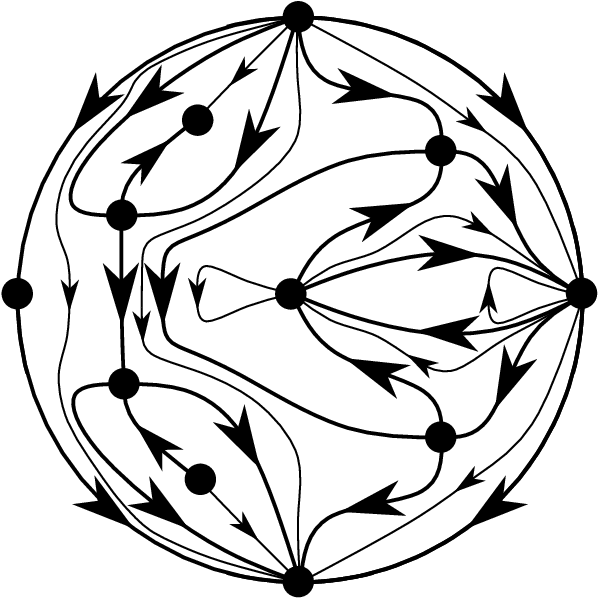}
				
				$10$
			\end{center}		
		\end{minipage}
		\begin{minipage}{3cm}			
			\begin{center}
				\includegraphics[width=3cm]{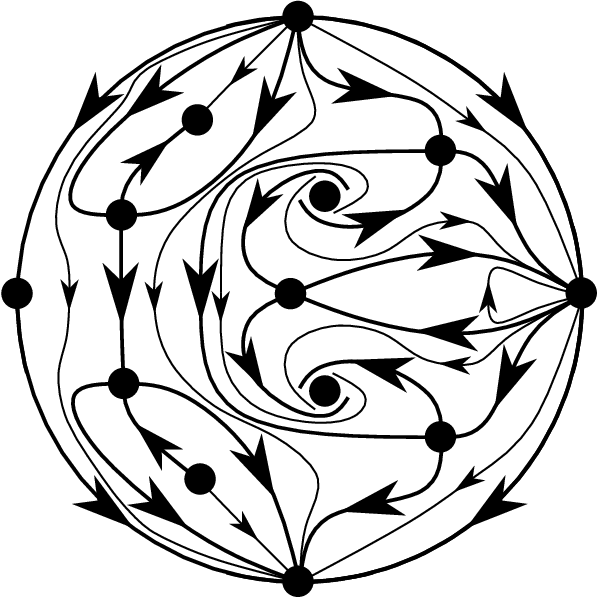}
				
				$11$
			\end{center}		
		\end{minipage}
		\begin{minipage}{3cm}			
			\begin{center}
				\includegraphics[width=3cm]{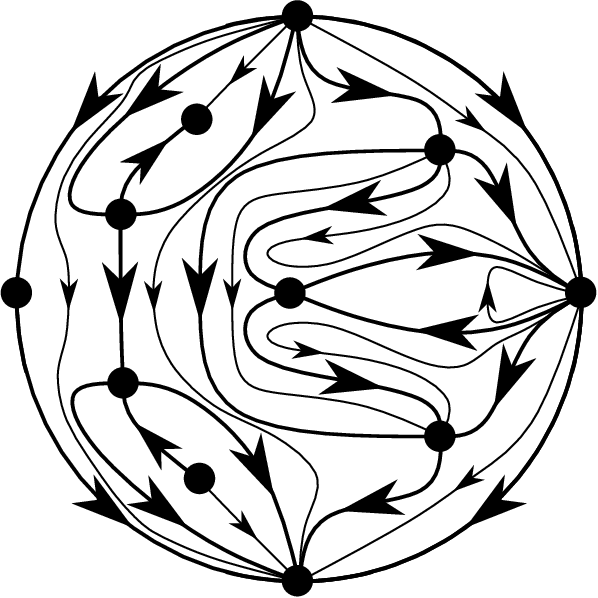}
				
				$12$
			\end{center}
		\end{minipage}
	\end{center}
	\vspace{0.2cm}
	\begin{center}
		\begin{minipage}{3cm}			
			\begin{center}
				\includegraphics[width=3cm]{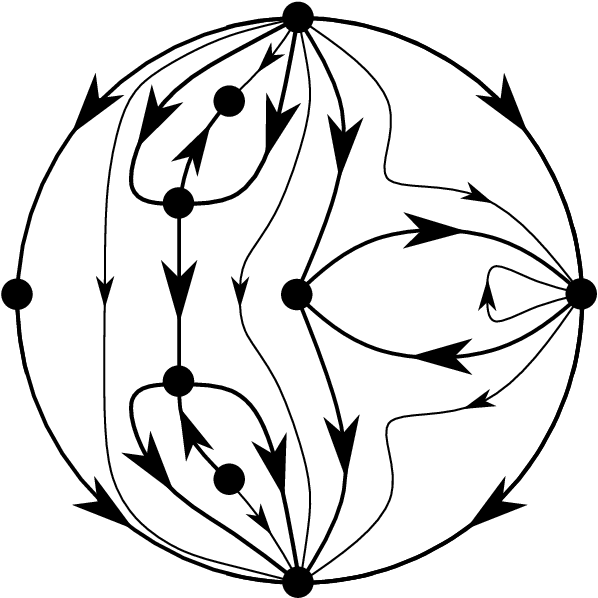}
				
				$13$
			\end{center}
		\end{minipage}
		\begin{minipage}{3cm}			
			\begin{center}
				\includegraphics[width=3cm]{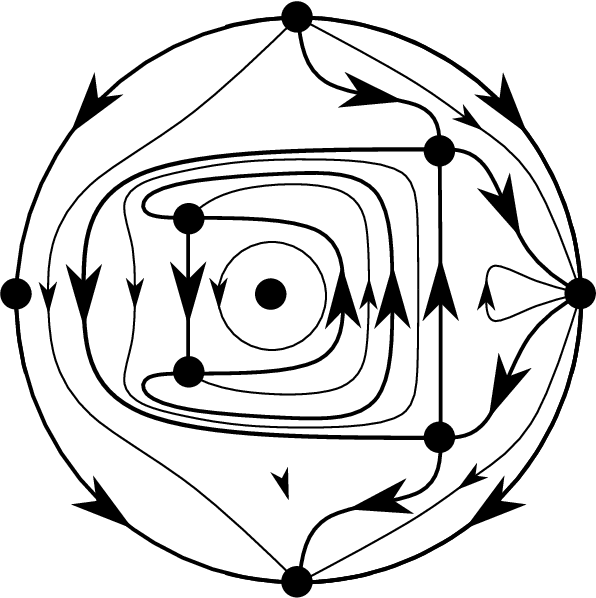}
				
				$14$
			\end{center}		
		\end{minipage}
		\begin{minipage}{3cm}		
			\begin{center}
				\includegraphics[width=3cm]{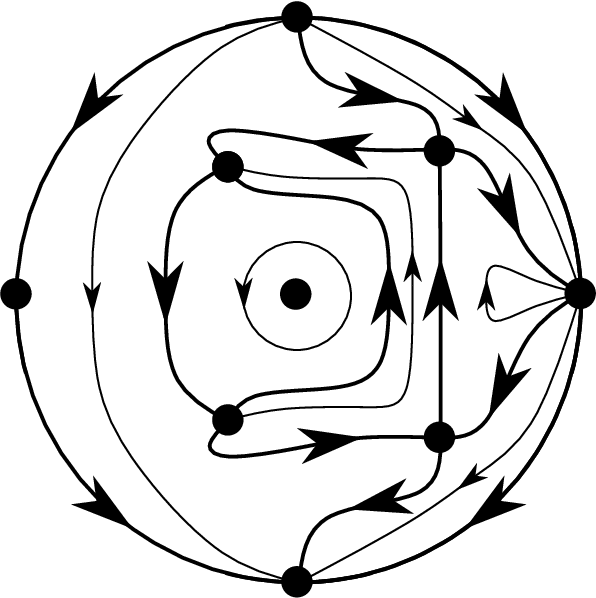}
				
				$15$
			\end{center}		
		\end{minipage}
		\begin{minipage}{3cm}	
			\begin{center}
				\includegraphics[width=3cm]{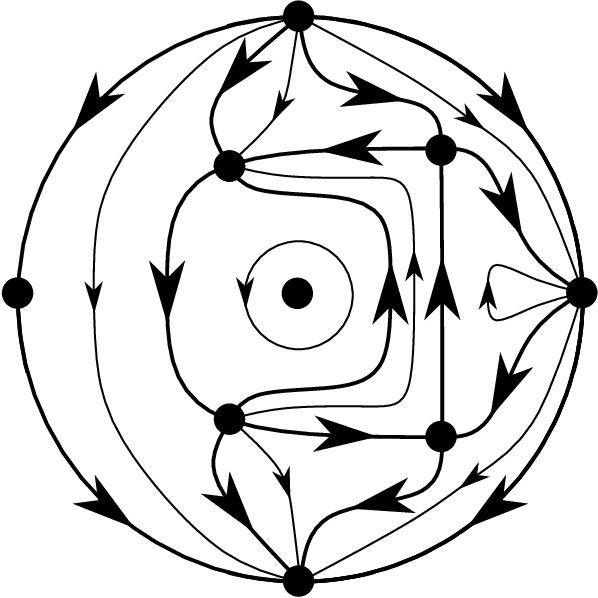}
				
				$16$
			\end{center}		
		\end{minipage}
	\end{center}
	\vspace{0.2cm}
	\begin{center}
		\begin{minipage}{3cm}	
			\begin{center}
				\includegraphics[width=3cm]{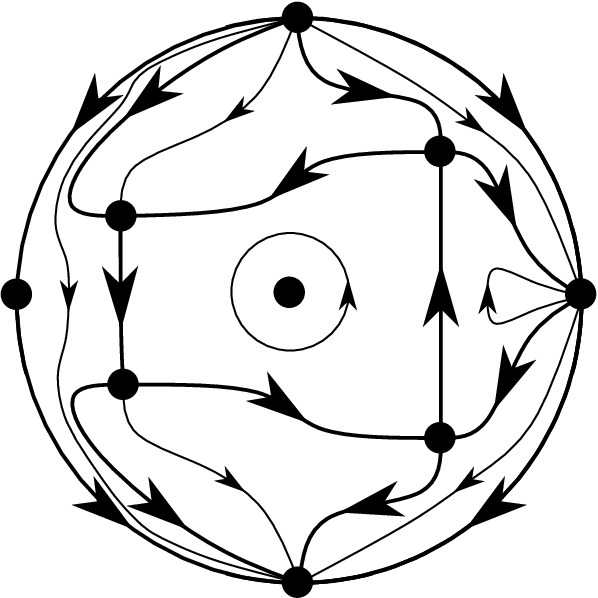}
				
				$17$
			\end{center}
		\end{minipage}
		\begin{minipage}{3cm}			
			\begin{center}
				\includegraphics[width=3cm]{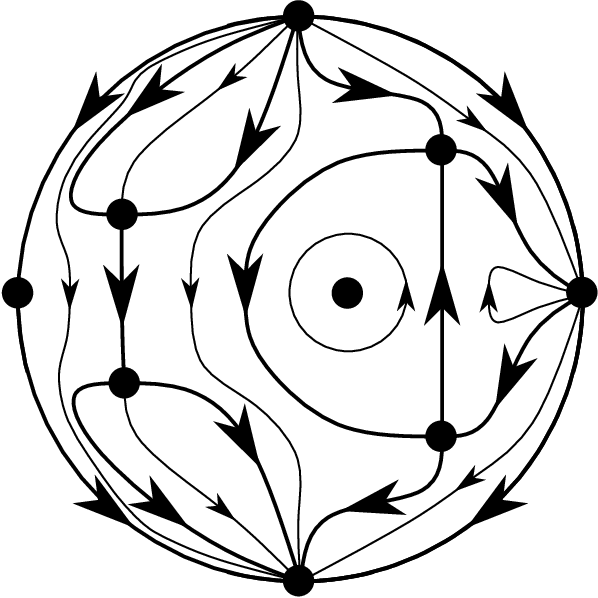}
				
				$18$
			\end{center}		
		\end{minipage}
		\begin{minipage}{3cm}			
			\begin{center}
				\includegraphics[width=3cm]{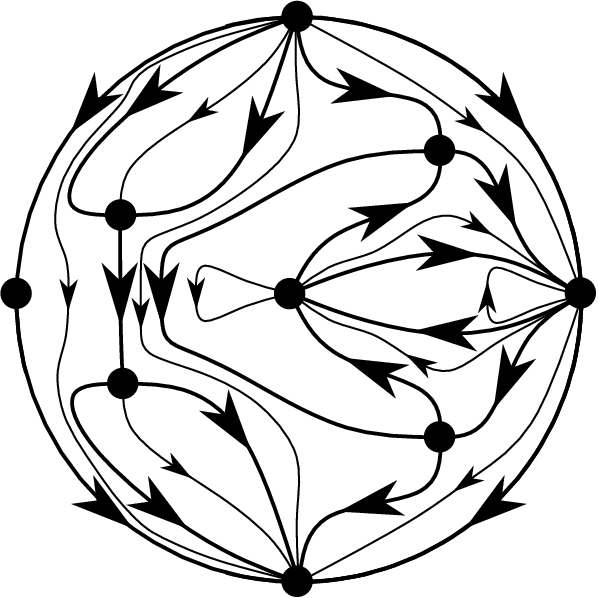}
				
				$19$
			\end{center}		
		\end{minipage}
		\begin{minipage}{3cm}			
			\begin{center}
				\includegraphics[width=3cm]{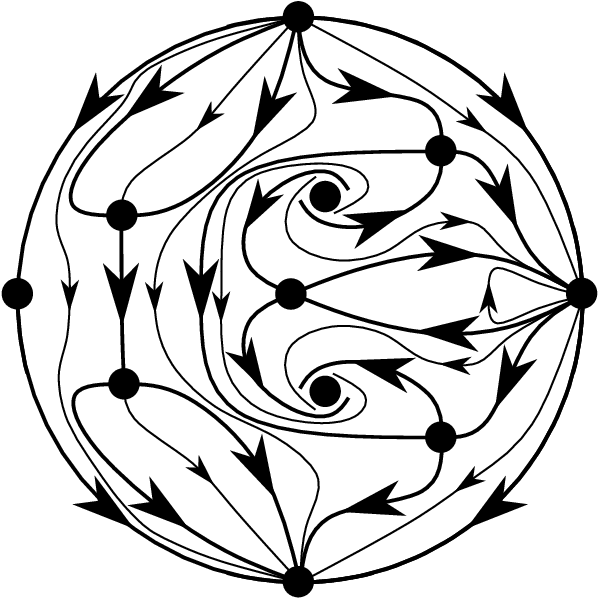}
				
				$20$
			\end{center}	
		\end{minipage}	
	\end{center}
	\caption{Phase portraits of $X_{23a}$ with $a=1$.}\label{23a2}
\end{figure}

\begin{figure}[ht]
	\begin{center}
		\begin{minipage}{3cm}
			\begin{center}
				\includegraphics[width=3cm]{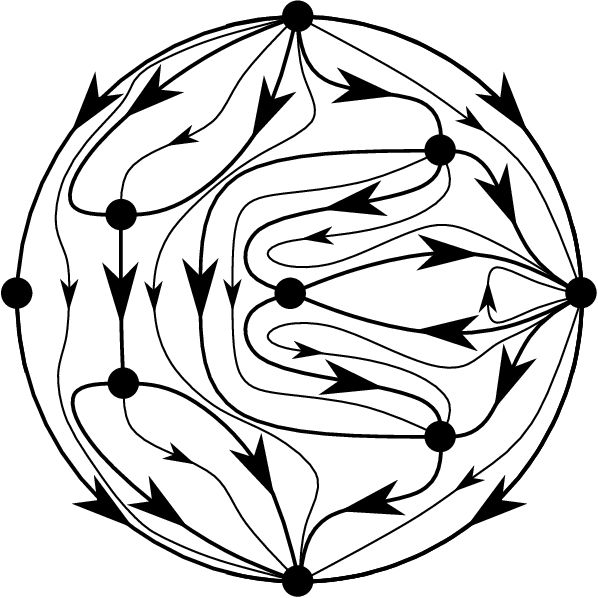}
				
				$21$
			\end{center}
		\end{minipage}
		\begin{minipage}{3cm}
			\begin{center}
				\includegraphics[width=3cm]{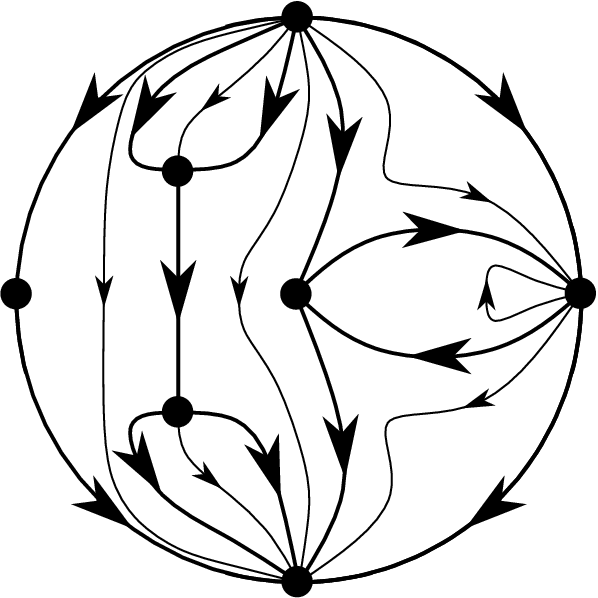}
				
				$22$
			\end{center}
		\end{minipage}
		\begin{minipage}{3cm}		
			\begin{center}
				\includegraphics[width=3cm]{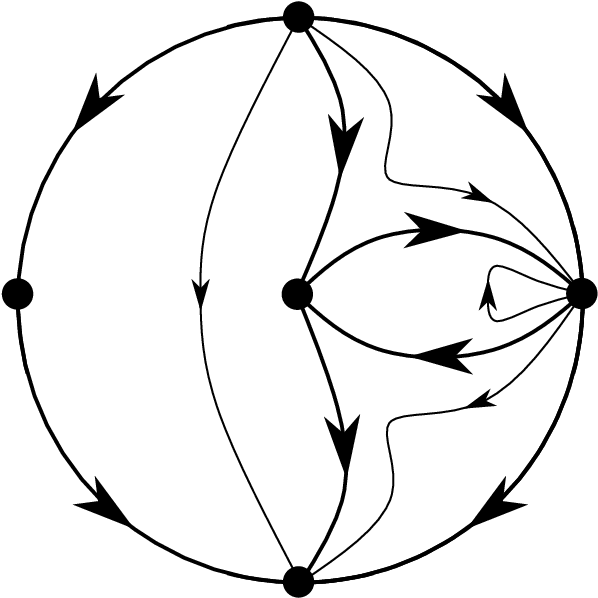}
				
				$23$
			\end{center}		
		\end{minipage}
		\begin{minipage}{3cm}
			\begin{center}
				\includegraphics[width=3cm]{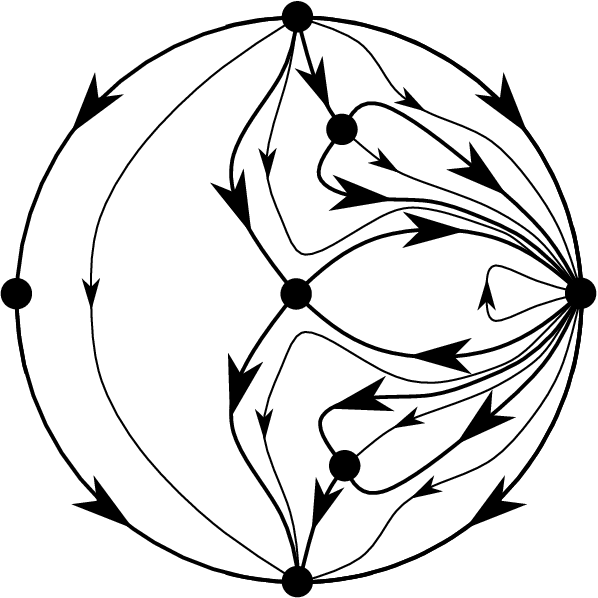}
				
				$24$
			\end{center}
		\end{minipage}
	\end{center}
	\vspace{0.2cm}
	\begin{center}
		\begin{minipage}{3cm}	
			\begin{center}
				\includegraphics[width=3cm]{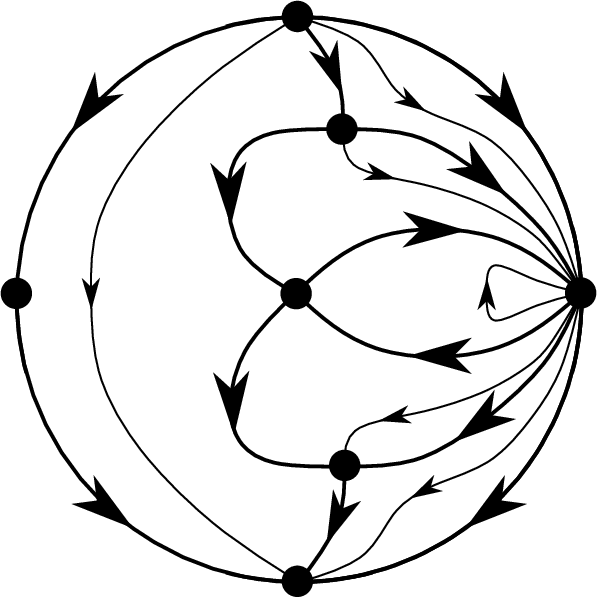}
				
				$25$
			\end{center}
		\end{minipage}
		\begin{minipage}{3cm}	
			\begin{center}
				\includegraphics[width=3cm]{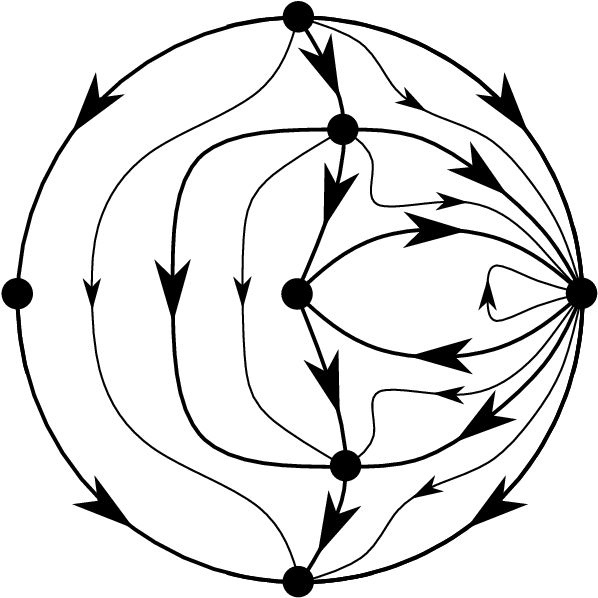}
				
				$26$
			\end{center}		
		\end{minipage}
		\begin{minipage}{3cm}	
			\begin{center}
				\includegraphics[width=3cm]{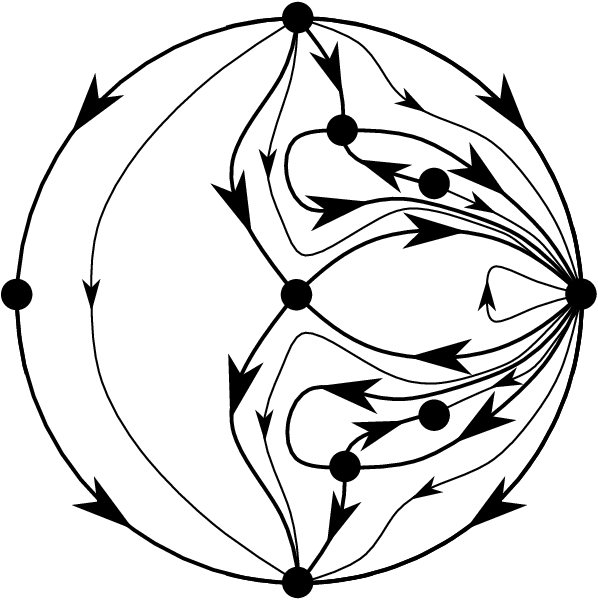}
				
				$27$
			\end{center}			
		\end{minipage}
		\begin{minipage}{3cm}		
			\begin{center}
				\includegraphics[width=3cm]{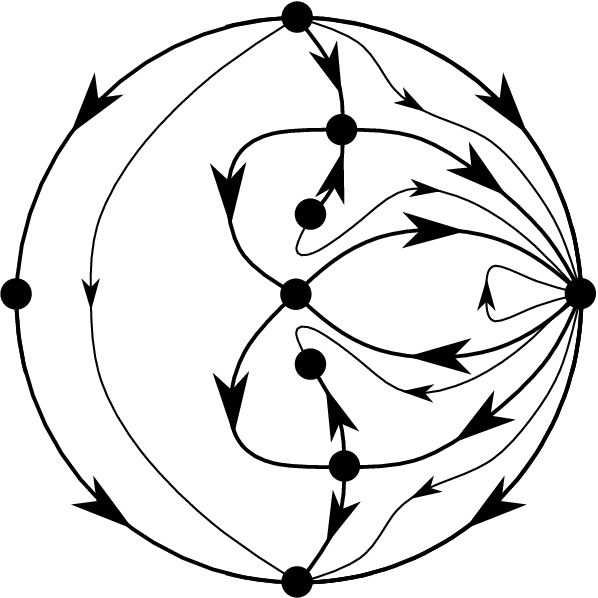}
				
				$28$
			\end{center}		
		\end{minipage}
	\end{center}
	\vspace{0.2cm}
	\begin{center}
		\begin{minipage}{3cm} 	
			\begin{center}
				\includegraphics[width=3cm]{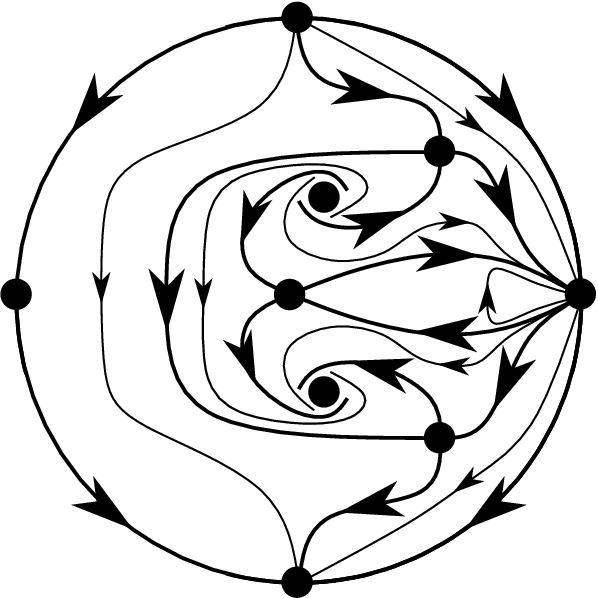}
				
				$29$
			\end{center}		
		\end{minipage}
		\begin{minipage}{3cm}		
			\begin{center}
				\includegraphics[width=3cm]{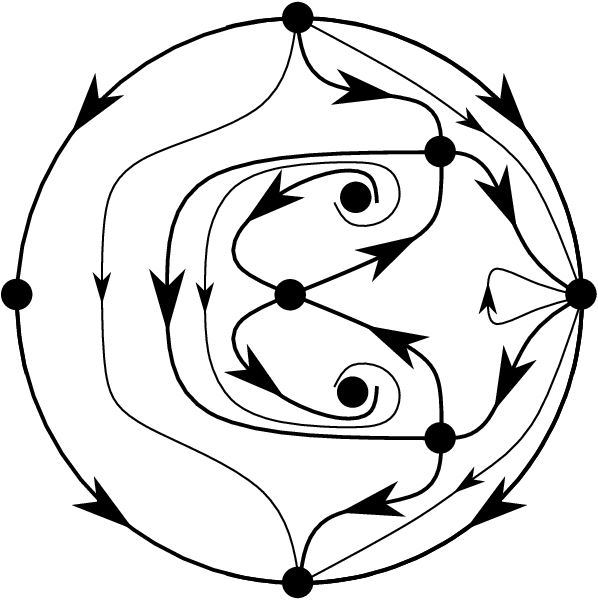}
				
				$30$
			\end{center}
		\end{minipage}
		\begin{minipage}{3cm}		
			\begin{center}
				\includegraphics[width=3cm]{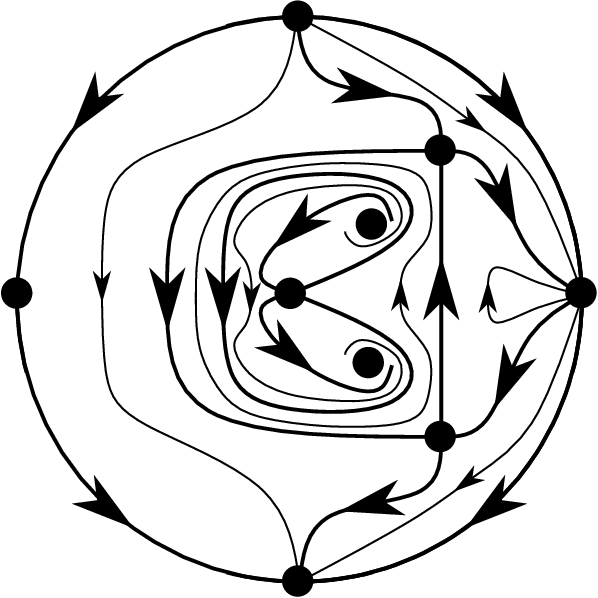}
				
				$31$
			\end{center}
		\end{minipage}
		\begin{minipage}{3cm}	
			\begin{center}
				\includegraphics[width=3cm]{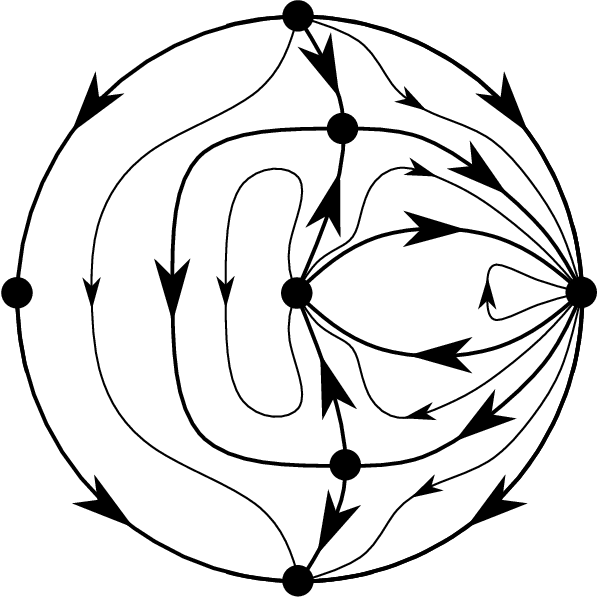}
				
				$32$
			\end{center}		
		\end{minipage}
	\end{center}
	\caption{Phase portraits of $X_{23a}$ with $a=1$.}\label{23a3}
\end{figure}

\begin{figure}[ht]	
	\begin{center}
		\begin{overpic}[width=3cm]{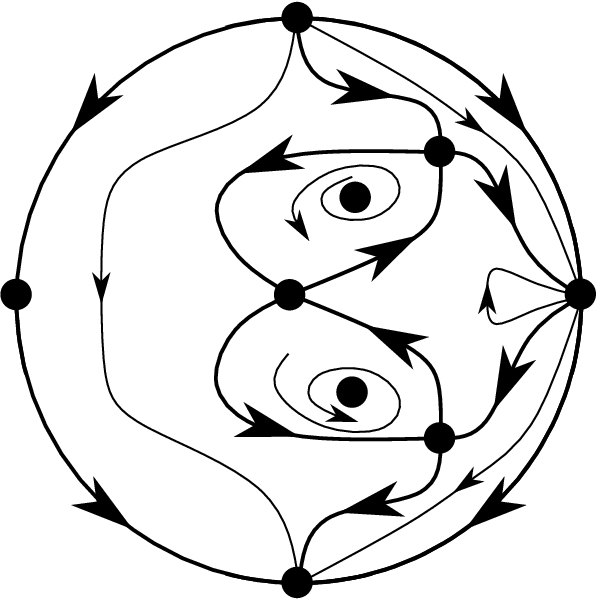} 
		\end{overpic}
	\end{center}	
	\caption{We do not know if the dotted lines defined by phase portraits $28$ and $30$ of Figu\-re~\ref{23a1} intersect each other. But if it happens, then this is the phase portrait on the intersection.}\label{23a4}
\end{figure}

\begin{figure}[ht]	
	\begin{center}
		\begin{overpic}[width=12cm]{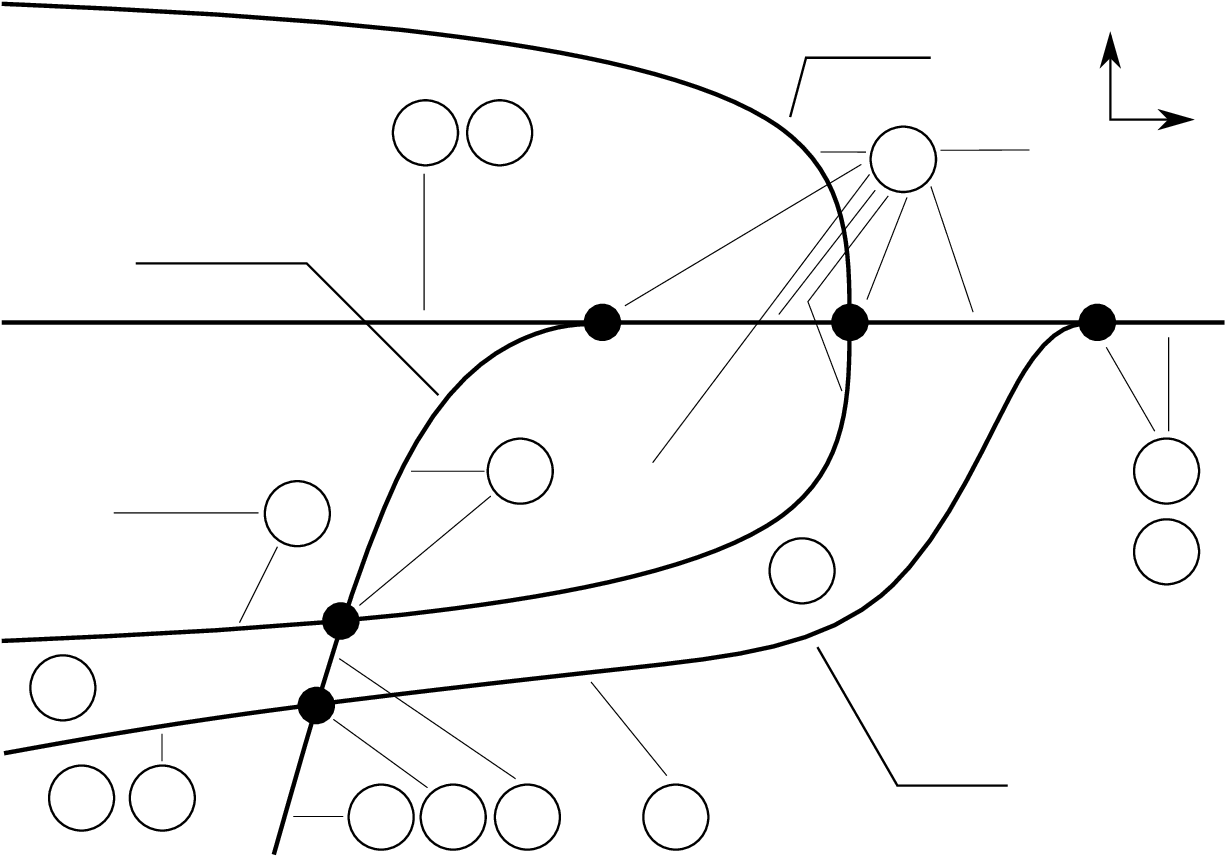} 
			\put(92,65){$\beta$}
			\put(96,61){$\alpha$}
			\put(92,44.5){$\beta=0$}
			\put(66,66){$\alpha+\beta^2=0$}
			\put(75,7){$\beta_+(\alpha)$}
			\put(13,49.5){$\beta_-(\alpha)$}
			\put(73,55.5){$1$}
			\put(94.5,30.25){$2$}
			\put(94.5,23.5){$3$}
			\put(40,57.75){$4$}
			\put(34,57.75){$5$}
			\put(23.5,26.75){$6$}
			\put(4.5,12.75){$7$}
			\put(12.5,3.5){$8$}
			\put(5.75,3.5){$9$}
			\put(41,30.25){$10$}
			\put(41.5,2){$11$}
			\put(35.5,2){$12$}
			\put(29.5,2){$13$}
			\put(63.75,22){$14$}
			\put(53.5,2){$15$}
		\end{overpic}
	\end{center}	
	\caption{Bifurcation diagram of $X_{23b}$ with $a=-1$.}\label{23b1}
\end{figure}

\begin{figure}[ht]	
	\begin{center}
		\begin{minipage}{3cm}	
			\begin{center}
				\includegraphics[width=3cm]{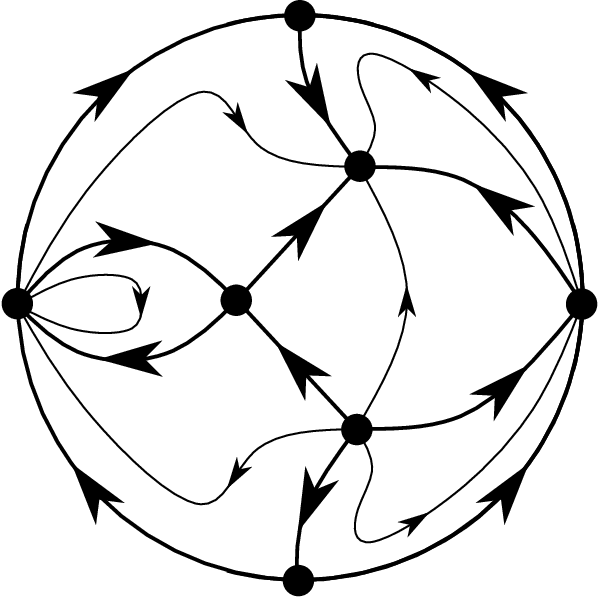}
				
				$1$
			\end{center}		
		\end{minipage}
		\begin{minipage}{3cm} 		
			\begin{center}
				\includegraphics[width=3cm]{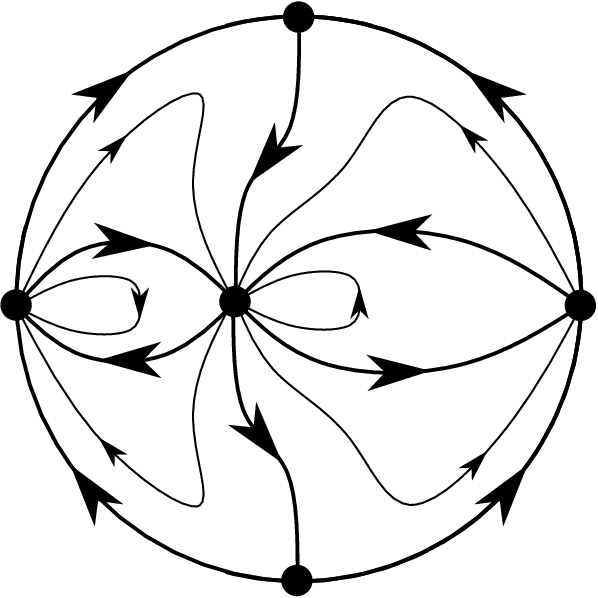}
				
				$2$
			\end{center}		
		\end{minipage}
		\begin{minipage}{3cm} 		
			\begin{center}
				\includegraphics[width=3cm]{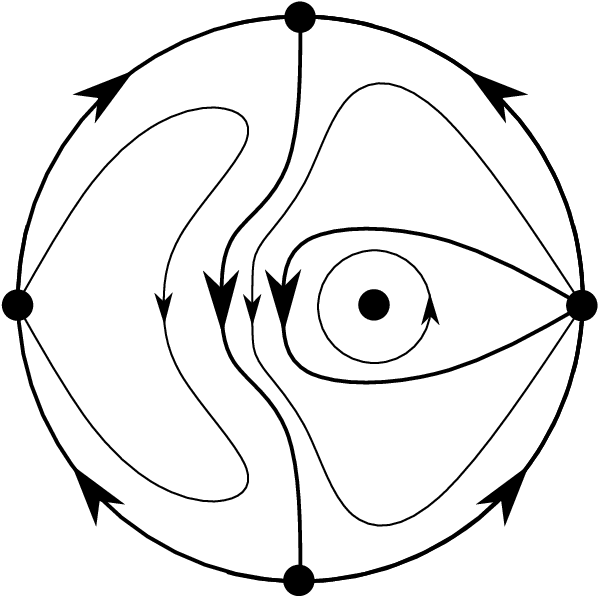}
				
				$3$
			\end{center}
		\end{minipage}
		\begin{minipage}{3cm} 			
			\begin{center}
				\includegraphics[width=3cm]{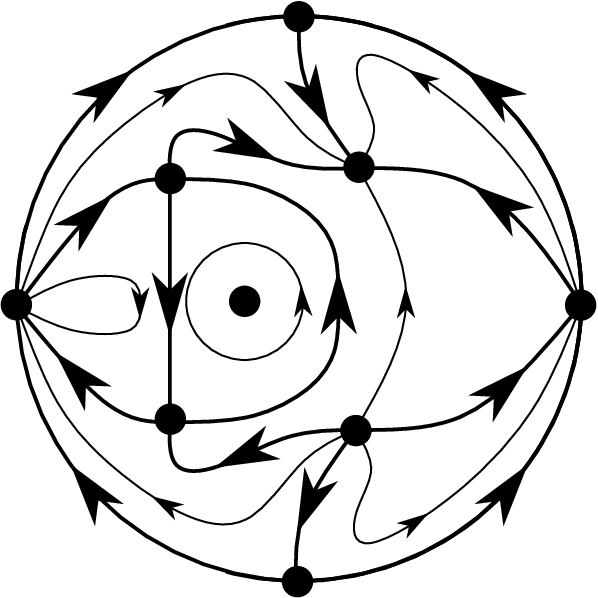}
				
				$4$
			\end{center}
		\end{minipage}
	\end{center}
	\vspace{0.2cm}
	\begin{center}
		\begin{minipage}{3cm} 		
			\begin{center}
				\includegraphics[width=3cm]{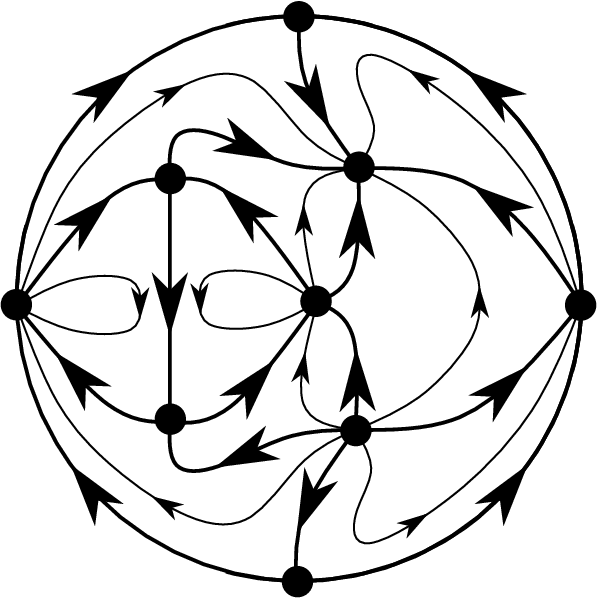}
				
				$5$
			\end{center}		
		\end{minipage}
		\begin{minipage}{3cm} 	
			\begin{center}
				\includegraphics[width=3cm]{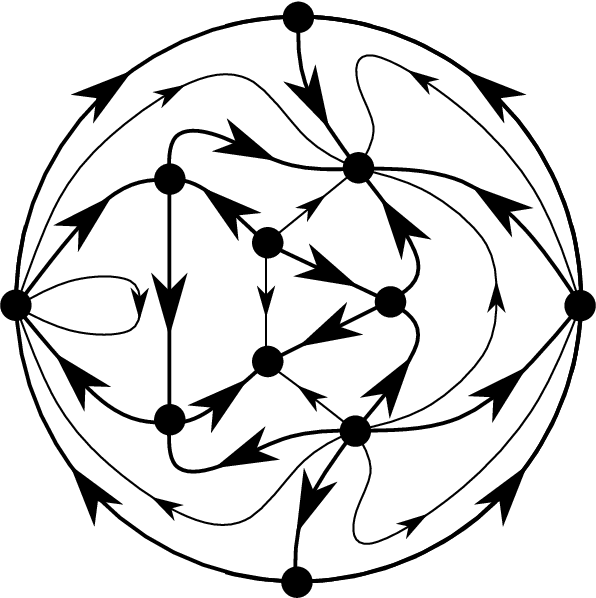}
				
				$6$
			\end{center}		
		\end{minipage}
		\begin{minipage}{3cm} 	
			\begin{center}
				\includegraphics[width=3cm]{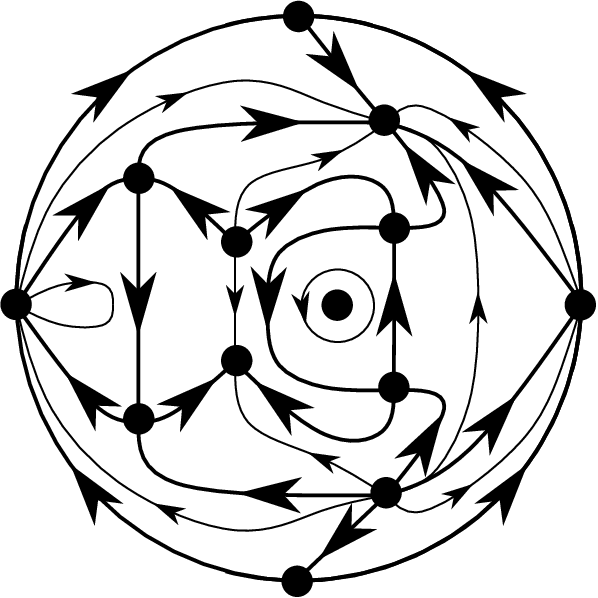}
				
				$7$
			\end{center}		
		\end{minipage}
		\begin{minipage}{3cm} 		
			\begin{center}
				\includegraphics[width=3cm]{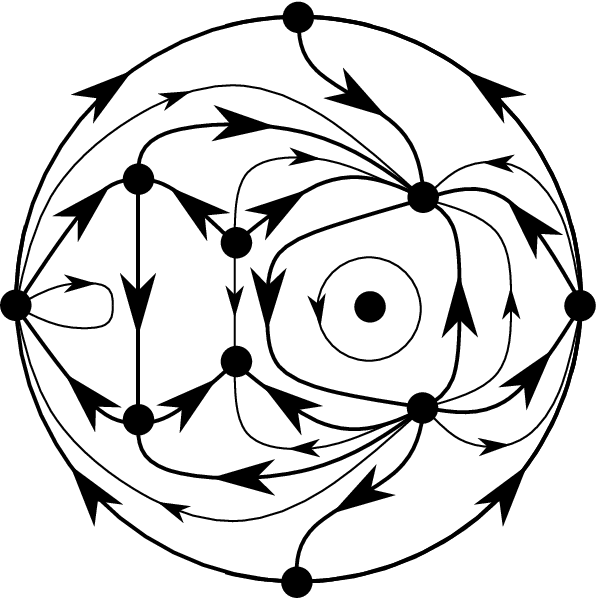}
				
				$8$
			\end{center}
		\end{minipage}
	\end{center}
	\vspace{0.2cm}
	\begin{center}
		\begin{minipage}{3cm} 			
			\begin{center}
				\includegraphics[width=3cm]{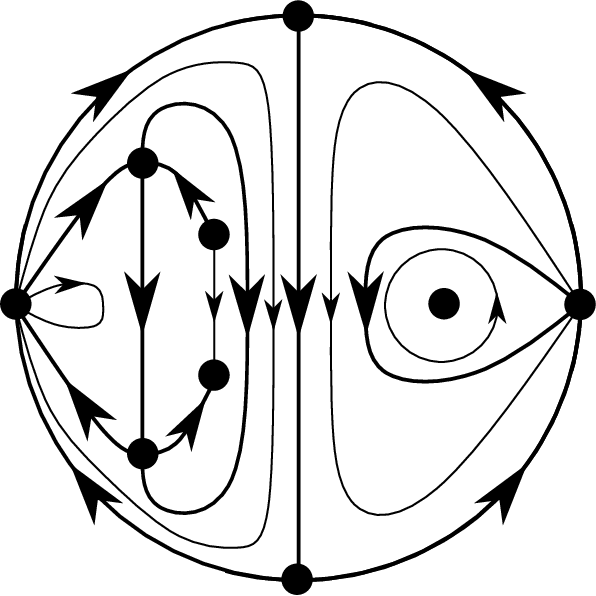}
				
				$9$
			\end{center}		
		\end{minipage}
		\begin{minipage}{3cm} 
			\begin{center}	
				\includegraphics[width=3cm]{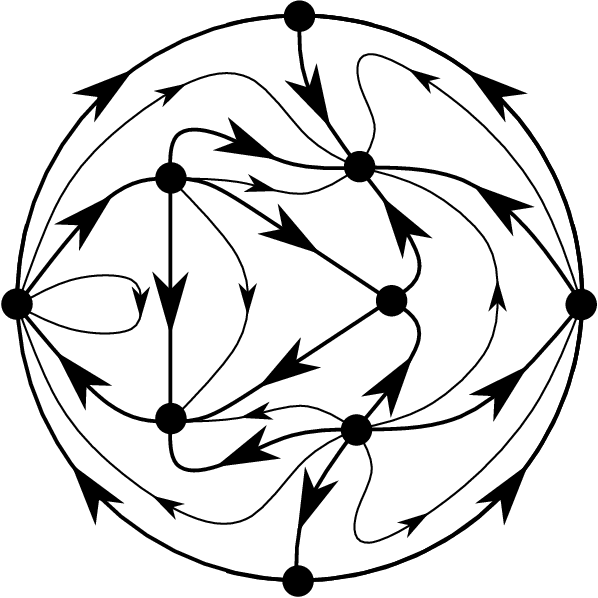}
				
				$10$
			\end{center}		
		\end{minipage}
		\begin{minipage}{3cm} 			
			\begin{center}
				\includegraphics[width=3cm]{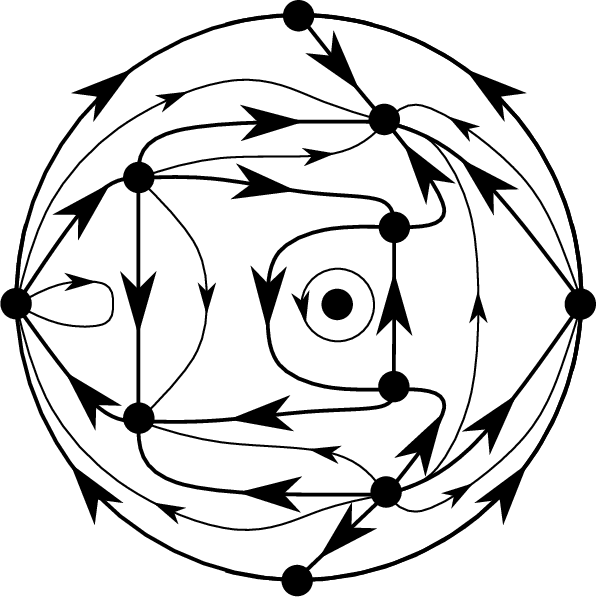}
				
				$11$
			\end{center}		
		\end{minipage}
		\begin{minipage}{3cm} 			
			\begin{center}
				\includegraphics[width=3cm]{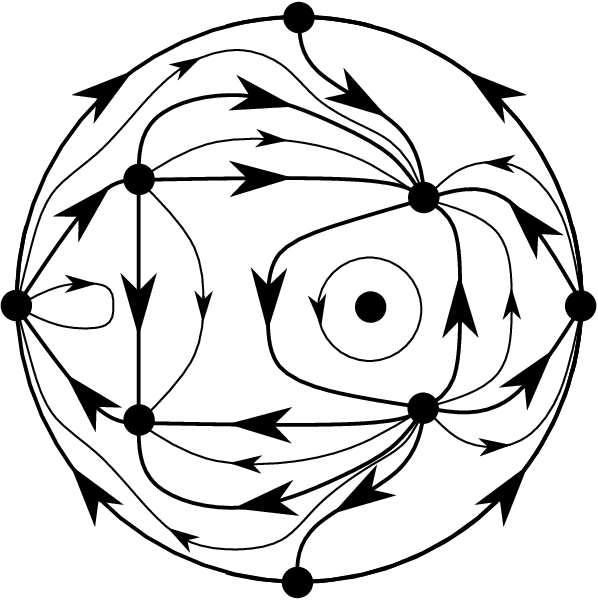}
				
				$12$
			\end{center}
		\end{minipage}
	\end{center}
	\vspace{0.2cm}
	\begin{center}
		\begin{minipage}{3cm} 			
			\begin{center}
				\includegraphics[width=3cm]{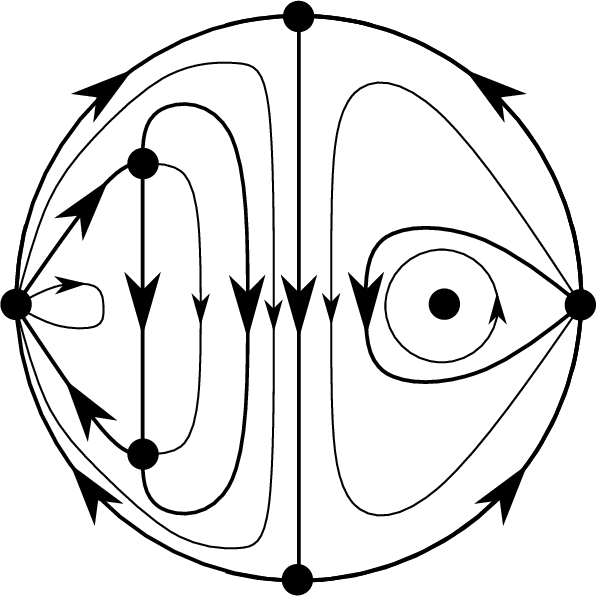}
				
				$13$
			\end{center}
		\end{minipage}
		\begin{minipage}{3cm} 		
			\begin{center}
				\includegraphics[width=3cm]{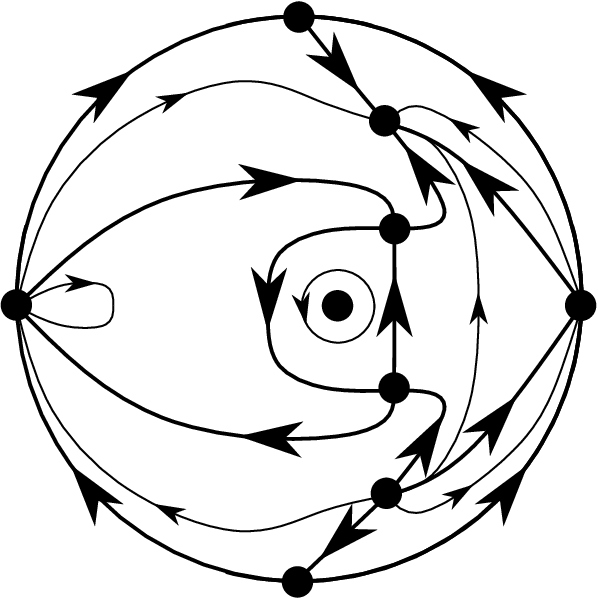}
				
				$14$
			\end{center}		
		\end{minipage}
		\begin{minipage}{3cm} 	
			\begin{center}
				\includegraphics[width=3cm]{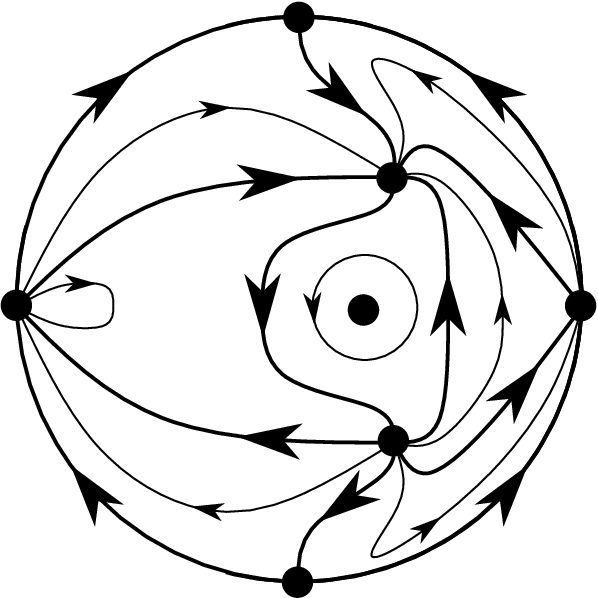}
				
				$15$
			\end{center}		
		\end{minipage}
	\end{center}
	\caption{Phase portraits of $X_{23}$ with $a=-1$ and $(x,y)\mapsto(-x,y)$.}\label{23b2}
\end{figure}

\end{document}